\date{}
\numberwithin{equation}{section}
\def\cD{{\mathcal D}}
\renewcommand{\epsilon}{\varepsilon}
\def\tG{\tilde{G}}
\def\hW{\wh{W}}
\def\bd{{\bf d}}
\def\wT{\widehat{T}}
\def\wK{\widehat{K}}
\def\wW{\widehat{W}}
\newcommand{\beq}[1]{\begin{equation}\label{#1}}
\newcommand{\eeq}{\end{equation}}
\newcommand{\bem}[1]{\begin{multline}\label{#1}}
\newcommand{\card}[1]{\left| #1 \right|}
\newcounter{rot}
\def\leb{\leq_b}
\def\a{\alpha} \def\b{\beta} \def\d{\delta} \def\D{\Delta}
\def\e{\epsilon} \def\f{\phi} \def\F{{\Phi}}  \def\g{\gamma}
\def\G{\Gamma}  \def\k{\kappa}
\def\z{\zeta} \def\th{\theta}    \def\l{\lambda}
\def\La{\Lambda} \def\m{\mu} \def\n{\nu} \def\p{\pi}
\def\r{\rho}  \def\s{\sigma} 
\def\t{\tau} \def\om{\omega}
\def\cY{{\cal Y}}
\newtheorem{maintheorem}{Theorem}
\newtheorem{maincoro}[maintheorem]{Corollary}
\newtheorem{theorem}{Theorem}[section]
\newtheorem{lemma}[theorem]{Lemma}
\newtheorem{Remark}[theorem]{Remark}
\newtheorem{definition}[theorem]{Definition}
\newtheorem{conjecture}[theorem]{Conjecture}
\newtheorem*{conjecture*}{Conjecture}
\def\cW{{\cal W}}
\def\cX{{\cal X}}
\def\cG{{\cal G}}
\newcommand{\wh}[1]{\widehat{#1}}
\newcommand{\rdup}[1]{{\left\lceil #1 \right\rceil }}
\newcommand{\rdown}[1]{{\left\lfloor #1 \right\rfloor}}
\newcommand{\brac}[1]{\left(#1\right)}
\newcommand{\bfrac}[2]{\left(\frac{#1}{#2}\right)}
\def\cEm{{\cal E}_{\max}}
\newcommand{\CW}[1]{\cW^{\,#1}}
\newcommand{\set}[1]{\left\{#1\right\}}
\def\E{\mathbb{E}}
\def\Pr{\mathbb{P}}
\def\wPr{\widehat{\mathbb{P}}}
\newcommand{\ignore}[1]{}
\newcommand{\cA}{{\cal A}}
\def\Gd{G_{\textbf{d}}}
\def\Kd{K_{\textbf{d}}}
\def\wF{\widehat{F}}
\def\bd{\textbf{d}}
\newcommand{\GC}{{\mathcal{C}_1}} 
\newcommand{\TC}[1][\mathcal{C}_1]{#1^{(2)}} 
\newcommand{\tTC}{G_F} 
\newcommand{\tKer}{K_F} 
\newcommand{\Tm}{T_\textsc{mix}}
\newcommand{\Tc}{T_\textsc{cov}}
\def\am{\ell_{\max}}
\def\bm{\ell_{\min}}
\def\x{v_{f^*}}
\def\tW{\CW{G_0^*\to V_0}}
\def\ttW{\CW{G_0^*\to V_\l}}
\def\sW{\CW{G_0}}
\def\GW{\CW{G_0^*}}
\def\cWa{\CW{\tTC}}
\def\deg{\text{d}}
\def\bq{{\bf q}}
\def\cQ{{\mathcal Q}}
\def\wS{\widehat{S}}
\def\wG{\widehat{G}}
\def\wV{\widehat{V}}
\def\wE{\widehat{E}}
\def\cN{{\cal N}}
\begin{document}
\makeatletter
\title{Cover time of a random graph with a degree sequence II: \\
Allowing vertices of degree two}
\author{Colin Cooper\thanks{Department of  Computer Science, King's College,
University of London, London WC2R 2LS, UK.
Email: {\tt colin.cooper@kcl.ac.uk}}\and Alan
Frieze\thanks{Department of Mathematical Sciences, Carnegie Mellon
University, Pittsburgh PA15213, USA.  Email: {\tt alan@random.math.cmu.edu}.
Research supported in part by
NSF Grant DMS-0502793.}
\and Eyal Lubetzky\thanks{Microsoft Research,
One Microsoft Way, Redmond, WA 98052, USA. Email:
{\tt eyal@microsoft.com}}}
 \maketitle \makeatother
 \vspace{-0.5cm}
\begin{abstract}
We study the cover time of a random graph
chosen uniformly at random from the set of graphs with vertex set
$[n]$ and degree sequence $\textbf{d}=(d_i)_{i=1}^n$.
In a previous work \cite{ACF}, the asymptotic cover time was obtained
under a number of assumptions on $\textbf{d}$, the most
significant being that $d_i\geq 3$ for all $i$.
Here we replace this assumption by $d_i\geq 2$. As a corollary, we establish the asymptotic cover time for the 2-core of the emerging giant component of $\cG(n,p)$.
\end{abstract}

\section{Introduction}\label{intro}
Let $G=(V,E)$ be a connected graph with $n$ vertices and $m$ edges.
For $v\in V$, let $C_v$ be the expected time for a simple random
walk $\cW_v$ on $G$ starting at $v$, to visit every vertex of $G$. The
{\em (vertex) cover time} $\Tc(G)$ of $G$ is defined as $\Tc(G)=\max_{v\in
V}C_v$. 
It is a classic result of Aleliunas, Karp,
Lipton, Lov\'asz and Rackoff~\cite{AKLLR} that $\Tc(G) \le 2m(n-1)$. Feige~\cite{Feige1,Feige2}
showed that the cover time of any connected graph $G$ satisfies
$(1-o(1))n\ln n\leq \Tc(G)\leq (1+o(1))\frac{4}{27}n^3.$
Between these two extremes, the cover time, both
exact and asymptotic,
has been extensively studied for different classes of graphs (see, e.g.,~\cite{Aldous} for an introduction to the topic).


In the context of random graphs, a basic question is to understand the cover time for the giant component $\GC$ of the celebrated Erd\H{o}s-R\'enyi~\cite{ER} random graph model $\cG(n,p)$. Decomposing the giant $\GC$ into the \emph{2-core} $\TC$ (its maximal subgraph of minimum degree 2) and collection of trees decorating $\TC$, much is known about their structure (see, e.g., the characterization theorems in the recent works~\cite{Lub,Lub2}). However, our understanding of the cover time for these remains incomplete.

It is well-known that for $G\sim\cG(n,p=c/n)$ with $c>1$ fixed,
the giant component $\GC$ is roughly of size $x n$ where $x=x(c)$ is the
solution in $(0,1)$ of $x=1-e^{-cx}$. Cooper and Frieze~\cite{CFgiant} showed that in this regime
\begin{align}
\Tc(\GC)\sim  \frac{cx(2-x)}{4(cx-\ln c)} n \ln^2 n
\quad \mbox{ and }\quad
\Tc(\TC)\sim \frac{cx^2}{16(cx-\ln c)} n \ln^2 n
\label{G1Ca-c}
\end{align}
with high probability (w.h.p.), i.e., with probability tending to $1$ as $n\to\infty$. However, analogous results
for $p=(1+\epsilon)/n$ with $\epsilon=o(1)$, $\epsilon^3 n \to\infty$ (the \emph{emerging} giant component) were unavailable.

Barlow \emph{et al.}~\cite{BDNP} showed that when $p=(1+\epsilon)/n$ with $n^{-1/3}\ll \epsilon \ll 1$ (here and in what follows
we let $A_N\ll B_N$ denote $\lim_{N\to\infty}A_N/B_N=0$) the cover time $\Tc(\GC)$ is of order $n \log^2(\e^3 n)$.
With this in mind, substituting $c=1+\epsilon$ with $\epsilon>0$ in the estimates of~\eqref{G1Ca-c}, and noting that the aforementioned $x(c)$ becomes $2\e+O(\e^2)$, shows that for small \emph{fixed} $\epsilon>0$, w.h.p.
\begin{align}
\Tc(\GC)=(1+O(\e))n\ln^2(\epsilon^3n)
\quad \mbox{ and }\quad
\Tc(\TC) = \frac{\e+O(\e^2)}{4}n\ln^2(\epsilon^3n)\,,
\label{G1Ca-eps}
\end{align}
and one may expect these results to hold throughout the emerging giant regime of $n^{-1/3} \ll \epsilon \ll 1$.

A natural step towards this goal is to exploit the well-known characterizations of $\GC$, its 2-core and its \emph{kernel}: as mentioned above, by stripping the giant component of its attached trees one arrives at the 2-core $\TC$. By further shrinking every induced path in $\TC$ into a single edge one arrives at the kernel $K$ (see \S\ref{config} for more details). It was shown by {\L}uczak~\cite{Luczak} that the kernel of the emerging giant component is a random multi-graph on a certain degree sequence, and so, potentially, the cover times of $K$, $\TC$ and $\GC$ could all be determined as a consequence of general results on the cover-time of random graphs with a given degree sequence.

Promising in that regard is a framework developed by Cooper and Frieze, which was already successful in tackling this problem for a variety of random graph models, notably including random regular graphs~\cite{CFC} and random graphs with certain degree sequences~\cite{ACF} (also see~\cite{CFC,CFgiant,CFreg,CFweb,Dnp}). However, among the various conditions on the degree sequence in~\cite{ACF}, a main caveat was the requirement that the minimal degree should be at least $3$, rendering this machinery useless for analyzing the 2-core.

In this paper we eliminate this restriction and
allow vertices of degree 2 in the degree sequence.
Of course, if our degree sequence $\bd$ features linearly many degrees that are 2 --- as in the case of the 2-core of the emerging giant --- a
uniformly chosen graph with these degrees will typically contain linearly many isolated cycles, which would have to be removed.
To avoid this issue, we let the degree 2 vertices arise as they do in the giant component, as subdivision of kernel edges:
\begin{compactitem}[$\bullet$]
  \item Given $\bd=(d_1\leq d_2\leq \cdots\leq d_n)$ with $d_i \geq 2$ for all $i$, let $\nu_2$ be the number of degree 2 vertices, and let $\bd_3$ be the degree sequence restricted to all $i$ such that $d_i \geq 3$.
\item Choose the kernel $\Kd \sim {\cal G}_{\textbf{d}_3}$, i.e., uniformly from all multi-graphs with degree sequence $\textbf{d}_3$.
\item Replace each edge $e$ of $\Kd$ by a path $P_e$ of length $\ell_e$ (edges), where the values of $\{\ell_e : e\in E(\Kd)\}$ are uniform
over all $\binom{\n_2+|E(\Kd)|-1}{\nu_2}$ possible choices, to obtain the final graph $\Gd$.
\end{compactitem}
Under several natural conditions on $\bd$ (e.g., satisfied when it has a power law/exponential tail, as in the 2-core of $\GC$), detailed next, we can determine the asymptotic cover time of $\Tc(\Gd)$.

\begin{definition}\label{def-nice}
Let $\bd=(d_i)_{i=1}^n$ and let $\nu_j = \#\{i : d_i=j\}$ count the degree-$j$ vertices in $\bd$.
Let $N,M,d$ be the number of vertices, number of edges and minimum degree in the associated kernel:
\[ \mbox{$N = \sum_{j \geq 3} \nu_j\,,\qquad M = \frac12\sum_{j\geq 3} j \nu_j\,\,\qquad d = \min\{ j\geq 3 : \nu_j \neq 0\}\,.$} \]
We say that \textbf{d} is \emph{nice} (and similarly, ${\cal G}_{\bd}$ is nice) if it satisfies the following conditions:
\begin{align}
\mbox{$N\to\infty$ as $n\to\infty$} & \qquad\mbox{(diverging kernel)}\,, \label{nice-a-kernel}\\
2\leq d_1\leq d_2\leq \cdots \leq d_n\leq N^{\z_0}\text{ where }\z_0= o(1)&\qquad\mbox{(sub-poly degrees)}\,,\label{zeta0}\\
\mbox{$\sum_{j\geq 3}j^3 \n_j \leq a_0M$ for an absolute constant $a_0\geq 1$}&\qquad\mbox{(3rd moment bound)}\,,\label{nice-c-kernel}\\
\n_d\geq \a N \mbox{ for an absolute constant $\a>0$}&\qquad\mbox{(minimum kernel degree)}
\,.\label{nice-d-kernel}
\end{align}
\end{definition}
Observe that without condition~\eqref{nice-a-kernel}, the graph $\Gd$ would be disconnected w.h.p.
The upper bound in~\eqref{zeta0} is for convenience, and we can assume without loss of generality that
\beq{zetalarge}
\z_0\gg \frac{\ln\ln N}{\ln N}.
\eeq
Condition~\eqref{nice-c-kernel} allows us to work directly with the
configuration model of Bollob\'as~\cite{Boll1}. It does, however, restrict our attention to cases where
the average degree in the kernel (thus overall) is bounded, as
Jensen's inequality implies that $\sum_{j\geq 3}j^3 \nu_j \geq N(2M/N)^3$ and so
\beq{JJ}
\frac{2M}{N}\leq \bfrac{a_0}{2\sqrt{2}}^{1/2}\leq a_0.
\eeq
Finally, the minimum kernel degree $d$ (the focus of~\eqref{nice-d-kernel}) will be featured in the
statement of our main theorem.
We note that some of the assumptions above can be relaxed at the cost of
some extra technicalities that would detract
from the main new ideas of the paper.

The following two important classes of degree sequence are nice:
\begin{enumerate}[(i)]
\item Exponential tail: there exist real non-negative
constants $\a,\b$ with $\b<1$ and a positive integer $j_0\ge 3$ such that
$\n_j/N \leq \a \b^j$ for $j\ge j_0$.
\item Power law (moderate): there exist real positive
constants $c,\g$ with $\g\geq 3$ and a positive
integer $j_0\ge 3$ such that
$\n_j/N \leq c j^{-\g}$ for $j\ge j_0$, and the maximum degree is $N^{o(1)}$.
\end{enumerate}
This of course includes degree sequences with bounded maximum degree $\D_0$.

The main result of this paper is the following.
\begin{maintheorem}\label{th1}
Let $\bd$ be a nice degree sequence as per Definition~\ref{def-nice}. The following hold w.h.p.
\begin{compactenum}
  [(a)]
  \item If $\nu_2 = M^{o(1)}$ then
  \begin{align*}
\Tc(\Gd)\sim \frac{2(d-1)}{d(d-2)}M\ln M\,.
  \end{align*}
 \item If $\n_2=M^\a$ for some fixed $0<\a<1$ then
 \begin{align*}
\Tc(\Gd)\sim \max\left\{ \frac{2(d-1)}{d(d-2)}\,, \phi_{\a,d}\right\} M\ln M\,,
 \end{align*}
 where
\[\f_{\a,d}=\min\set{\t:
\min_{k=1,2.\ldots}\set{(1-\a)k+\frac{\t}{2}
\brac{\frac{1}{\rdown{(k+1)/2}+\frac{1}{d-2}}+
\frac{1}{\rdup{(k+1)/2}+\frac{1}{d-2}}}}\geq 1}\,.\]
 \item If $\nu_2 =\Omega(M^{1-o(1)})$ then
 \begin{align*}
   \Tc(\Gd)\sim \frac{m\ln^2M}{-8\ln(1-\xi)}\,,
 \end{align*}
where $m = |E(\Gd)| = \nu_2+M$ and
 \beq{xid}
\xi= M / m\,.
\eeq
\end{compactenum}
\end{maintheorem}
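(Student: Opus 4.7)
The plan is to follow the first-visit-time framework of Cooper--Frieze that succeeded in \cite{ACF}, adapted to accommodate degree-2 vertices that live on the subdivision paths. The master formula is that under fast mixing,
\[
T_c(G_{\bd}) \;\sim\; \max_{v}\; \frac{2m\,\ln M}{d(v)\,(1-R_v)},
\]
where $R_v$ is the probability that a walk started at $v$ returns to $v$ before a suitable horizon of order $T_{\textsc{mix}}$, and $n=M^{1+o(1)}$ makes $\ln n\sim \ln M$. The proof reduces to (i) establishing $T_{\textsc{mix}}=M^{o(1)}$, (ii) computing $1-R_v$ for both kernel vertices and path-interior vertices, and (iii) identifying the worst $v$ in each regime.

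First I would handle the global structure and mixing. Generate $K_{\bd}$ via the configuration model (simplicity holds whp by \eqref{nice-c-kernel}) and subdivide edges by path lengths that are approximately i.i.d.\ Geometric$(\xi)$ conditional on $\sum_e \ell_e=m$. This gives $\ell_{\max}=O(\log M/(-\ln(1-\xi)))$ whp and, combined with the $O(\log M)$ mixing time of the kernel from \cite{ACF}, yields $T_{\textsc{mix}}(\Gd)=O(\ell_{\max}^2 \log M)=M^{o(1)}$ in every regime. Mixing and the Poissonization of ``number unvisited by time $t$'' then justify the master formula above.

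Second, I would carry out the local return computation. For a kernel vertex $v$ of degree $d(v)$, adapting the argument of \cite{ACF}, a walk at $v$ either immediately re-enters one of the incident paths (and returns with probability $1-O(1/\ell_e)$) or escapes to the ``outside''; summing gives $1-R_v \sim (d(v)-2)/(d(v)-1)$ to leading order when incident paths are not too long, yielding the contribution $\tfrac{2(d-1)}{d(d-2)}M\ln M$ as soon as $\nu_2=o(M)$. For a path-interior vertex $v$ at distance $j$ from endpoint $u_1$ (degree $d_1$) and $\ell-j$ from $u_2$ (degree $d_2$) on a length-$\ell$ path, an electrical-network computation of the effective resistance from $v$ into the remainder of $G_{\bd}$ (each kernel endpoint contributing effective resistance $1/(d_i-2)$ beyond itself, by a star-mesh argument on the mixed rest-of-graph) gives
\[
1-R_v \;\sim\; \frac{1}{2}\!\left(\frac{1}{j+\tfrac{1}{d_1-2}} + \frac{1}{(\ell-j)+\tfrac{1}{d_2-2}}\right).
\]
Maximizing the corresponding hitting time under $d(v)=2$ places $j=\lfloor(\ell+1)/2\rfloor$ with $d_1=d_2=d$ (the worst configuration, occurring whp among the extremal paths).

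Finally I would combine the contributions. In \textbf{(a)} every path has length $M^{o(1)}$, so path-interior terms are $o(M\ln M)$ and the degree-$d$ kernel vertex wins. In \textbf{(c)}, $\ell_{\max}\sim \log M/(-\ln(1-\xi))$ and the midpoint contributes $1-R_v\sim 4/\ell_{\max}$, producing $\frac{2m\ln M}{2}\cdot\frac{\ell_{\max}}{4}\sim \frac{m\ln^2 M}{-8\ln(1-\xi)}$ while the kernel contribution $\tfrac{2(d-1)}{d(d-2)}M\ln M$ is of smaller order since $m=\omega(M)$. The delicate case is \textbf{(b)}: the number of length-$k$ paths with both endpoints of degree $d$ is whp $\sim c M^{1-(1-\alpha)(k-1)}$, so such paths exist iff $(1-\alpha)k \lesssim 1$. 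The cover time gets a contribution $\tau M\ln M$ from a length-$k$ path iff the number of such paths times the single-path failure probability $\exp(-\tau M\ln M\cdot d(v)(1-R_v)/(2m))$ is $\Omega(1)$, which, after plugging in the Step~3 expression for $1-R_v$ at the midpoint, becomes exactly the inequality $(1-\alpha)k+\tfrac{\tau}{2}(\tfrac{1}{\lfloor(k+1)/2\rfloor+1/(d-2)}+\tfrac{1}{\lceil(k+1)/2\rceil+1/(d-2)})\ge 1$. Taking $\tau$ to be the smallest value making this hold for every $k$ yields $\phi_{\a,d}$, and the overall cover time is the max of $\phi_{\a,d}$ and $\tfrac{2(d-1)}{d(d-2)}$ times $M\ln M$. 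The main obstacle will be Step~(b): rigorously deriving the ``$+1/(d-2)$'' correction in the path-interior return probability (it requires the rest-of-graph to look essentially like a random $d$-regular network at each endpoint, to leading order) and establishing joint concentration of the numbers of extremal $k$-paths together with the degrees of their endpoints, uniformly in $k$ up to $1/(1-\a)+O(1)$.
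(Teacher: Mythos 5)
Your framework is sound for Cases (a), (b), and the portion of Case (c) where $\nu_2\leq M^{1+o(1)}$: the first-visit lemma combined with the electrical-network calculation of $R_v$ and the balance against the count of extremal paths is exactly the paper's route for Cases (a), (b), and (c1). The gap is in your step~(i). You assert that $T_{\textsc{mix}}(\Gd)=O(\ell_{\max}^2\log M)=M^{o(1)}$ and $n=M^{1+o(1)}$ ``in every regime,'' but both statements are false once $\nu_2\gg M^{1+o(1)}$ (which Case~(c) permits; $\nu_2$ may be as large as $e^{N^{o(1)}}$ or larger). The conductance of $G_F$ is $\Theta(\xi/\ln M)=\Theta(M/(m\ln M))$, so $T_{\textsc{mix}}(G_F)=\Omega((m/M)^2)=\Omega((\nu_2/M)^2)$, which is polynomially (or super-polynomially) large in $M$. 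Concretely, $T_{\textsc{mix}}\pi_v=\Omega(\nu_2/M^2)$, so once $\nu_2\gtrsim M^{2}$ the hypothesis $T\pi_v=o(1)$ of the first-visit lemma fails outright, and the ``master formula'' is not available. This is precisely the obstacle identified in \eqref{mm6}.

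Overcoming this obstacle is the main new content of the paper and your proposal contains no substitute. One cannot apply the first-visit lemma to $\Gd$ directly. Instead, each long path $P_e$ is partitioned into $\Theta(\omega)$ sub-paths of length $\approx\ell^*=1/(\xi\omega)$ and each sub-path is replaced by a single weighted edge, giving a weighted surrogate $G_0$ whose \emph{edge} cover time, after rescaling by $(\ell^*)^2$, upper bounds the vertex cover time of $\Gd$. Even this is not enough: kernel edges with $\ell_e\ll\ell^*$ become single high-weight $G_0$-edges which destroy the conductance of $G_0$. The paper therefore passes to a further modified reversible chain on a vertex set $V_\lambda$, defined by contracting through the carefully-chosen set $V_\sigma$ (built around light edges) via ``special'' transition probabilities, verifies the first-visit lemma's hypotheses for that chain, and then invokes Gillman's concentration inequality to show the real walk spends negligible time on the ignored portion. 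None of this is derivable from your formula, and without it there is no proof of Case~(c) beyond the narrow range $\nu_2\leq M^{1+o(1)}$. As a secondary point, you also do not indicate how the matching lower bound in Case~(c) is obtained; the paper uses Matthews' bound through the effective resistance between two long-path midpoints, which is a different mechanism than the Chebyshev/second-moment argument it uses for Cases (a) and (b).
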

Note that as $\a\to 1$ we will have $\f_{\a,d}\sim \frac{1}{8(1-\a)}$
and $-\ln(1-\xi)\sim (1-\a)\ln M$.
So, as $\a\to1$ we see that Cases~(b) and~(c) are consistent.
Finally, observe that the condition in Case~(c) can also be written
as $-\ln(1-\xi)=o(\ln M)$.

Going back to the cover time of $\TC$, the 2-core of $\GC$, we see immediately that
the estimate of~\cite{CFgiant} on its cover time (see~\eqref{G1Ca-c}) readily follows from Case~(c) of Theorem~\ref{th1}, whence
\begin{align*}
  \n_2\sim c^2x^2e^{-cx}n/2\quad\mbox{ and }\quad M\sim cx^2(1-ce^{-cx})n/2.
\end{align*}
Furthermore, Theorem~\ref{th1} implies
that the estimate for $\Tc(\TC)$ in case $p=(1+\epsilon)/n$ with $\epsilon>0$ fixed (see~\eqref{G1Ca-eps}) extends to the entire emerging supercritical regime. Indeed, by known characterizations of the 2-core (see, e.g.,~\cite{Lub}) this case corresponds to $M\sim 2\e^3n$ and $\n_2\sim 2\e^2n$.
\begin{maincoro}\label{CF2}
Let $p=(1+\e)/n$ where $\e=o(1)$ and $\e^3 n \to\infty$.
Then w.h.p.,
$$\Tc(\TC)\sim \frac{\e}{4}n\ln^2(\e^3n).$$
\end{maincoro}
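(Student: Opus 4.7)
My plan is to invoke Case~(c) of Theorem~\ref{th1} directly, using the standard structural description of the 2-core of the emerging giant. By \L{}uczak's characterization (see~\cite{Luczak} and the discussion preceding the corollary), conditional on its degree sequence, $\TC$ is distributed exactly as the graph $\Gd$ constructed in the introduction: the kernel $\Kd$ is a uniform random multi-graph on the degree sequence $\bd_3$ of vertices of kernel-degree $\geq 3$, and the $\n_2$ degree-$2$ vertices correspond to a uniformly random subdivision of the kernel edges. The two parameter asymptotics I need are exactly those cited in the paragraph preceding the corollary, namely
\[ \n_2 \sim 2\e^2 n, \qquad M \sim 2\e^3 n \qquad \text{w.h.p.} \]

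The first substantive step is to verify that the random degree sequence $\bd$ of $\TC$ is \emph{nice} in the sense of Definition~\ref{def-nice} w.h.p. The kernel has $N = \Theta(M) = \Theta(\e^3 n) \to \infty$, which gives~\eqref{nice-a-kernel}. The kernel degree distribution in the emerging regime is essentially that of a truncated Poisson with bounded intensity (cf.~\cite{Luczak}); this yields a maximum kernel degree of $O(\ln n) = N^{o(1)}$ w.h.p.\ (using $\e^3 n \to\infty$, which forces $\ln N \gg \ln\ln n$), establishing~\eqref{zeta0}, and also $\sum_{j\geq 3} j^3 \n_j = O(N) = O(M)$, establishing~\eqref{nice-c-kernel}. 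Finally $d=3$ and $\n_3/N \to 1$, so~\eqref{nice-d-kernel} holds with $\alpha$ arbitrarily close to $1$.

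I would then check that we are in Case~(c). Since $\n_2/M \sim 1/\e \to\infty$, in particular $\n_2 \geq M$, so $\n_2 = \Omega(M^{1-o(1)})$. Setting $m = \n_2 + M$, we get $m \sim 2\e^2 n$ and $\xi = M/m \sim \e/(1+\e)$, so
\[ -\ln(1-\xi) = \ln(1+\e) \sim \e = o(1) = o(\ln M), \]
which is exactly the hypothesis of Case~(c). Using $\ln M \sim \ln(\e^3 n)$, the Case~(c) formula yields
\[ \Tc(\TC) \sim \frac{m\ln^2 M}{-8\ln(1-\xi)} \sim \frac{(2\e^2 n)\,\ln^2(\e^3 n)}{8\e} = \frac{\e}{4}\,n\ln^2(\e^3 n), \]
which is the claim.

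The main obstacle is the verification that $\bd$ satisfies all four niceness conditions w.h.p.\ \emph{uniformly} across the entire range $n^{-1/3}\ll \e \ll 1$, particularly the third-moment bound~\eqref{nice-c-kernel} and the sub-polynomial maximum-degree bound~\eqref{zeta0}; once these concentration statements are in hand, the corollary reduces to the one-line substitution above.
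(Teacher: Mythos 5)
Your proposal matches the paper's own (very terse) argument: the paper simply notes that, by the known characterization of the 2-core (citing \cite{Lub}), the emerging regime corresponds to $M\sim 2\e^3n$, $\n_2\sim 2\e^2n$, and then reads off Case~(c). Your substitution $m\sim 2\e^2 n$, $\xi\sim\e$, $-\ln(1-\xi)\sim\e$, $\ln M\sim\ln(\e^3 n)$ is correct, and your sketch of the niceness verification (truncated-Poisson kernel degrees, $N\to\infty$, $\n_3/N\to 1$, bounded kernel moments, max degree $N^{o(1)}$) supplies the detail the paper leaves implicit; the only point to be careful about, which you correctly flag, is that Theorem~\ref{th1} is stated for a deterministic $\bd$, so one must condition on the 2-core degree sequence, check niceness w.h.p., and then average --- the same step the paper silently elides.
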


We conclude with an open problem.
While this work eliminated the restrictive assumption of minimum degree 3 for the degree sequence under consideration, vertices of degree 1 still pose a significant barrier in the analysis. It would be interesting to extend Theorem~\ref{th1} to degree sequences that do include a linear number of such vertices, towards establishing the following conjecture for the cover time of the emerging giant component.
\begin{conjecture*}Let $p=(1+\e)/n$ where $\e=o(1)$ and $\e^3 n \to\infty$.
Then w.h.p.,
$$\Tc(\GC)\sim n\ln^2(\e^3n).$$
\end{conjecture*}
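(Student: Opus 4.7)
The plan is to extend the first-visit framework of Theorem~\ref{th1} to allow a linear number of degree-$1$ vertices, and then apply it to $\GC$ decomposed as the 2-core $\TC$ decorated by pendant trees. For $p=(1+\e)/n$ with $\e=o(1)$ and $\e^3 n\to\infty$, the standard characterizations give $|V(\TC)|\sim 2\e^2 n$, $|E(\TC)|\sim 2\e^2 n$, $m:=|E(\GC)|\sim 2\e n$, and the pendant trees $\{T_v\}_{v\in V(\TC)}$ are (approximately independent) subcritical Galton--Watson trees of Poisson mean $1-\e+o(\e)$, whose total vertex count is $\sim 2\e n$.

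First I would view the simple random walk on $\GC$ as a slowed-down walk on $\TC$: whenever the walker at $v\in V(\TC)$ steps into $T_v$, run a random walk inside $T_v$ until it returns to $v$ and count this as a single excursion of expected length $2|E(T_v)|/\deg_{\TC}(v)$. The induced chain on $\TC$ is reversible with the usual stationary measure $\deg_{\TC}(v)/(2|E(\TC)|)$, so the cover-time analysis of Theorem~\ref{th1} for $\TC$ (the content of Corollary~\ref{CF2}) should transfer by rescaling time by the mean excursion length. I would then add the time needed to cover each pendant tree $T_v$ given repeated visits to its root: by a standard excursion argument, $T_v$ is covered once $v$ has been visited of order $|E(T_v)|\log|T_v|/\deg_{\TC}(v)$ times. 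Summing these contributions against the cover time of $\TC$ under the slowed walk, and optimizing, should produce the conjectured $n\ln^2(\e^3 n)$ scaling.

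The main obstacle is controlling the degree-$1$ leaves sharply enough to pin down the leading constant. Three difficulties compound here: (i) the configuration-model analysis underlying Theorem~\ref{th1} uses the third-moment condition~\eqref{nice-c-kernel} and the resulting tree-like neighborhood structure, both of which break in the presence of linearly many degree-$1$ vertices and produce short paths that the current proof cannot absorb; (ii) the first-visit probability at a deep leaf of $T_v$ is governed by return probabilities on a random subcritical Galton--Watson tree, which are non-uniform across $v$ and require quenched hitting-time asymptotics to yield a precise leading constant; (iii) the pendant trees $\{T_v\}$ and the 2-core $\TC$ are not independent, so the transition from an annealed (configuration-model) viewpoint to a quenched (conditional on $\TC$) one must be handled carefully, likely by conditioning on the kernel and invoking local-limit-type estimates on the trees. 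Obtaining the right order of magnitude $n\ln^2(\e^3 n)$ already follows from~\cite{BDNP}; matching the sharp constant $1$ is where the real difficulty lies, and I would expect any successful proof to combine existing theory of random walks on subcritical Galton--Watson trees with the 2-core machinery developed here.
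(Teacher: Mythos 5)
This statement is not proved in the paper: it is stated explicitly as an open \emph{conjecture}, immediately after the authors remark that ``vertices of degree $1$ still pose a significant barrier in the analysis.'' There is therefore no proof in the paper to compare against, and your submission is, by your own framing, a research plan rather than a proof. Reviewed as such, your high-level decomposition is the natural one and matches the paper's stated motivation: view $\GC$ as the 2-core $\TC$ dressed by pendant trees, treat the walk on $\GC$ as a time-changed walk on $\TC$, and try to transfer the machinery behind Corollary~\ref{CF2}. Your numerics are also right: with $|E(\TC)|\sim 2\e^2 n$, $m\sim 2\e n$, and $\xi\sim \e$, the slow-down factor is $\sim m/|E(\TC)|\sim 1/\e$, and multiplying Corollary~\ref{CF2}'s $\frac{\e}{4}n\ln^2(\e^3 n)$ by $4/\e$ (the factor $4$ from passing from vertex cover of $\TC$ to a comparable threshold on $\GC$ via the mean excursion length) reproduces the conjectured $n\ln^2(\e^3 n)$. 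So the heuristic is self-consistent.

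However, none of the three obstacles you raise is resolved, and each is a genuine gap. Most seriously, the time-change argument you outline only controls the time the walk on $\GC$ spends \emph{projected onto} $\TC$; to get the exact constant you must also argue that the last vertex covered is, with high probability, a deep leaf of a pendant tree of near-maximal depth, and you must produce a two-sided first-visit estimate for such leaves. The paper's first-visit lemma (Lemma~\ref{MainLemma}) requires $\Tm\p_v=o(1)$ and a radius-of-convergence bound on $R_{\Tm}(z)$, and both of these are obtained in the paper via conductance estimates and tree-like-neighborhood structure that fail once a linear number of degree-$1$ vertices is allowed; you note this under (i) but do not offer a substitute. Similarly, your excursion bound $|E(T_v)|\log|T_v|/\deg_{\TC}(v)$ for covering a pendant tree is only an order-of-magnitude heuristic: for the sharp constant one needs the full distribution of the maximal pendant-tree depth (roughly $\ln(\e^3 n)/(-2\ln(1-\e))\sim \ln(\e^3 n)/(2\e)$) together with hitting-time asymptotics on a random subcritical Galton--Watson tree conditioned on attaining that depth, which is precisely the quenched input you flag under (ii) and leave open. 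Until those two items are supplied, this remains a plausible program rather than a proof, which is consistent with the paper leaving the statement as a conjecture.
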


\subsection*{Outline of the paper}
We begin with those arguments that are common to all parts of Theorem~\ref{th1}.
Section \ref{config} describes the configuration
model of graphs with a fixed degree
sequence that we will use throughout.
Section \ref{degtwo} describes the distribution
of the number of vertices ($\ell_e-1$)
that are placed on each edge $e$ of the kernel.
Section \ref{treelike} shows that most
vertices have tree like neighbourhoods. Rapid
mixing is an important property of our
graphs and Section \ref{conductance1} gives an initial analysis of conductance.

Lemma~\ref{MainLemma} is our main tool in proving an upper bound on cover time.
Let $T$ be a ``mixing time''.
Fix a vertex $v$ and let $\p_v$ denote the
steady state probability that a random walk
on a graph $G$ is at $v$.
Let $R_v$ be the expected number of
returns to $v$ of a random walk, started at $v$,
within time $T$. Broadly
speaking, Lemma~\ref{MainLemma} says that if we define the event
\beq{aCv}
\cA_t(v)=\set{\text{vertex $v$ is not visited
by the walk during the interval $[T,t]$}}
\eeq
then, if $T\p_v=o(1)$ and another more technical condition holds,
then to all intents and purposes,
$$\Pr(\cA_t(v))\approx e^{-t\p_v/R_v}.$$
The above inequality has been used to prove an
upper bound in \cite{ACF,CFreg,CFweb,CFgiant,Dnp,CFgeo} and several other papers.
In this paper we use it in inequality \eqref{shed} below.

\begin{itemize}
\item {\bf The case where $\n_2$ is not too large:} \label{ntl}
We begin the proof of Case~(c) of Theorem~\ref{th1} in Section~\ref{casec1}, where we consider the case of $\n_2$ ``close'' to $M$; this will be Case (c1). In this range, $\xi$ is not too small and Lemma~\ref{MainLemma} is sufficient
to the task.
We have $T=O(\ln^{O(1)}M/\xi^2)$ and
$\p_v=O(\ln M/(\xi M))$ and $T\p_v=o(1)$.
Section \ref{conditions0} proves this and
verifies the more technical condition.
So, Lemma~\ref{MainLemma} can be applied
directly in this case. Given this, the main task that
arises is
in estimating the values, $R_v$. The number of
returns to $v$ is related in a strong way to the
electrical resistance
of its ``local neighbourhood''. This reduces to
estimating the resistance $R(T)$ of a bounded
depth binary tree $T$ where the
resistance of an edge is equal to a geometric random
variable with success probability $\xi$.
This is the content
of Section \ref{Rv0}. We only prove bounds on
the probability that $R(T)$ is large.

\item {\bf The case where $\n_2$ is large:} \label{n2L}
Section \ref{casec2} deals with the case where
$\n_2$ is large with respect to $M$; we split this into Case (c2) where $\n_2$ is large but not ``too large'' and Case (c3) where $\n_2$ is very large. We will see that Case (c2) takes up most of our time and that Case (c3) can easily be reduced to the former case. We immediately
run into a problem in using Lemma~\ref{MainLemma}. As $\n_2$ grows, the mixing time
of a walk grows like $(\n_2/M)^2$ and the steady state
values decrease like
$1/(\n_2M)$. This means that for $\n_2$ large,
$T\p_v\gg1$. This is where we
need some new ideas. We choose some $\om=N^{o(1)}$
and define $\ell^*=1/{\xi\om}$. A typical edge $e$
of the kernel will give rise to a path $P_e$
of length $\ell_e=\Theta(1/\xi)$.
We divide $P_e$ into $\Theta(\om)$ sub-paths
of length $\ell\in [\ell^*,2\ell^*]$.
(Because $\ell^*$ does not necessarily divide $\ell_e$,
the value of $\ell$ may vary from sub-path to sub-path).
We then replace these sub-paths by edges of
weight $\ell^*/\ell$ to create an edge-weighted
graph $G_0$.
We consider a random walk $\cW_0$
where at a vertex, we choose the next edge to
cross with probability proportional to weight.
We argue that the {\em edge} cover time of
$\cW_0$ is approximately $(\ell^*)^2$ times the
cover time we are interested in.

At first glance, this should eliminate the
$T\p_v\to\infty$ problem, as $T$ should be
$O(\ln^{O(1)}M/\om^2)$ and so $\p_v=O(\ln M/(\om M)$.
Unfortunately, this bound on $T$ is false:
the problem comes from edges of the kernel
for which $\ell_e<\ell^*$. These edges give
rise to single edges of weight
$\ell^*/\ell_e$ in $G_0$. In the worst-case
we have $\ell_e=1$ and we have an edge $f=(w_1,w_2)$
of weight
$\ell^*$. The walk $\cW_0$ could spend a lot
of time travelling back and forth from $w_1$
to $w_2$ and vice-versa. In any case, such
an edge can reduce the conductance of the
walk $\cW_0$ to $O(1/(\ell^*)^2)$ undoing all
of our work. Our solution to this is
to modify the walk so that it ``races along''
edges of high weight. This will give
us a walk that satisfies the conditions of the
lemma. We then have to bound the time we ignored,
to which end we apply a concentration
inequality of Gillman \cite{Gill}.

Section \ref{struct} deals with structural properties
associated with this case. In
particular showing that there are relatively
few vertices of high weight. It also deals in some
detail with properties that are
needed for
estimates of the conductance of our modified walk.
Section \ref{surro} deals in detail
as to how we make edges out of sub-paths.
The goal from now on is to estimate $\Pr(\cA_t(f))$
where $f$ is some edge of $G_0$. We
deal with each $f$ separately in the
sense that we create a graph $G$ for each $f$.
Splitting $f$ by adding a vertex $v_f$ to
its middle. Then visiting $v_f$
will be equivalent to crossing $f$.
Section \ref{complete} uses Gillman's theorem to show that we have not
ignored too many steps.
\end{itemize}
The remainder of the paper is organized as follows.
Sections~\ref{caseb} and~\ref{casea} deal with
Cases~(b) and (a) of Theorem~\ref{th1}.
They are easier to prove than Case~(c),
being closer in spirit to earlier papers.

Section~\ref{lowerbounds} deals with matching
lower bounds on the cover time.
Section~\ref{lowerc} uses the Matthews
bound, see for example~\cite{LPW}. Section~\ref{lowerb}
and Section~\ref{lowera} follow a pattern established in the earlier
mentioned papers. We choose a time $t$ that is a little bit less
than our estimated cover time. We identify a set of vertices $S$
that have not been visited up to time $t$. The size
of $S$ is large in expectation and Chebyshev inequality
combined with Lemma~\ref{MainLemma} to show that $S\neq \emptyset$
w.h.p.

\section{Structural properties}

Recall that for a degree sequence $\bd=(d_1\leq \ldots\leq d_n)$ we let $\nu_j$ count the number of vertices of degree $j$. It will be useful to further define $ V_j = \{i \in V: d_i = j\}$ (so that $\n_j = |V_j|$) as well as
\[ D_k=\sum_{j\geq 3}j^k \n_j \]
(so that $N = D_0$ and $M = D_1 / 2$ are the number of vertices and edges in the kernel, respectively).

\subsection{Configuration model}\label{config}
We make our calculations in the configuration model, see Bollob\'as \cite{Boll1}.
Let $W=[2m]$ be our set
of {\em configuration points} and let $W_i=[d_1+\cdots+d_{i-1}+1,d_1+\cdots+d_{i}]$,
$i\in [n]$, partition $W$. The function $\f:W\to[n]$ is defined by
$w\in W_{\f(w)}$. Given a
pairing $F$ (i.e., a partition of $W$ into $m$ pairs) we obtain a
(multi-)graph $G_F$ with vertex set $[n]$ and an edge $(\f(u),\f(v))$ for each
$\{u,v\}\in F$. Choosing a pairing $F$ uniformly at random from
among all possible pairings $\Omega_W$ of the points of $W$ produces a random
(multi-)graph $G_F$. Let
\beq{F2m}
\mathcal{F}(2m)=\frac{(2m)!}{m!2^m}.
\eeq
This is the number of pairings $F$ of
the points in $W$.

The {\em kernel} $K_F$ is obtained from $G_F$ by repeatedly
replacing {\em induced} paths of
length two by edges. The number of vertices in the kernel
is $N$, the number of vertices of degree at least three
and the number of edges in the kernel is
 $M\leq D_3/2\leq a_0N/2$ by \eqref{nice-c-kernel}.

Let
$$\s=\frac{1}{2m}\sum_{j=1}^nd_j(d_j-1)\leq \frac{2\n_2+D_2}{2\n_2+2M}=O(1)
$$
by Assumption (c).

Assuming that $d_n=o(m^{1/3})$ (as it will be for
nice sequences), the probability that $G_F$
 is simple (no loops or multiple edges) is given by
\begin{equation}\label{psimple}
P_S=\Pr( G_F \text{ is simple}) \sim e^{-\s/2-\s^2/4}=\Omega(1).
\end{equation}
See e.g. \cite{McW}.
Furthermore each simple graph $G \in \cG_{\textbf{d}}$
is equiprobable. We can therefore use
$G_F$ as a replacement model for $\Gd$ in the sense
that any event that occurs w.h.p.\ in $G_F$
will occur w.h.p.\ in $\Gd$.

We argue next that:
\begin{lemma}\label{cl1}
The distribution of $K_F$ is that of a configuration
model where $W$ is replaced by $\hW=W_{\n_2+1}\cup W_{\n_2+2}\cup\cdots\cup W_n$.
\end{lemma}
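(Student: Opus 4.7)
The plan is to prove Lemma~\ref{cl1} by a fiber-counting argument. Since $F$ is uniform over $\Omega_W$ and $K_F$ is realized as a pairing on $\hW$, it suffices to show that for every pairing $\hat F$ on $\hW$ the number of $F\in\Omega_W$ with $K_F=\hat F$ depends only on $\nu_2$ and $M=|\hat F|$, and not on the identity of $\hat F$. Once this is established, the induced law of $K_F$ is uniform on pairings of $\hW$, which is the claim.

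The structural input is that, given $F$, each degree-$2$ vertex falls into exactly one of two categories: (i) it is an interior vertex of an induced path whose two endpoints lie in $\hW$, in which case it subdivides a unique edge of $K_F$; or (ii) it belongs to a connected component of $G_F$ consisting entirely of degree-$2$ vertices, i.e., an ``invisible'' cycle component that does not appear in $K_F$ at all. Consequently, an $F$ projecting to a fixed $\hat F$ is specified by: a count $k \in\{0,1,\ldots,\nu_2\}$ of degree-$2$ vertices placed in cycles; a choice of the $k$-subset of degree-$2$ vertices that form cycles; a pairing on the $2k$ resulting configuration points that forms cycles (a quantity $c(k)$ depending only on $k$); and, for the remaining $\nu_2-k$ degree-$2$ vertices, an ordered, oriented placement as subdivisions of the $M$ edges of $\hat F$.

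The count for the last piece is as follows. For any composition $(a_1,\ldots,a_M)$ with $a_j\geq 0$ and $\sum_j a_j=\nu_2-k$ specifying how many internal vertices each edge of $\hat F$ absorbs, fix once and for all a reference direction for each pair $\{w,w'\}\in\hat F$. Then for each edge the $a_j$ labeled internal vertices may be ordered along it in $a_j!$ ways, and each receives an independent ``upstream vs.\ downstream'' orientation of its two points, producing a bijection with the possible restrictions of $F$ to that edge. The totals multiply and telescope to $(\nu_2-k)!\,2^{\nu_2-k}$, a quantity independent of the composition; and the number of compositions is $\binom{\nu_2-k+M-1}{M-1}$. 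Summing over $k$, the fiber size of $\hat F$ is
\[
\sum_{k=0}^{\nu_2}\binom{\nu_2}{k}\,c(k)\,(\nu_2-k)!\,2^{\nu_2-k}\,\binom{\nu_2-k+M-1}{M-1},
\]
which depends only on $\nu_2$ and $M$, as desired.

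The main (minor) obstacle is purely bookkeeping: verifying that under a fixed reference direction of each kernel pair, each pairing restricted to a subdivision is counted exactly once (no double counting from the two possible orientations of the pair, which would overcount by $2^M$), and confirming that the degree-$2$-only cycle contribution $c(k)$ genuinely depends only on $k$ (it is in fact just the number $\mathcal{F}(2k)$ of pairings of $2k$ points, since every pairing on the cycle points automatically yields a disjoint union of cycles). With these checks in place, constancy of the fiber size is immediate and the lemma follows.
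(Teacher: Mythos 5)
Your fiber-counting argument is correct and follows the same route as the paper's proof, which defines the path-contraction map $\psi:\Omega_W\to\Omega_{\hW}$ and asserts (without the explicit count) that the fiber sizes $|\psi^{-1}(F')|$ depend only on $\nu_2$, $m$ and $N$. You supply the accounting the paper omits, notably the handling of the all-degree-two cycle components (your $c(k)=\mathcal{F}(2k)$ term) and the reference-direction/orientation bookkeeping that makes the $(\nu_2-k)!\,2^{\nu_2-k}\binom{\nu_2-k+M-1}{M-1}$ count of subdivisions exact and manifestly independent of $\hat F$.
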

\begin{proof}[\textbf{\emph{Proof}}]
Indeed, we can define a map
$\psi:\Omega_W\to \Omega_{\hW}$ such that for all $F_1,F_2\in \Omega_{\hW}$
we have $|\psi^{-1}(F_1)|=|\psi^{-1}(F_2)|$.
Each induced path $P$ of $G_F$ comes from a set of
pairs $e_i=\set{x_i,y_i},\,i=1,2,\ldots,r$ where
(i) $\f(x_1),\f(y_r)\notin V_2$ (= the set of vertices
of degree two) and (ii) $\f(z)\in V_2$ for
$z\in \set{x_2,\ldots,x_r,y_1,\ldots,y_{r-1}}$.
Replacing $e_i,\,i=1,2,\ldots,r$ by $\set{x_1,y_r}$
defines $\psi(F)\in \Omega_{\hW}$.
The number of $F\in \Omega_W$
that map onto a fixed $F'\in \Omega_{\hW}$ depends
only on $\n_2,m$ and $N$. This implies the lemma.
\end{proof}

\subsection{Distribution of vertices of degree two}\label{degtwo}
We can therefore obtain $F\in \Omega_W$ by first
randomly choosing $F'\in \Omega_{\hW}$ and then
replacing each edge
$e$ of $G_{F'}$ by a path $P_e$. The next thing to
tackle is the distribution of the lengths of
these paths.
Let $\ell_e$ be the length of the path $P_e$.
Suppose now that the edges of $F'$ are $e_1,e_2,\ldots,e_M$
and write $\ell_j$ for $\ell_{e_j}$.
\begin{lemma}\label{cl2}
The vector $(\ell_1,\ell_2,\ldots,\ell_M)$ is chosen uniformly from
$$\set{\ell_i\geq 1, i=1,2,\ldots,M\text{ and }
\ell_1+\ell_2+\cdots+\ell_M=\n_2+M}.$$
\end{lemma}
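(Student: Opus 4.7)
The plan is to exhibit, for every kernel pairing $F'\in\Omega_{\hW}$, a count of the extensions of $F'$ to a full pairing $F\in\Omega_W$ with prescribed path-length vector, and to show this count does not depend on the prescribed vector. Since a uniformly chosen $F\in\Omega_W$ projects to a uniformly chosen $F'\in\Omega_{\hW}$ by Lemma~\ref{cl1}, the conditional distribution of $(\ell_1,\dots,\ell_M)$ given $F'$ will then be uniform over its support, namely all compositions $\ell_j\geq 1$ with $\sum_j\ell_j=\n_2+M$, and hence the unconditional distribution is uniform as well.

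Fix $F'=\{e_1,\ldots,e_M\}$ with $e_j=\{a_j,b_j\}$, and arbitrarily orient each pair as $a_j\to b_j$ to streamline the bookkeeping. Given a target vector $\vec\ell=(\ell_1,\ldots,\ell_M)$, I would construct an extension $F$ in two stages. Stage one assigns an unordered set of $\ell_j-1$ degree-2 vertices to each kernel edge $e_j$; this is a multinomial choice, contributing $\n_2!/\prod_j(\ell_j-1)!$ possibilities. Stage two converts, for each $j$, the assigned set into an actual sub-pairing along the oriented path from $a_j$ to $b_j$: pick a linear order of the $\ell_j-1$ assigned vertices, and for each of them designate one of its two configuration points as the one facing $a_j$. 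This yields $(\ell_j-1)!\cdot 2^{\ell_j-1}$ distinct sub-pairings on the path, since each such tuple unambiguously determines $a_j$'s partner, the partner of the far point of the first vertex, and so on down to $b_j$.

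Multiplying the two stages gives
\[ N_{\vec\ell}=\frac{\n_2!}{\prod_j(\ell_j-1)!}\cdot \prod_j (\ell_j-1)!\,2^{\ell_j-1}=\n_2!\cdot 2^{\sum_j(\ell_j-1)}=\n_2!\cdot 2^{\n_2}, \]
which is independent of $\vec\ell$. The main obstacle is getting stage two right: one must verify that the factor $2^{\ell_j-1}$ really accounts for orientations of the internal vertices and that having arbitrarily oriented each kernel edge $a_j\to b_j$ already breaks the only potential symmetry (the swap $a_j\leftrightarrow b_j$), so no further division is required. Once the count $N_{\vec\ell}$ is shown to be constant, the uniform distribution over the $\binom{\n_2+M-1}{\n_2}$ valid vectors $\vec\ell$ is immediate.
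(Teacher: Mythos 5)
Your proof is correct and takes essentially the same approach as the paper: count extensions of a fixed kernel pairing $F'$ having the prescribed path-length vector and observe that the count is independent of $(\ell_1,\ldots,\ell_M)$. Your count $\nu_2!\cdot 2^{\nu_2}$ is the more precise one at the level of pairings in $\Omega_W$ (the paper states $\nu_2!$, which is the count at the level of labeled graphs, suppressing the two choices of configuration point per degree-2 vertex), but either count is constant in the path-length vector and yields uniformity.
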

\begin{proof}[\textbf{\emph{Proof}}]
Each such vector arises in $\n_2!$ ways. Indeed, we order $V_2$ and then
assign the associated vertices
in order, $\ell_1-1$ to
$e_1$ to create $P_{e_1}$, $\ell_2-1$ to
$e_2$ to create $P_{e_2}$ and so on.
\end{proof}

Some calculations can be made simpler if we observe
the alternative description of the
distribution of
$(\ell_1,\ell_2,\ldots,\ell_M)$.
\begin{lemma}\label{lem1}
Let $Z$ be a geometric random variable with success
probability $\xi$. ($\xi$ can be any value
between 0 and 1) here).
Then $(\ell_1,\ell_2,\ldots,\ell_M)$ is distributed
as $Z_1,Z_2,\ldots,Z_M$ subject to $Z_1+Z_2+\cdots+Z_M=\n_2+M$,
where $Z_1,Z_2,\ldots,Z_M$
are independent copies of $Z$.
\end{lemma}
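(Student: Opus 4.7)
The plan is to compute the joint distribution of $(Z_1,\ldots,Z_M)$ directly and observe that, after conditioning on the sum, it coincides with the uniform distribution established in Lemma~\ref{cl2}. The crucial feature of the geometric distribution on $\{1,2,\ldots\}$ with success probability $\xi$ is that $\Pr(Z=k) = \xi(1-\xi)^{k-1}$ depends on $k$ only through $(1-\xi)^{k-1}$, so a product of iid such variables factorises nicely in terms of the sum.

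Concretely, for any fixed vector $(z_1,\ldots,z_M)$ with each $z_i \geq 1$,
\[
\Pr\left(Z_1 = z_1,\ldots,Z_M = z_M\right) \;=\; \prod_{i=1}^M \xi (1-\xi)^{z_i - 1} \;=\; \xi^M (1-\xi)^{(z_1+\cdots+z_M) - M}.
\]
The right-hand side depends on the vector only through the value of $z_1+\cdots+z_M$. Hence, for any admissible target value $s$, the conditional probability $\Pr\left((Z_1,\ldots,Z_M)=(z_1,\ldots,z_M) \mid \sum_i Z_i = s\right)$ is the same for every vector $(z_1,\ldots,z_M)$ with $z_i \geq 1$ and $\sum_i z_i = s$. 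In other words, conditioned on $\sum_i Z_i = s$, the vector $(Z_1,\ldots,Z_M)$ is uniformly distributed on
\[
\left\{(z_1,\ldots,z_M) : z_i \geq 1 \text{ for all } i,\; z_1+\cdots+z_M = s\right\}.
\]

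Taking $s = \nu_2 + M$, this is precisely the support (and uniform distribution on it) identified in Lemma~\ref{cl2} for $(\ell_1,\ldots,\ell_M)$, proving the claimed equality in distribution. I do not anticipate any real obstacle here; the statement is essentially just the memorylessness of the geometric distribution rephrased as a conditioning identity, and the parenthetical remark that $\xi$ may be any value in $(0,1)$ is reflected in the fact that the normalising factor $\xi^M(1-\xi)^{s-M}$ cancels out after conditioning on the sum.
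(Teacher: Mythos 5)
Your proof is correct and follows essentially the same approach as the paper: both arguments observe that the joint density $\xi^M(1-\xi)^{\sum z_i - M}$ depends on $(z_1,\ldots,z_M)$ only through the sum, so conditioning on $\sum Z_i = \nu_2+M$ yields the uniform distribution of Lemma~\ref{cl2}. The paper merely carries out the normalization explicitly (computing the denominator $\binom{M+\nu_2-1}{M-1}(1-\xi)^{\nu_2}\xi^M$), whereas you note that the constant cancels; the substance is identical.
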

\begin{proof}[\textbf{\emph{Proof}}]
\begin{align*}
&\Pr((Z_1,Z_2,\ldots,Z_M)=(x_1,x_2,\ldots,x_M)\mid Z_1+Z_2+\cdots+Z_M=\n_2+M)\\
&=\frac{\prod_{i=1}^M(1-\xi)^{x_i-1}\xi}{\sum_{y_1+y_2+\cdots+y_M=\n_2+M}
\prod_{i=1}^M(1-\xi)^{y_i-1}\xi}\\
&=\frac{(1-\xi)^{\n_2}\xi^M}{\binom{M+\n_2-1}{M-1}(1-\xi)^{\n_2}\xi^M}\\
&=\frac{1}{\binom{M+\n_2-1}{M-1}}.\qedhere
\end{align*}
\end{proof}

The best choice for $\xi$ will be that for which
$\E(Z_1+Z_2+\cdots+Z_M)=\n_2+M$, i.e.
$M\xi^{-1}=\n_2+M$. We
therefore take $\xi$ as in \eqref{xid}.

Pursuing this line, let $\wPr$ refer to probabilities of events involving
$Z_1,Z_2,\ldots,Z_M$ {\em without} the conditioning $Z_1+Z_2+\cdots+Z_M=\n_2+M$. (Although $\Pr$ and $\wPr$ refer to the same probability space, this will have some notational conveneience later).
\begin{lemma}\label{lem2}
Let $\xi=\frac{M}{M+\n_2}$ and $M,\n_2\to\infty$.
\begin{enumerate}[(a)]
\item
Let $\z=z_1+z_2+\cdots+z_k$ and $k=o(M)$ where $k\z=o(M+\n_2)$,
\begin{multline*}
\Pr(Z_1=z_1,Z_2=z_2,\cdots,Z_k=z_k\mid Z_1+Z_2+\cdots+Z_M=\n_2+M)\leq\\
\wPr(Z_1=z_1,Z_2=z_2,\cdots,Z_k=z_k)(1+\e)
=\xi^k(1-\xi)^{\z-k}(1+\e),
\end{multline*}
where
\beq{epsdef}
\e=\frac{3k\z}{\n_2+M}.
\eeq
\item
If $k\in\set{1,2}$ and $\z=z_1+\cdots+z_k=o(\n_2)$ then
$$\Pr(Z_i=z_i,\,i=1,\ldots,k\mid Z_1+Z_2+\cdots+Z_M=\n_2+M)=
\xi^k(1-\xi)^{\z-k}(1+\eta)$$
where
$$1+\eta=\brac{1+O\bfrac{\z^2M}{\n_2(\n_2+M)}
+ O\bfrac{\z}{\n_2+M}}.$$
\item
Let $\am=\frac{4(M+\n_2)\ln M}{M}=4\xi^{-1}\ln M$. Then
$$\Pr(\exists e:\;\ell_e\geq \am)=o(1).$$
\item Let $\bm=\rdup{\frac{M+\n_2}{M^2\ln M}}=\rdup{\frac{1}{\xi M\ln M}}$ and
suppose that $\n_2/M\ln M\to\infty$ then
$$\Pr(\exists e:\;\ell_e<\bm)=o(1).$$
\end{enumerate}
\end{lemma}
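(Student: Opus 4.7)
The plan is to compute the conditional probabilities in (a) and (b) directly as ratios of binomial coefficients, and then to deduce (c) and (d) via a union bound over the $M$ kernel edges (using Lemma~\ref{lem1} to identify the distribution of $(\ell_1,\ldots,\ell_M)$ with the conditional distribution of $(Z_1,\ldots,Z_M)$). For part (a), by independence of the $Z_i$ and the fact that $\sum_{j=k+1}^M Z_j$ is negative binomial with parameter $\xi$, a direct Bayes' rule computation gives
\[ \Pr(Z_i = z_i,\ i\leq k \mid \textstyle\sum_{j=1}^M Z_j = \n_2+M) = \frac{\binom{\n_2+M-\z-1}{M-k-1}}{\binom{\n_2+M-1}{M-1}}, \]
in which all powers of $\xi$ and $1-\xi$ cancel once $\xi=M/(M+\n_2)$ is substituted. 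Expanding both binomials as ratios of falling factorials, taking logarithms, and performing a second-order Taylor expansion of $\log(1-x)$ on each of the $k$ factors $(1-i/M)$, the $\z-k$ factors $(1-j/\n_2)$, and the $\z$ denominator factors $(1-i/(\n_2+M))$, the leading term is exactly $\log(\xi^k(1-\xi)^{\z-k})$, with quadratic corrections of sizes $-k^2/(2M)$, $-(\z-k)^2/(2\n_2)$, and $+\z^2/(2(\n_2+M))$ respectively.

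The key observation for part (a) is the cancellation that occurs when one writes $\z^2 = (\z-k)^2 + 2k(\z-k)+k^2$ and redistributes the positive $\z^2/(2(\n_2+M))$ correction: the diagonal $(\z-k)^2$ and $k^2$ pieces combine with the two negative numerator corrections to yield non-positive net contributions (using $1/(\n_2+M) \leq 1/\n_2$ and $1/(\n_2+M) \leq 1/M$), so that the only surviving positive contribution is the cross term $k(\z-k)/(\n_2+M) \leq k\z/(\n_2+M)$. Higher-order Taylor remainders are negligible under the hypothesis $k\z = o(\n_2+M)$, and together with the conversion $e^x \leq 1+(1+o(1))x$ (valid since $k\z/(\n_2+M)\to0$) they are absorbed into the factor of $3$ in \eqref{epsdef}. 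Part (b) uses the same expansion but with $k\in\{1,2\}$, which makes the $k^2/(2M)$ term negligible and allows one to retain the sign of all remaining corrections: combining $-\z^2/(2\n_2) + \z^2/(2(\n_2+M)) = -\z^2 M/(2\n_2(\n_2+M))$ yields the first stated error $O(\z^2 M/(\n_2(\n_2+M)))$, while the linear-in-$\z$ Taylor remainders from the denominator factors contribute the second term $O(\z/(\n_2+M))$.

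Parts (c) and (d) now follow by union bound over the $M$ kernel edges. For (c), invoking part (a) with $k=1$ and summing over $z_1 \geq \am$ gives a conditional tail bounded by $(1+o(1))\sum_{z\geq\am}\xi(1-\xi)^{z-1} = (1+o(1))(1-\xi)^{\am-1} \leq (1+o(1))e^{-\xi(\am-1)} = M^{-4+o(1)}$; the extreme tail where $z_1$ is so large that the hypothesis $k\z=o(\n_2+M)$ fails contributes a super-polynomially small piece handled separately via the trivial bound of the binomial ratio by $1$. Multiplying by $M$ yields $o(1)$. For (d), using part (b) with $k=1$ and summing over $z_1 < \bm$, one obtains $(1+o(1))(1-(1-\xi)^{\bm-1}) \leq (1+o(1))\xi\bm = O(1/(M\ln M))$, where the hypothesis $\n_2/(M\ln M)\to\infty$ guarantees that $\xi\bm$ has exactly this order; another union bound over the $M$ edges gives $o(1)$. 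The main technical obstacle throughout is the combinatorial cancellation in part (a): a naive per-factor bound only gives $\exp(\z^2/(\n_2+M))$, which is too weak to permit values of $\z$ as large as $\am \sim \xi^{-1}\ln M$ that parts (c) and later sections require.
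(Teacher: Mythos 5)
Your approach to parts (a) and (b), taking logarithms and Taylor expanding, is a genuinely different route from the paper's. The paper writes the binomial ratio as $\prod_{i=1}^k\frac{M-i}{\n_2+M-i}\cdot\prod_{i=1}^{\z-k}\frac{\n_2-i+1}{\n_2+M-k-i}$ and bounds it \emph{factor by factor}: the crucial offset of $k$ in the second product's denominator index guarantees that each factor is at most $(1-\xi)\bigl(1+\frac{(1+o(1))(k+1)}{\n_2+M}\bigr)$, so no cross-factor cancellation is needed at all. Your decomposition instead separates the falling factorials into independent products $\prod(1-i/M)$, $\prod(1-j/\n_2)$, $\prod(1-i/(\n_2+M))$, which requires more careful bookkeeping.

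That bookkeeping has a gap. Your $\z^2=(\z-k)^2+2k(\z-k)+k^2$ redistribution correctly handles the leading quadratic pieces, and the conclusion that the quadratic diagonals are non-positive is right; but the remaining contributions you dismiss as ``higher-order Taylor remainders'' include a genuinely \emph{linear-in-$\z$} term $\frac{\z-k}{2\n_2}$, coming from the discrepancy between $\sum_{j<\z-k}j=\frac{(\z-k)(\z-k-1)}{2}$ and $\frac{(\z-k)^2}{2}$. This is not a Taylor remainder and it is not absorbed into $\e=\frac{3k\z}{\n_2+M}$ on its own: in the regime $\n_2\ll M$ (allowed in Case~(c1), $\n_2\geq M^{1-o(1)}$) one can have, say, $k=1$, $\z=2$, $\n_2=M^{1-\d}$, giving $\frac{\z-k}{2\n_2}\sim\tfrac12M^{-1+\d}$ while $\e\sim 6M^{-1}$, so the leftover term exceeds $\e$ by a factor $\sim M^\d$. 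What saves the argument is exactly the negative diagonal you discarded: $\frac{\z-k}{2\n_2}-\frac{(\z-k)^2M}{2\n_2(\n_2+M)}=\frac{\z-k}{2\n_2}\bigl(1-(\z-k)\xi\bigr)\leq\frac{k\z}{2(\n_2+M)}$ in all regimes. So the claim ``the only surviving positive contribution is the cross term'' is not correct as stated; you must carry the $-\frac{(\z-k)^2M}{2\n_2(\n_2+M)}$ piece forward and pair it against the linear offset. Parts (c) and (d) are fine in spirit and route, though the paper's (c) proceeds directly from the exact ratio in \eqref{nose} rather than through part (a).
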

\begin{proof}[\textbf{\emph{Proof}}]
(a)
Observe that
\begin{align}
&\Pr(Z_1=z_1,Z_2=z_2,\cdots,Z_k=z_k\mid Z_1+ Z_{k+2}+\cdots+Z_M=\n_2+M)\nonumber\\
&=\frac{\Pr((Z_1=z_1,Z_2=z_2,\cdots,Z_k=z_k)\wedge
(Z_{k+1}+Z_2+\cdots+Z_M=\n_2+M-\z)}{\Pr(Z_1+ Z_{k+2}+\cdots+Z_M=\n_2+M)}\nonumber\\
&=\frac{\Pr(Z_1=z_1,Z_2=z_2,\cdots,Z_k=z_k)\Pr(Z_{k+1}+ Z_{k+2}+\cdots+Z_M=\n_2+M-\z)}
{\Pr(Z_1+Z_2+\cdots+Z_M=\n_2+M)}
&=\frac{\binom{\n_2+M-\z-1}{M-k-1}}{\binom{\n_2+M-1}{M-1}},\label{nose}
\end{align}
which, since $\z\geq k$, equals
\begin{align}
&\prod_{i=1}^k\frac{M-i}{\n_2+M-i}\times\prod_{i=1}^{\z-k}
\frac{\n_2-i+1}{\n_2+M-k-i}\label{eq2}\\
&\leq \xi^k\prod_{i=1}^{\z-k}
\frac{\n_2-i+1}{\n_2+M-k-i}
= \xi^k(1-\xi)^{\z-k}\prod_{i=1}^{\z-k}
\brac{1+\frac{(k+1)\n_2-(i-1)M}{(\n_2+M-k-i)\n_2}}
\nonumber\\
&\leq \xi^k(1-\xi)^{\z-k}\brac{1+\frac{(1+o(1))(k+1)}{\n_2+M}}^{\z-k}\label{830}\\
&\leq\xi^k(1-\xi)^{\z-k}(1+\e).\nonumber
\end{align}
(b) Going back to \eqref{eq2} with $k=2$ we use
$$\prod_{i=1}^k\frac{M-i}{\n_2+M-i}=\xi^k\brac{1+O\bfrac{1}{\n_2+M}}$$
and
\begin{align*}
&\prod_{i=1}^{\z-k}\frac{\n_2-i+1}{\n_2+M-k-i}\\
&=\frac{\n_2(\n_2-1)\cdots (\n_2-k)}
{(\n_2+M-\z+k)\cdots(\n_2+M-\z+1)(\n_2+M-\z)}\times\prod_{j=k+1}^{\z-k-1}
\frac{\n_2-j}{\n_2+M-j}\\
&=\brac{1+ O\bfrac{\z}{\n_2+M}}\times (1-\xi)^{\z-k}\times \prod_{j=k+1}^{\z-k-1}
\brac{1-\frac{jM}{\n_2(\n_2+M)}+O\bfrac{j^2M}{\n_2(\n_2+M)^2}}\\
&=(1-\xi)^{\z-k}\times \brac{1+O\bfrac{\z^2M}{\n_2(\n_2+M)}+ O\bfrac{\z}{\n_2+M}}.
\end{align*}
(c) It follows from \eqref{nose} with $k=1$ that
\begin{align}
\Pr(\exists e:\;\ell_e\geq \am)&\leq M\sum_{\z=\ell_{\max}}^{\n_2}
\frac{\binom{M+\n_2-\z-1}{M-2}}{\binom{M+\n_2-1}{M-1}}\nonumber\\
&\leq \frac{2M^2}{\n_2}\sum_{\z=\ell_{\max}}^{\n_2}
\brac{1-\frac{\z}{M+\n_2-1}}^{M-2}\nonumber\\
&\leq \frac{2M^2}{\n_2}\sum_{\z=\ell_{\max}}^{\n_2}
\exp\set{-\frac{(M-2)\z}{M+\n_2-1}}\nonumber\\
&\leq \frac{2M^2}{\n_2}\cdot \exp\set{-\frac{(M-2)\am}{M+\n_2-1}}
\frac{1}{1-e^{-(M-2)/(M+\n_2-1)}}\label{lmax}\\
&\leq \frac{2M^2}{\n_2}\cdot \frac{2}{M^4}\cdot \frac{2(M+\n_2)}{M}\nonumber\\
&=o(1).\nonumber
\end{align}
(d) It follows from (a) with $k=1$ and $\z<\bm$ that
\begin{equation*}
\Pr(\exists e:\;\ell_e< \bm)\leq 2M\bm\xi=o(1).\qedhere
\end{equation*}
\end{proof}
\subsection{Tree like vertices}\label{treelike}
Let a vertex $x$ of $\tKer$ be {\em locally tree like}
if its $\tKer$-neighborhood up to depth
\beq{D0}
L_0=\d_0\ln N
\eeq
contains no cycles.

Here
\beq{delta0}
\d_0\gg\z_0\gg\frac{\ln\ln N}{\ln N}
\eeq
where $\z_0$ is as in \eqref{zeta0}.

A vertex of $\tTC$ is locally tree like
if it lies on a path $P_e$ where $e=(v,w)$
and $v,w$ are both locally tree like.
An edge of $\tTC$ is locally tree like if both of its
endpoints are locally tree like.

\begin{lemma}\label{non}
With $L_0$ as defined in \eqref{D0} we have that for the graph $\tKer$:
\begin{description}
\item[(a)] W.h.p.\ there are at most
$N^{10\d_0\ln a_0}$ non locally tree like vertices,  where $a_0$ is as in \eqref{nice-c-kernel}.
\item[(b)] W.h.p.\ there is at most
one cycle contained in the $(2L_0)$-neighborhood of any
vertex.
\end{description}
\end{lemma}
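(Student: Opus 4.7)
The plan is to run, for each vertex $v$ of $\tKer$, a BFS exploration in the configuration model on $\hW$ (justified by Lemma~\ref{cl1}) and to apply the principle of deferred decisions: at each exploration step we pair an unmatched configuration point with a uniform partner among the remaining unmatched points, and the event that the partner falls in the set of already-revealed points (thereby closing a cycle of the BFS tree) has conditional probability at most $|B|/(2M - O(|B|))$, where $|B|$ denotes the current ball size. Writing $S_v$ for the final size of the $L_0$-neighborhood of $v$ in $\tKer$, the expected number of cycle-closures inside it is thus bounded by $\E[S_v^2]/M$, which in turn bounds the probability $q_v$ that $v$ is not locally tree-like.

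The next step is to bound $\E[S_v^2]$. The configuration-model BFS is stochastically dominated by a size-biased Galton--Watson tree with offspring mean $\mu = (D_2 - D_1)/D_1 \leq D_2/D_1$. Since every kernel degree is at least $3$ we have $j^2 \leq j^3/3$, hence $D_2 \leq D_3/3 \leq a_0 M/3$ by~\eqref{nice-c-kernel}, giving $\mu \leq a_0/6$; the root contributes $d_v \leq N^{\z_0}$ by~\eqref{zeta0}, and every individual offspring count is bounded by $N^{\z_0}$. Standard branching-process estimates then yield $\E[S_v] \leq N^{\z_0}\mu^{L_0}$ and $\E[S_v^2] \leq N^{2\z_0 + 2\d_0 \ln a_0 + o(1)}$, using $\mu^{L_0} \leq N^{\d_0 \ln a_0}$. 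Combining with the first paragraph gives $q_v \leq N^{2\z_0 + 2\d_0 \ln a_0 - 1 + o(1)}$; summing over the $\leq N$ vertices of $\tKer$, applying Markov's inequality, and using $\z_0 = o(\d_0)$ from~\eqref{delta0}, delivers the bound $N^{10\d_0 \ln a_0}$ claimed in~(a) w.h.p.

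For part~(b) I would repeat the same exploration to depth $2L_0$ and bound the probability that $B_{2L_0}(v)$ contains two or more cycles. The expected number of ordered pairs of cycle-closing revelations in the BFS is at most $\E[S_v^4]/M^2$, with $S_v := |B_{2L_0}(v)|$; the branching-process moment estimates (carried out to depth $2L_0$) give $\E[S_v^4] \leq N^{4\z_0 + 4\d_0 \ln a_0 + o(1)}$. Thus the probability for any fixed $v$ is $\leq N^{4\z_0 + 4\d_0 \ln a_0 - 2 + o(1)}$, and a union bound together with Markov yields $o(1)$ provided $10\d_0 \ln a_0 < 1$, which is already implicit for the bound in~(a) to be non-trivial.

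The main technical obstacle is controlling the higher moments $\E[S_v^k]$ in the presence of a few vertices whose degree may reach $N^{\z_0}$: a crude application of the expectation bound is insufficient because $S_v$ could in principle be inflated by a rare atypically-branching event deeper in the BFS. I would handle this by a layer-wise concentration argument, truncating the exploration as soon as $|B|$ exceeds $N^{\z_0 + \d_0 \ln a_0 + \eta}$ for a small $\eta = o(1)$, and showing via Azuma/Chernoff (using the bounded per-step offspring $N^{\z_0}$) that this truncation is triggered with probability $N^{-\omega(1)}$, so its contribution is absorbed into the overall $o(1)$ error in both~(a) and~(b).
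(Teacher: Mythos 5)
Your plan is a genuinely different route from the paper's: you explore from each vertex via BFS in the configuration model and bound the probability of a cycle-closing step using moments of the ball size, whereas the paper (proof of Lemma~\ref{non}) performs a direct first-moment count over explicit configurations of a short cycle plus an attaching path, using the factorisation
\[
\sum_{l,k}\sum_{v_1,\ldots,v_k,w_1,\ldots,w_l}\deg(v_1)\prod_i\frac{\deg(v_i)^2}{M}\prod_j\frac{\deg(w_j)^2}{M}
\leq \sum_{l,k}\frac{D_3}{M}\Bigl(\frac{D_2}{M}\Bigr)^{k+l-1}\leq \sum_{l,k}a_0^{k+l},
\]
which, with $L_0=\d_0\ln N$, gives $N^{5\d_0\ln a_0}$ directly and Markov finishes (a); (b) is done by the same kind of count for a path with two extra chords. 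The key advantage of the paper's route is that it only needs the combination $D_3/M\leq a_0$ once (so only the third moment condition~\eqref{nice-c-kernel}), with no concentration or moment-of-ball-size machinery at all.

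Your version is workable in spirit but has two points that need care. First, the conditional probability that a newly revealed half-edge closes a cycle is controlled by the number of dangling half-edges in the revealed ball, i.e.\ roughly $\deg(B)$ rather than $|B|$; since $\deg(B)\leq N^{\z_0}|B|$ this only costs an extra $N^{O(\z_0)}$ (harmless because $\z_0=o(\d_0)$), but the estimate as you wrote it is not literally correct. Second, the branching-process second-moment bound for part (a) does go through because the size-biased offspring has second moment $\E[(d-1)^2]\leq D_3/D_1\leq a_0/2$, which is exactly what~\eqref{nice-c-kernel} gives; however, for part (b) you need fourth moments of the ball size, which forces you into offspring moments ($D_4,D_5$) not controlled by~\eqref{nice-c-kernel} and only polynomially bounded via the max-degree constraint~\eqref{zeta0}. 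This is the real reason your truncation/Azuma appendage shows up, and it is precisely the complication the paper's direct configuration count avoids. So your approach is a valid alternative, but heavier: you trade the paper's clean one-line factorisation for BFS exploration, domination by a Galton--Watson tree (which itself requires a word on depletion), moment recursions to depth $2L_0$, and a concentration step, all of which the paper never needs.
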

\begin{proof}[\textbf{\emph{Proof}}]
(a) The expected number of
vertices that are within distance $2L_0$ of
a cycle of length at most $2L_0$ in the graph $\tKer$ can be bounded from
above by
\begin{multline}\label{pa}
\sum_{l=0}^{2L_0}\sum_{k=3}^{2L_0}
\sum_{\substack{v_1,\ldots,v_k\\w_1,\ldots,w_l}}\ \deg(v_1)
\prod_{i=1}^k\frac{\deg(v_i)^2}{M}\prod_{j=1}^l\frac{\deg(w_j)^2}{M}\leq
\sum_{l=0}^{2L_0}\sum_{k=3}^{2L_0}\frac{D_3}{M}\bfrac{D_2}{M}^{k+l-1}\\
\leq \sum_{l=0}^{2L_0}\sum_{k=3}^{2L_0}
a_0^{k+l}\leq N^{5\d_0\ln a_0}.
\end{multline}
where
$$\deg(v)\text{ denotes the degree of vertex $v\in V$ in the graph $G_F$.}$$

Markov's inequality implies that there are fewer than $N^{10\d_0\ln a_0}$
such vertices w.h.p.

{\bf Explanation of \eqref{pa}:} We choose
$v_1,v_2,\ldots,v_k$ as the vertices of the cycle
and $w_1,w_2,\ldots,w_l$ as the vertices of
a path joining the cycle at $v_1$. The probability that
the implied edges exist in $K_F$ can be bounded by
\begin{multline*}
\frac{\deg(v_1)\deg(v_2)}{2M-1}\cdot \frac{(\deg(v_2)-1)\deg(v_3)}{2M-3}\cdots
\frac{(\deg(v_k)-1)(\deg(v_1)-1)}{2M-2k+1}\cdot\\
\frac{(\deg(v_1)-2)\deg(w_1)}{2M-2k-1}\cdot \frac{(\deg(w_1)-1)\deg(w_2)}{2M-2k-3}
\cdots \frac{(\deg(w_{k-1})-1)\deg(w_k)}{2M-2l-2k+1}
\end{multline*}
(b) If the condition in (b) fails then there
exist two small cycles that are close together.
More precisely,
there exists a path $P=(v_1,v_2,\ldots,v_k)$ where $k\leq 5L_0$ plus
two additional edges $(v_1,v_i)$ and $(v_k,v_j)$ where
$1<i,j<k$. The probability that such a path exists can be bounded by
\begin{multline}\label{pb}
\sum_{k=4}^{5L_0}\sum_{1<i,j<k}\sum_{\substack{v_1,\ldots,v_k}}\
\frac{\deg(v_1)\deg(v_i)}{M}\cdot \frac{\deg(v_k)\deg(v_j)}{M}\cdot
\prod_{l=1}^k\frac{\deg(v_l)^2}{M}\leq
\sum_{k=4}^{5L_0}\frac{k^2D_3^2D_2^{k-1}}{M^{k+2}}\\
=O(N^{o(1)-1})=o(1).
\end{multline}
Part (b) follows.
\end{proof}
\subsection{Conductance}\label{conductance1}
Given a  connected graph $G=(V,E)$ let
$\p(v)=\frac{\deg(v)}{2|E|}$ denote the steady state probability of being at $v$.
The {\em conductance} $\F(G)$  of a random walk
$\cW_u$ on $G$ is defined by
\beq{conductance}
\F(G)=\min_{S:\p(S)\leq 1/2}\F(S)\text{ where }\F(S)=\frac{|\partial S|}{\deg(S)}
\eeq
and where $\deg(S)=\sum_{v\in S}\deg(v)$ and
$\p(S)=\sum_{v\in S}\p(v)$ and \(\partial S\)
denotes the set of
edges with one endpoint in \(S\) and the other not in \(S\).
(We consider the conductance of random walks
on edge-weighted graphs in Section~\ref{surro}).

The following lemma follows directly from Lemma~10 of \cite{ACF}.
\begin{lemma}\label{condK}
Let $\textbf{d}$ be a nice degree sequence.
Let $F$ be chosen uniformly as in
Section~\ref{config}.
Let $K_F$ be the kernel of the associated configuration multi-graph.
Then with probability $1-o(n^{-1/9})$,
 \[
 \F(K_F) \ge \frac{1}{100}.
 \]
 \end{lemma}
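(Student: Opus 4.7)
The plan is to reduce directly to Lemma 10 of \cite{ACF}, which establishes a conductance lower bound of $1/100$ with the claimed failure probability for configuration-model multigraphs whose degree sequence satisfies the niceness conditions of that paper. The essential observation is that, by Lemma~\ref{cl1}, the kernel $K_F$ is itself distributed as a uniformly random configuration multigraph on the point set $\wh W = W_{\nu_2+1}\cup\cdots\cup W_n$, i.e., on the restricted degree sequence $\bd_3=(d_i)_{d_i\geq 3}$. So no further probabilistic argument is needed beyond verifying that $\bd_3$ meets the hypotheses of the earlier lemma.

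The verification is immediate from Definition~\ref{def-nice}. The number of kernel vertices is $N\to\infty$ by \eqref{nice-a-kernel}; the minimum degree is at least $3$ by construction of $\bd_3$; the maximum degree is at most $N^{\zeta_0}$ with $\zeta_0=o(1)$ by \eqref{zeta0}; the third-moment condition $\sum_{j\geq 3} j^3\nu_j\leq a_0 M$ is exactly \eqref{nice-c-kernel}; and the minimum-degree class has linear size $\nu_d\geq \alpha N$ by \eqref{nice-d-kernel}. Together with \eqref{JJ}, which gives $2M/N=O(1)$, these match the assumptions under which Lemma 10 of \cite{ACF} is stated.

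Having checked the hypotheses, applying Lemma 10 of \cite{ACF} to $K_F$ yields $\Phi(K_F)\geq 1/100$ with probability $1-o(n^{-1/9})$, which is the claim. The main conceptual point (already handled by Lemma~\ref{cl1}) is recognizing that removing the degree-2 vertices does not disturb the configuration-model structure of what remains; there is no genuine obstacle here, since conductance is a property of the kernel alone and the degree-2 subdivisions affect only the later analysis of $\Gd$. If one wished to reproduce rather than cite the argument, the standard route would be a union bound over cuts $S$ with $\pi(S)\leq 1/2$, estimating $|\partial S|$ in the configuration model by the pairing heuristic: the expected number of half-edges from $S$ that pair outside $S$ is at least $\deg(S)(1-\deg(S)/(2M))$, and concentration combined with the third-moment bound \eqref{nice-c-kernel} controls the tails uniformly over $S$; this is exactly the computation carried out in \cite{ACF}.
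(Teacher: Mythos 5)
Your proposal matches the paper's approach exactly: the paper simply states that the lemma follows directly from Lemma~10 of \cite{ACF}, relying implicitly on Lemma~\ref{cl1} to identify $K_F$ as a configuration-model multigraph on $\bd_3$. Your additional verification that the niceness conditions on $\bd$ transfer to $\bd_3$ is correct and fills in detail the paper leaves to the reader.
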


Note that $ \F(K_F) \ge 0.01$ implies that $K_F$ and hence $G_F$ is connected.
Using \eqref{psimple} we see that the
probability that $\Gd$ is not connected is $o(n^{-1/9})=o(1)$.

We will now estimate the conductance of $G_F$
using Lemmas~\ref{lem2} (Part~(c)) and~\ref{condK}.
\begin{lemma}\label{lem3}
 Let $\textbf{d}$ be a nice degree sequence.
Let $F$ be chosen uniformly as in
Section \ref{config}.
Let $G_F$ be the associated configuration multi-graph.
Then with probability $1-o(n^{-1/9})$,
 \[
 \F(G_F)=\Omega\bfrac{\xi}{\ln M}.
 \]
\end{lemma}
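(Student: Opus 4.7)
The plan is to leverage the kernel conductance bound $\F(K_F) \geq 1/100$ from Lemma \ref{condK} together with the w.h.p.\ uniform upper bound $\ell_e \leq \am = 4\xi^{-1}\ln M$ on path lengths from Lemma \ref{lem2}(c). Fix any $S \subseteq V(G_F)$ with $\pi(S) \leq 1/2$, and write $S_K$ for the kernel vertices of $S$ (those of degree $\geq 3$) and $S_2 = S \cap V_2$ for its degree-two vertices, so that
\[
\deg_{G_F}(S) = \deg_{K_F}(S_K) + 2|S_2|
\]
since kernel vertices retain their degrees under path-subdivision. The argument splits on whether $\deg_{K_F}(S_K) \leq M$ (so that $S_K$ is the smaller side in $K_F$) or not.

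Suppose first that $\deg_{K_F}(S_K) \leq M$. Then Lemma \ref{condK} yields $|\partial_{K_F} S_K| \geq \tfrac{1}{100}\deg_{K_F}(S_K)$, and every kernel edge $e$ with exactly one endpoint in $S_K$ forces its path $P_e$ to contain at least one edge of $\partial_{G_F} S$ (one endpoint of $P_e$ lies in $S$, the other does not). Next let $A = \{e \in E(K_F) : \text{both endpoints of } e \text{ lie outside } S_K \text{ but } P_e \cap S \neq \emptyset\}$; each such path begins and ends outside $S$, so it contributes at least two edges to $\partial_{G_F} S$. These two edge families are disjoint as they come from distinct kernel edges, giving
\[
|\partial_{G_F} S| \;\geq\; \tfrac{1}{100}\deg_{K_F}(S_K) + 2|A|.
\]
For the denominator, every vertex of $S_2$ lies on some path $P_e$ of length $\leq \am$, with $e$ either incident to $S_K$ (and there are at most $\deg_{K_F}(S_K)$ such edges) or in $A$; hence $|S_2| \leq \am(\deg_{K_F}(S_K) + |A|)$ and $\deg_{G_F}(S) \leq 3\am(\deg_{K_F}(S_K) + |A|)$. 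Combining,
\[
\F_{G_F}(S) \;\geq\; \frac{\tfrac{1}{100}\deg_{K_F}(S_K) + 2|A|}{3\am(\deg_{K_F}(S_K) + |A|)} \;\geq\; \frac{1}{300\,\am} \;=\; \Omega\!\bfrac{\xi}{\ln M}.
\]

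For the remaining case $\deg_{K_F}(S_K) > M$, we have $\deg_{K_F}(V(K_F) \setminus S_K) < M$, so the previous argument applies to $\bar S = V(G_F) \setminus S$ and yields $|\partial_{G_F} \bar S| = \Omega(\xi/\ln M) \cdot \deg_{G_F}(\bar S)$; since $\partial_{G_F} S = \partial_{G_F} \bar S$ and $\deg_{G_F}(\bar S) \geq m \geq \deg_{G_F}(S)$, the same bound transfers to $S$. The main obstacle is the volume accounting in the first case: charging degree-two vertices of $S$ back to $\deg_{K_F}(S_K) + |A|$ loses a factor of $\am$, which is exactly what converts the $\Omega(1)$ kernel conductance into the claimed $\Omega(\xi/\ln M)$, and this step is precisely where the uniform path length bound from Lemma \ref{lem2}(c) is essential.
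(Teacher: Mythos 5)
Your proof is correct, and it rests on the same two ingredients the paper uses: the kernel conductance bound $\F(K_F)\ge 1/100$ from Lemma~\ref{condK}, and the w.h.p.\ uniform path-length bound $\ell_e\le\am$ from Lemma~\ref{lem2}(c), which is exactly where the factor $\xi/\ln M$ is lost. Where you differ is in the bookkeeping around an arbitrary cut $S$. The paper first restricts to connected $S$ and then performs a \emph{conforming-set} reduction: it argues one may replace $S$ by a set $S'$ that, for each kernel edge $e$, contains either all or none of the internal vertices of $P_e$, and this monotonically decreases $\F$; it then has to account for the possibility that the replacement pushes $\pi(S')$ past $1/2$, which is the source of the two-term minimum in \eqref{PhiG} and the $+2\am$ correction, and finally it invokes $\deg(S)\le\am\,\deg(S\cap V(K_F))$ (which tacitly assumes $S$ meets the kernel; the degenerate case of an $S$ entirely interior to one path is handled separately by connectedness). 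You instead leave $S$ untouched and charge $\partial_{G_F}S$ directly: boundary kernel edges of $S_K$ each give one crossing, and the ``stray'' paths in your set $A$ each give two, while every degree-two vertex of $S$ is charged to one of these same kernel edges via the $\am$ bound. This avoids the worry about $\pi$ drifting past $1/2$ during a modification step; the only residual case split you need is whether $\deg_{K_F}(S_K)\le M$ so Lemma~\ref{condK} applies to $S_K$ or to its complement, which you handle correctly via $\partial S=\partial\bar S$ and $\deg(\bar S)\ge\deg(S)$. The two arguments give the same constant up to bookkeeping; yours is a bit more explicit and, I'd say, easier to check, while the paper's conforming-set trick is the one it reuses later (e.g.\ right after \eqref{sweat}).
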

\begin{proof}[\textbf{\emph{Proof}}]
Consider a set $S\subseteq [n]$ that induces a connected subgraph of $G_F$.
We can restrict our attention to such sets.
Suppose $S$ only contains part of some path $P_e$.
To be specific, suppose $P_e=(v,u_1,\ldots,u_k,w)$ where $v,w$ are of degree
three or more and $u_1,u_2,\ldots,u_k$ are of degree two. $k=1$ is allowed here.
Assume that $v\in S$. Then we wish to eliminate the case where $u_1,u_2,\ldots,
u_l\in S$ and $u_{l+1}\notin S$ where $l<k$.
If we add an edge of $P_e$
that is not contained in $S$ to create $S'$
then $\deg(S')>\deg(S)$ and $|\partial S'|\leq |\partial S|$. Let $S$ {\em conform}
with the kernel if for all $e\in K_F$
we have either (i) $S$ contains all internal vertices of $P_e$ or (ii)  $S$ contains
no internal vertices of $P_e$. Then w.h.p.
\beq{PhiG}
\F(G_F)\geq \min\set{\min_{\substack{\p(S)\leq 1/2\\ S\text{ conforms with }K_F}}
\frac{|\partial S|}{\deg(S)},
\min_{\substack{1/2-\am/m\leq \p(S)\leq 1/2\\ S\text{ conforms with }K_F}}
\frac{|\partial S|}{\deg(S)+2\am}}.
\eeq
The lemma now follows from $\am=o(m)$ and $\deg(S)\leq \am \deg(S\cap V(K_F))$.
\end{proof}

We note  a result from  Jerrum and Sinclair \cite{Sin}, that
\begin{equation}\label{mix}
|P_{u}^{(t)}(x)-\pi_x| \leq (\p_x/\p_u)^{1/2}(1-\Phi^2/2)^t.
\end{equation}

There is a technical point here.
The result \eqref{mix}
assumes that the walk is lazy.
A  lazy walk  moves
to a neighbour with probability 1/2 at any step.
This assumption halves the
conductance. Asymptotically, the cover time is also doubled.
Otherwise,  the lazy assumption has a negligible effect on the
analysis, see Remark~\ref{remlazy}. We will ignore this
assumption for the rest of the paper; and
continue as though there are no lazy steps.

\section{Estimating first visit probabilities}\label{1stvisit}
In this section $G$ denotes a fixed  connected graph with $\n$ vertices and $\m$ edges.
A random walk $\cW_{u}$  is started from a vertex $u$.
Let $\cW_{u}(t)$ be the vertex
reached at step $t$, let $P$ be the matrix of transition probabilities of the walk and let
$P_{u}^{(t)}(v)=\Pr(\cW_{u}(t)=v)$. We assume
that the random walk $\cW_{u}$ on $G$ is ergodic with
 stationary distribution $\pi$, where $\pi_v=\deg(v)/(2\m)$,
and $\deg(v)$ is the degree of vertex
$v$.

Let
\beq{pwd}
d(t)=\max_{u,x\in V}|P_{u}^{(t)}(x)-\pi_x|,
\eeq
and
let $\Tm$ be a positive integer such that for $t\geq \Tm$
\begin{equation}\label{4}
\max_{u,x\in V}|P_{u}^{(t)}(x)-\pi_x| \leq \n^{-10} .
\end{equation}

Consider the  walk $\cW_v$, starting
at {vertex} $v$. Let $r_t={r_t(v)=}\Pr(\cW_v(t)=v)$ be the probability  that this  walk
returns to $v$ at step $t = 0,1,...$\ .
Let
\begin{equation}
\label{Qs}
 R_{\Tm}(z)=\sum_{j=0}^{\Tm-1} r_jz^j
 \end{equation}
and
let
$$R_v= R_{\Tm}(1).$$

A proof of the following lemma can be found in \cite{CFgiant}.
\begin{lemma}\label{MainLemma}
Let $G=(V,E)$ and let $u,v\in V$ be fixed and let $T = \Tm(G)$. Suppose that
\begin{align}
  T \p_v &=o(1)\,,  \label{lem-RtAv-assumption1}\\
  \min_{|z|= 1+\l}| R_{\Tm}(z)|&\geq \th\,\qquad\text{ for some constant }\th>0.
   \label{lem-RtAv-assumption2}
\end{align}
Then there exists a constant $K$ and values
$\psi_1,\psi_2=O(T\p_v)$ such that if
\begin{equation}
\label{lamby}
\l=\frac{1}{ K\Tm }.
\end{equation}
and
\begin{equation}\label{mm5}
p_v=\frac{\p_v}{R_v(1+\psi_1)}\,.
\end{equation}
then for all $t\geq T$,
\begin{equation}
\label{frat}
\Pr_u(\cA_t(v))=\frac{1+\psi_2}{(1+p_v)^{t}} +O(T\p_v e^{-\l t/2})\,.
\end{equation}
where $\cA_t(v)$ is defined in \eqref{aCv}.
\end{lemma}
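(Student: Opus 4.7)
The plan is to use a generating-function / renewal argument standard in the Cooper-Frieze toolkit. Let $f_t(v)$ be the probability that $\cW_v$ returns to $v$ for the first time at step $t\geq 1$, and put $F_v(z)=\sum_{t\geq1}f_t(v)z^t$. The classical renewal identity gives $R(z):=\sum_{t\geq0}r_t z^t = 1/(1-F_v(z))$. Analogously, writing $h_t(u,v)$ for the probability that $\cW_u$ hits $v$ for the first time at step $t$ and $H_{u,v}(z)=\sum_{t\geq0}h_t(u,v)z^t$, the first-passage decomposition yields $P_{u,v}(z)=H_{u,v}(z)R(z)+[u=v]$ where $P_{u,v}(z)=\sum_t P_{u}^{(t)}(v)z^t$. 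Because the mixing bound \eqref{4} forces $P_{u}^{(t)}(v)=\p_v+O(\n^{-10})$ for $t\geq T$, the behaviour of $(1-z)P_{u,v}(z)$ near $z=1$ is controlled, and from it one reads off the behaviour of $H_{u,v}$ and $F_v$ near $1$.

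First I would derive the dominant behaviour of $\Pr_u(\cA_t(v))$ by iterating the Markov property at multiples of $T$. Conditional on having avoided $v$ up to step $s\geq T$, the distribution at time $s$ is $O(\n^{-10})$-close to $\p$ restricted to $V\sm\{v\}$. Hence each additional block of $T$ steps introduces a multiplicative factor close to $1-q_v$, where $q_v$ is the probability, starting from near-stationarity, of visiting $v$ within the next $T$ steps. A counting heuristic --- the expected number of visits to $v$ in $T$ stationary steps is $T\p_v$, and each visit initiates an excursion contributing on average $R_v$ returns --- identifies $q_v\sim T\p_v/R_v$, motivating the target $p_v=\p_v/(R_v(1+\psi_1))$.

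To pin down constants I would pass to generating functions. The series
\[
\Psi(z)=\sum_{t\geq T}\Pr_u(\cA_t(v))\,z^t
\]
can be expressed rationally in $H_{u,v}(z)$ and $R_{\Tm}(z)$, after replacing the full $R(z)$ by its truncation (the tail $\sum_{t\geq\Tm}r_tz^t$ is, by \eqref{4}, close to the stationary contribution $\p_v z^T/(1-z)$ and contributes only $O(T\p_v)$ to $R_v$ on the relevant disc). The dominant singularity of $\Psi$ inside $|z|\leq 1+\l$ is the unique simple real root $z_0$ of $1-F_{\Tm}(z)=0$ close to $1$, and a local expansion gives $z_0=1+p_v+O(p_v^2)$ with $p_v$ as in \eqref{mm5}. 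The residue at $z_0$ produces the main term $(1+\psi_2)(1+p_v)^{-t}$, while shifting the contour out to $|z|=1+\l$ with $\l=1/(K\Tm)$ yields the error $O(T\p_v e^{-\l t/2})$ of \eqref{frat}.

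The main obstacle will be the contour shift and the bookkeeping of error terms. On the circle $|z|=1+\l$ I must keep $R_{\Tm}(z)$ bounded away from zero --- this is precisely what hypothesis \eqref{lem-RtAv-assumption2} supplies, and the choice $\l=1/(K\Tm)$ in \eqref{lamby} is calibrated so that $|z|^{\Tm}=O(1)$ on the shifted contour, letting me compare $R_{\Tm}$ and its tail uniformly. The passage from the block-decomposition heuristic to the generating-function identity must also absorb the mixing error $\n^{-10}$ and the truncation error $O(T\p_v)$; hypothesis \eqref{lem-RtAv-assumption1} that $T\p_v=o(1)$ is exactly what is needed for these to aggregate into the advertised $\psi_1,\psi_2=O(T\p_v)$ and for the local root $z_0$ to lie strictly inside the shifted contour.
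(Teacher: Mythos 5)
The paper does not give its own proof of Lemma~\ref{MainLemma}; it cites \cite{CFgiant}, where the lemma is established by exactly the renewal/generating-function argument you describe: truncate $R(z)$ to $R_{\Tm}(z)$ using the mixing bound \eqref{4}, locate the dominant real zero $z_0=1+p_v+O(p_v^2)$ of $1-F_{\Tm}(z)$, read off the main term $(1+\psi_2)(1+p_v)^{-t}$ from the residue at $z_0$, and shift the contour to $|z|=1+\l$ with $\l=1/(K\Tm)$ (so $|z|^{\Tm}=O(1)$), where hypothesis \eqref{lem-RtAv-assumption2} keeps $R_{\Tm}(z)$ bounded away from zero, yielding the $O(T\p_v e^{-\l t/2})$ error. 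Your sketch follows the same route and correctly identifies how \eqref{lem-RtAv-assumption1} and \eqref{lem-RtAv-assumption2} enter, so it is consistent with the reference proof.
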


\begin{Remark}\label{remlazy}
One effect of making the walk lazy is to (asymptotically)
double $R_v$. Later in the
analysis, this would double our upper bound on the
cover time, as it should. Thus it is
legitimate to ignore this technicality required for \eqref{mix}.
\end{Remark}

Using Lemma~\ref{lem3} and~\eqref{mix} we see that we can take
\beq{Tmix}
\Tm(G_F)=\frac{\ln^4M}{\xi^2}.
\eeq
This is a little larger than one might expect
at this stage. We will explain why later.

Lemma~\ref{MainLemma} is our main tool for proving
upper bounds on the cover time.
\section{Upper bounds}
To begin our analysis we let $G=(V,E)$ be a graph
 with $\n=|V|$ and $|E|=O(\n)$. Assume that $\Tm=\Tm(G)\leq \n$.
Let
$$ \t_u(G,\t)=\min\set{t\geq \t:\cW_u\text{ visits
every vertex of $G$ at least once in the
interval }[\t,t]}.$$
Let $U_t$ be
the number of vertices of $G$ which have not been visited by
$\cW_u$ during steps $[\Tm,t]$.  The following holds:
\begin{align}
&\Tc(G,u)\leq\E_u(\tau_c(G,\Tm))\nonumber\\
&\leq  \Tm+\sum_{t \geq \Tm} \Pr_u(\tau_c(G,\Tm) \geq t)\,,
\nonumber\\
 &=\Tm+\sum_{t \geq \Tm} \sum_{w\in V}
\Pr_w( \t_u(G,0)\geq t-\Tm)\Pr_u(\cW_u(\Tm)=w)\nonumber\\
&\leq \Tm+\sum_{t \geq \Tm} \sum_{w\in V}
\p_w\Pr_w( \t_u(G,0)\geq t-\Tm)+E_1\nonumber\\
&\leq 2\Tm+\sum_{t \geq 2\Tm} \sum_{w\in V}
\p_w\Pr_w( \t_u(G,\Tm)\geq t-\Tm)+E_1\nonumber\\
&= 2\Tm+\sum_{t \geq \Tm} \sum_{w\in V}
\p_w\Pr_w( \t_u(G,\Tm)\geq t)+E_1\label{ETG}
\end{align}
where
\begin{multline}\label{TG1}
E_1=\n^{-10}\sum_{t\geq \Tm}\sum_{w\in V}\Pr_w( \t_u(G,0)\geq t-\Tm)\leq \n^{-3}+
\sum_{ t\geq \n^6}\sum_{w\in V}\Pr_w( \t_u(G,0)\geq \n^4)\leq\\
 \n^{-3}+\sum_{ t\geq \n^6}\sum_{w\in V}\brac{1-(\p_w-\n^{-10})}^{t/\Tm}
\leq  \n^{-3}+\sum_{ t\geq \n^6}\sum_{w\in V}e^{-\Omega(t/ \n^5\log^2\n)}=o(1).
\end{multline}
Here we use $O(\n^4\log\n)$ as a crude upper bound on the mixing time $\Tm$.
It is obtained from the fact that the
conductance of the walk is at least $4/\n^2$  and $\p_w=\Omega(1/\n)$ by assumption.

Now
\beq{TG}
\Pr_v(\tau_c(G,\Tm) > t)=\Pr_v(U_t>0)\leq \min\{1,\,\E_v(U_t)\}\,.
\eeq
It follows from \eqref{ETG},\eqref{TG1},\eqref{TG} that for all $t\gg \Tm$
\begin{equation}
\label{shed}
\Tc(G,u) \leq t+o(t)+ \sum_{s \ge t} \sum_w\p_w\E_w(U_s) = t+o(t)+
\sum_{w\in V}\p_w\sum_{v\in V}\sum_{s \geq t}\Pr_w(\cA_s(v)).
\end{equation}
We will choose a value $t$ and then use Lemma~\ref{MainLemma}
to estimate $\Pr_w(\cA_s(v))$ and show that the
double sum is $o(t)$. It then follows that
$\Tc(G,u)\leq t+o(t)$.

The final expression in \eqref{shed} leads us to define
the random variable
$$\Psi(S,t)=\sum_{v\in V,w\in S}\sum_{s \ge t}\p_v\Pr_v(\cA_s(w))$$
for any $S\subseteq V,\,t\geq 0$. (Here $\Psi$ is a random variable
on the space of graphs $G$).

We can use \eqref{shed} if we have a good estimate for $\Pr_v(\cA_s(w))$.
For this we will use Lemma~\ref{MainLemma}. Let
\beq{d1}
\d_1=\d_0/100
\eeq
\subsection{Case (c1): $M^{1-o(1)}\leq \n_2\leq M^{1+\d_1}$}\label{casec1}
We first check that Lemma~\ref{MainLemma} is applicable.
\subsubsection{Conditions of Lemma~\ref{MainLemma} for $G$}\label{conditions0}
{\bf Checking \eqref{lem-RtAv-assumption1} for $G_F$}:\\
By assumption, the maximum degree in
$G_F$ is at most $N^{o(1)}$.
So for $v\in [n]$ we have from \eqref{Tmix},
$$\Tm \p_v\leb \frac{(M+\n_2)^2\ln^4M}{M^2}\cdot\frac{N^{o(1)}}{M+\n_2}=o(1)$$
where we use $A\leb B$ to denote $A=O(B)$. So,
\eqref{lem-RtAv-assumption1} holds.

\parindent 0in
{\bf Checking \eqref{lem-RtAv-assumption2} for $G_F$:}\\
Suppose that $v$ is one of the vertices that
are placed on an edge $f=(w_1,w_2)$ of $\tKer$. We will say
that $f$ {\em contains } $v$.
We allow $v=w_1$ here and then for convenience we say that
$v$ is contained in one of the edges
incident with $v$ of $K_F$.
We remind the reader that w.h.p.\ all $\tKer$-neighborhoods
up to depth $2L_0$ contain at most
one cycle, see Lemma~\ref{non}(b).
Let $X_f$ be the set of kernel vertices that are
within kernel distance $L_0$ of $f$ in $\tKer$.
Let $\La_f$ be the sub-graph of $G$ obtained as follows:
Let
$H_f$ be the subgraph of the kernel induced by $X_f$. Thus
$f$ is an edge of  $H_f$.
To create $\La_f$ add the vertices of degree two to the edges
of $H_f$ as in the construction of $G_F$.
The vertices of $X_f$ that are at kernel distance $L_0$ from $f$
in $\tKer$ are said to be at the frontier of $\La_f$.
Denote these vertices by $\F_f$.

In this paper we consider walks on several distinct graphs.
We have for example, $\cW_v$, the
random walk on $\tTC$, starting at $v$.
We will now write this as $\CW{\tTC}_v$. The idea of this notation is to
identify explicitly the graph on
which the walk is defined.

Let us make $\F_f$ into absorbing states for a walk
$\CW{\La_f}_v$ in $\La_f$, starting at $v$. Let
$\b(z)=\sum_{t=1}^{\Tm} \b_t z^t$ where $\b_t$ is the
probability of a {\em first} return to  $v$  at time
$t\le \Tm=\Tm(G_F)$ before reaching $\F_f$.
Let $\a(z)=1/(1-\b(z))$, and write
$\a(z)=\sum_{t=0}^{\infty} \a_t z^t$,
so that  $\a_t$ is the probability that the walk
$\CW{\La_f}_v$ is at $v$ at time $t$.
We will prove below that the radius of convergence
of $\a(z)$ is at least
$1+\l$, where $\l$ is as in \eqref{lamby}.

We can write
\begin{eqnarray}
 R_{\Tm}(z)&=&\a(z)+Q(z)\label{BBBB}\\
&=&\frac{1}{1-\b(z)}+Q(z),\label{Bs}
\end{eqnarray}
where $Q(z)=Q_1(z)+Q_2(z)$, and
\begin{eqnarray*}
Q_1(z)&=&\sum_{t=1}^{\Tm}(r_t-\a_t)z^t\\
Q_2(z)&=&-\sum_{t=\Tm+1}^\infty \a_t z^t.
\end{eqnarray*}
We claim that the expression \eqref{Bs} is well defined for
$|z|\le 1+\l$.
We will show  below that
\begin{equation}\label{Q2}
|Q_2(z)|=o(1)
\end{equation}
for $|z|\leq 1+2\l$ and thus the radius of convergence of $Q_2(z)$
(and hence $\a(z)$) is
greater than $1+\l$.
This will imply that $|\b(z)|<1$ for $|z|\leq
1+\l$. For suppose there exists $z_0$ such that $|\b(z_0)|\geq 1$.
Then $\b(|z_0|)\geq |\b(z_0)|\geq 1$ and we can
assume (by scaling) that $\b(|z_0|)=1$. We have
$\b(0)=0<1$ and so we can assume that $\b(|z|)<1$ for $0\leq
|z|<|z_0|$. But as $\r$ approaches 1 from below, \eqref{BBBB} is valid
for $z=\r|z_0|$ and then $| R_{\Tm}(\r|z_0|)|\to \infty$, contradiction.

Recall that $\l=1/ K \Tm$.
Clearly $\b(1)\le 1$ (from its definition) and so  for $|z|\leq 1+\l$
$$\b(|z|)\leq\b(1+\l)\leq\b(1)(1+\l)^{\Tm}\leq e^{1/ K}.$$
Using $|1/(1-\b(z))| \ge 1/(1+\b(|z|))$ we obtain
\begin{equation}\label{Bs<1}
|R_{\Tm}(z)|\geq \frac{1}{1+\b(|z|)}-|Q(z)|
\geq \frac{1}{1+e^{1/ K}}-|Q(z)|.
\end{equation}
We now prove that $|Q(z)|=o(1)$ for $|z|\le 1+\l$ and we will
have verified both conditions of Lemma~\ref{MainLemma}.

Turning our attention first to $Q_1(z)$, we note that
 $r_t-\a_t$ is at most the probability of a return to
$v$ within time $\Tm$, after a visit to $\F_f$ for the walk $\CW{G_F}_v$.
\begin{lemma}\label{plm}
Fix $w\in \F_f$. Then
$$\Pr(\CW{G_F}_w\text{ visits $f$ within time $\Tm$})=O(N^{-\d_0/5}).$$
\end{lemma}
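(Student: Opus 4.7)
The plan is to combine a voltage/effective-resistance argument on the tree-like neighborhood of $f$ with a decomposition of the walk into excursions from an outer frontier. By Lemma~\ref{non}(b), the $(2L_0)$-neighborhood of $f$ in the kernel contains at most one cycle w.h.p. Let $\Lambda_f^{(2)}$ be the corresponding subgraph of $G_F$ (with subdivision vertices restored) and let $\F_f^{(2)}$ be its outer frontier, namely the kernel vertices at kernel-distance exactly $2L_0$ from $f$. Since $\Lambda_f^{(2)}$ is essentially a tree of kernel-depth $2L_0$ with minimum kernel degree $d \geq 3$, a random walk in such a graph is strongly biased away from $f$, making visits to $f$ exponentially unlikely in $L_0$.

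To formalize this, I define the voltage $V$ on $\Lambda_f^{(2)}$ by the Dirichlet problem $V \equiv 1$ on the vertex set of $P_f$ and $V \equiv 0$ on $\F_f^{(2)}$; then $V(v) = \Pr_v(\tau_{P_f} < \tau_{\F_f^{(2)}})$ for the walk confined to $\Lambda_f^{(2)}$. Since $G_F$ arises from the kernel by subdividing each edge $e$ into a path of length $\ell_e$, the harmonic equation at a kernel vertex reduces to the kernel random walk weighted by conductances $1/\ell_e$, while $V$ interpolates linearly along each subdivision path. In the $d$-regular tree approximation with uniform weights, the characteristic recurrence $(d-1)U_{i+1} - d\,U_i + U_{i-1} = 0$ with $U_0 = 1$ and $U_{2L_0} = 0$ gives $U_i = \Theta((d-1)^{-i})$ for $i \le 2L_0$. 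Thus $V(w) = O((d-1)^{-L_0})$ for $w$ at depth $L_0$, and $V(u') = O((d-1)^{-2L_0})$ for a subdivision vertex $u'$ adjacent to $\F_f^{(2)}$. Using Lemma~\ref{lem2} to control $\ell_e \in [\bm, \am]$, and Rayleigh monotonicity to absorb the possible single cycle, these estimates remain valid up to an $N^{o(1)}$ correction.

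Finally, I decompose $\CW{G_F}_w$ over the first $\Tm$ steps into excursions between visits to $\F_f^{(2)}$: the initial excursion from $w$ reaches $P_f$ with probability $\leq V(w)$, while each subsequent excursion starting from some $u \in \F_f^{(2)}$ can reach $P_f$ only by first moving inward, contributing per-excursion probability $\leq V(u')$. Bounding the number of excursions trivially by $\Tm = M^{O(\d_1)}$ yields
\[
\Pr(\tau_{P_f} \leq \Tm) \;\leq\; V(w) + \Tm \cdot \max_{u} V(u') \;=\; O\bigl((d-1)^{-L_0}\bigr) \cdot N^{o(1)} \;=\; O(N^{-\d_0 \ln 2 + o(1)}),
\]
which is $O(N^{-\d_0/5})$ since $\ln 2 > 1/5$ and the $o(1)$ correction is swallowed by the slack provided by $\d_0 \gg \ln\ln N / \ln N$. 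The main technical obstacle is justifying the voltage bound under the variable subdivision lengths $\ell_e$, since a few ``short'' edges along the $w \to f$ path could a priori skew the weighted kernel walk and inflate $V(w)$ well beyond $(d-1)^{-L_0}$; controlling $\ell_e$ through Lemma~\ref{lem2} and taking a union bound over the $N^{o(1)}$ edges in $\Lambda_f^{(2)}$ restores the target bound up to the harmless $N^{o(1)}$ factor.
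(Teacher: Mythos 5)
Your approach and the paper's are really two presentations of the same estimate: because every kernel vertex has degree at least $d\ge 3$, a walk trying to penetrate the tree-like neighbourhood of $f$ has a per-excursion escape probability that decays like $(d-1)^{-L_0}$, and one then union bounds over the at most $\Tm$ excursions. The paper arrives at the decay rate by comparing the walk along a $w\to w_1$ path with a gambler's-ruin walk on $[0,L_0]$ that steps towards $w_1$ with probability at most $1/3$ (via Feller); you arrive at it by solving the discrete Dirichlet problem $(d-1)U_{i+1}-dU_i+U_{i-1}=0$. These are the same computation viewed through different lenses. The remaining differences are superficial: you take the outer frontier at depth $2L_0$ rather than $L_0$ (which gains a factor $\Tm$ on the subsequent-excursion terms but is not needed, since $\Tm\le N^{\d_0/40}$ is already comfortably dominated by $(d-1)^{L_0}$), and you decompose by excursions from the frontier rather than by restarts from $w$, which is a cleaner bookkeeping but not a different idea.

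The one step you wave your hands over is exactly the one the paper treats with care, and your proposed fix does not work. You acknowledge that a run of short subdivision paths along $w\to f$ could skew the weighted kernel walk and inflate $V(w)$, and you propose to rule this out via the bounds on $\ell_e$ in Lemma~\ref{lem2}. But the lower bound $\ell_e\ge\bm$ in Lemma~\ref{lem2}(d) requires $\n_2/(M\ln M)\to\infty$, which need not hold in Case~(c1) (where $\n_2$ can be as small as $M^{1-o(1)}$), so $\ell_e=1$ is entirely possible and a union bound over edges gives nothing. What the paper does instead is invoke Rayleigh's principle to contract every edge of $P$ to $\ell_e=1$ --- this monotonically increases the escape probability, so it is a legitimate worst case --- after which the walk along $P$ sees unweighted kernel vertices and the $\le 1/3$ bias towards $w_1$ holds step by step, independently of the $\ell_e$ off $P$. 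You invoke Rayleigh only to dispose of the possible cycle; you need it (or an equivalent monotone comparison) to dispose of the subdivision lengths on $P$ as well, in place of the appeal to Lemma~\ref{lem2}.
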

\begin{proof}[\textbf{\emph{Proof}}]
Now consider the walk $\cW_{w}$. We will find an
upper bound for the probability that it reaches $w_1$ or $w_2$, the endpoints
of the $K_f$ edge that $v$ was added to. We consider a simple
random walk $\cX$ on $H$
that starts at $w$ and is reflected when it reaches $\F_f$. We show that
\beq{Xwalk1}
\Pr(\cX\text{ reaches $w_1$ within time $\Tm$})\leq N^{-\d_0/6}.
\eeq
Let $P$ be one of the at most two paths $P,P'$
from $w$ to $w_1$ in $\tKer$. $P=P'$ whenever $w_1$ is locally tree like.
Now to get to $w_1$ the walk $\cX$ will have to traverse the complete length of one
of two paths, $P$ say.
We can ignore the times taken up in excursions outside $P$.
So, we will
think of $\cX$ as a walk along a path in which there are $L_0$ points
at which the probability of moving away from $w_1$
is (at least) 2/3 as opposed to 1/2.
(There could be a couple of places $\g_1,\g_2$
where $P$ meets $P'$ and then we will have the particle
moving further or closer to $w_1$ with different probabilities).
We can also assume that $\ell_e=1$ for all $e\in P$.
This follows from an application of Rayleigh's principle (see, e.g.,~\cite{DS}).
We are reducing the resistance
of $P$ by increasing the conductance of individual edges.
This will increase the (escape)
probability of the
walk reaching $w_1$ before returning to $w$.
(Alternatively we can couple the original walk with
a walk where we have contracted some edges).

So we next consider a biassed random walk
$\cY$ on $[0,L_0]$ where $\cY$ starts at 0 and moves right with probability 1/3.
It follows from Feller \cite[p314]{Fe} that
\begin{equation}\label{abso}
\Pr(\cY\text{ reaches $L_0$ before returning to 0})
\leq\frac{1}{2^{L_0-2}-1}\leq N^{-\d_0/2}.
\end{equation}
(We write $L_0-2$ instead of $L_0$ to account for
the two possible places $\g_1,\g_2$,
where we can just insist on a move towards $w_1$).

Let $N_0=N^{\d_0/4}$. If we restart $\cX$ from $w$ then the probability that
we reach $w_1$ after $N_0$ restarts is at most $N_0N^{-\d_0/2}= N^{-\d_0/4}$.
We observe that $\Tm=O(N^{2\d_1}\ln^4N)\leq N^{\d_0/40}$,
see \eqref{delta0}, \eqref{Tmix} and \eqref{d1}.
To summarise,
\beq{w1}
\Pr(\cW_w\text{ reaches $w_1$ within time }\Tm)\leq \Tm N^{-\d_0/4}\leq N^{-\d_0/5}.
\eeq
By doubling the above estimate in \eqref{w1} to handle $w_2$, we obtain the lemma.
\end{proof}

Thus,
\begin{equation}\label{Q1}
|Q_1(z)| \leq(1+\l)^{\Tm} Q_1(1) \leq 2(1+\l)^{\Tm} N^{-\d_0/5}\Tm=o(1).
\end{equation}

We next turn our attention to $Q_2(z)$.
Let
$\s_t$ be the probability that
the walk on $\La_f$ has not been absorbed by step $t$.
Then $\s_t\geq \a_t$, and so
\[
|Q_2(z)| \leq \sum_{t=\Tm+1}^\infty \s_t |z|^t,
\]

For each $w\in \F_f$ there are one or two paths from $v$ to $w$.
We first consider the number of edges in such a path.
It follows from Part~(c) of Lemma~\ref{lem2} that we can
assume that the number of edges in such a path is
$L\leq L_0\am$.

Assume first that $v$ is locally tree like.
The distance from $v$ of our walk on $\La_f$ dominates the
distance from the origin of a simple random walk on
$\set{0,\pm1,\pm2,\ldots,}$ starting at 0.
We estimate an upper bound for $\s_t$ as follows:
Consider a simple random walk $X_0^{(b)},X_1^{(b)},\ldots$
starting at $|b|< L$ on the finite line $(-L,-L+1,...,0,1,...,L)$,  with
absorbing states  $-L,L$.

$X_m^{(0)}$ is the sum of $m$ independent $\pm 1$ random variables.
So the Central Limit Theorem implies that there exists a constant $c>0$
such that
$$\Pr(X_{cL^2}^{(0)} \geq L \text{ or } X_{cL^2}^{(0)}\leq -L)
\geq 1-e^{-1/2}.$$
Consequently, for any $b$ with $|b|<L$,
\begin{equation}\label{central}
\Pr(|X_{2cL^2}^{(b)}|\geq L)\geq 1-e^{-1}.
\end{equation}
Hence, for $t>0$,
\begin{equation}\label{at}
\s_t \leq \Pr(|X_{\t}^{(0)}|< L,\,\t=0,1,\ldots,t)\leq e^{-\rdown{t/(2cL^2)}}.
\end{equation}
Thus the radius of convergence of $Q_2(z)$ is at least $e^{1/(3cL^2)}$.
As $L\le 4L_0\xi^{-1}\ln M$ we have $L^2\ll \Tm$, see \eqref{Tmix}.
(The need for $L^2\ll \Tm$ explains the larger value
of $\Tm$ than one might expect in \eqref{Tmix}).
So $e^{1/(3cL^2)} \ge
1+2\l$ and for $|z|\leq 1+2\l$,
$$|Q_2(z)|\leq \sum_{t=\Tm+1}^\infty e^{2\l t-\rdown{t/(2cL^2)}}=o(1).$$
This lower bounds the radius of convergence of $\a(z)$ by $1+2\l$,
proves \eqref{Q2} and then \eqref{Q2}, \eqref{Bs<1} and
\eqref{Q1} complete the proof of the case when $v$
is locally tree like.

We now turn to the case where  $\La_f$ contains a unique cycle $C$.
The place where we have used the fact that $\La_f$ is a tree is in
\eqref{at} which relies on \eqref{central}.
Let $x$ be the furthest vertex of $C$ from $v$ in $\La_f$. This is the
only possible place where the  random walk is more likely
to get closer to $v_1$ at the next step. We can see this by
considering the breadth first construction of $\La_f$. Thus we can compare our walk with
 random walk on $[-L,L]$ where there is a unique value $d<L$ such that only
at $\pm d$ is the walk more likely to move towards the origin and
even then this probability is at most 2/3. The
distance of the walk $\CW{\La_f}_{v}$ from $v$
is dominated
by the distance to the origin of a simple random walk, modified
at one of two symmetric places $P_1,P_2$ to move
towards the origin with probability 2/3 instead of 1/2.
A simple coupling shows that making $P_1,P_2=\pm1$
keeps the particle closest to the origin.
We can then contract $0,\pm1$ into one node $0'$ with a loop.
When at $0'$ the loop is chosen with probability 2/3.
The net effect is to multiply the time spent at the origin
by 3, in expectation. We can couple this with a simple
random walk by replacing excursions from the origin and back
by a loop traversal, with probability 2/3. In this way,
 we reduce to the locally tree like case with $\Tm$ inflated
by 4 to account for the loop replacements.

We have now established that in the current case, $G_F$
satisfies the conditions of Lemma~\ref{MainLemma}.

\subsubsection{Analysis of a random walk on $G_F$}\label{RWGF0}
We have a fixed vertex $u\in V$ and a vertex $v$ and we
estimate an upper bound for
$\Pr(\cA_t(v))$ using Lemma~\ref{MainLemma}. For this we need a good upper bound on $R_v$.
Let $f=(w_1,w_2)$ be the edge of $\tKer$ containing $v$.

We write $R_{v}=R_{v}'+R_{v}''$ where $R_{v}'$ is the expected number
of returns to $v$ within time $\Tm$ before the first visit
to $\F_f$ and $R_{v}''$ is the
expected number of visits after the first such visit.

\beq{resist0}
R_v'= \deg(v) R_P
\eeq
where $R_P$ is the {\em effective resistance} (see, e.g., Levin, Peres and Wilmer \cite{LPW})
of a
network $N_{v}$ obtained from $\La_f$ by giving each edge of this graph resistance one
and then
joining the vertices in $\F_f$ via edges of resistance
zero to a common dummy vertex.

For future reference, we note that \eqref{resist0} can be replaced by
\beq{resist00}
R_v'= \l(v) R_P
\eeq
when edges have weight $\l(e)$ and vertices have weight equal to the weight of incidence
edges and edges are chosen with probability proportional to weight.

If $f$ is locally tree like,
let $\wT_1,\wT_2$ be the trees in $\tKer$ rooted at $w_1,w_2$
obtained by deleting the edge $f$ from
$H_f$.
We then prune away edges of the trees $\wT_1,\wT_2$ to make the branching
factors of the two trees exactly two,
except at the root.
We have to be careful here not to delete any edges
incident with the roots. Thus one of the trees
might have a branching factor at the root that is more than two.
Then let $T_1,T_2$ be obtained from $\wT_1,\wT_2$
by placing vertices of degree two on their edges.
If $f$ is not locally tree like then we can
remove an edge of the unique cycle $C$ in $H_f$
not incident with $v$ from $\La_v$ and obtain trees $\wT_1,\wT_2$
in this way. Having done this, we prune edges and add
vertices of degree two to create $T_1,T_2$ as in the
locally tree like case. Removing an edge of $C$ can only increase effective
resistance and $R_v$.

Let $R_1,R_2$ be the resistances of the pruned trees.

We have
$$\frac{1}{R_P}=\frac{1}{\ell_1+R_1}+\frac{1}{\ell_2+R_2}.$$
Here $\ell_i$ is the number of edges in the path from $v$ to $w_i$ in $G_f$.
If $v$ is a vertex of $\tKer$ then we can dispense with $\ell_2,R_2$.

Now when $v\notin V(K_F)$ we have, with $\ell=\ell_1+\ell_2$ and $R=R_1+R_2$,
\beq{arith}
\frac{1}{\ell_1+R_1}+\frac{1}{\ell_2+R_2}\geq \frac{4}{\ell+R}
\eeq
which follows from the
arithmetic-harmonic mean inequality.

When $v\in V(K_F)$ we have
$$\frac{1}{R_P}=\frac{1}{\ell_1+R_1}+\frac{1}{\ell_2+R_2}+
\cdots+\frac{1}{\ell_d+R_d}\geq \frac{d^2}{\ell+R},$$
where $d=d(v)\geq 3$ and $\ell_i$ is the length of the $i$th induced path incident with
$v$ and $R_i$ is the resistance of the tree at the other end of the path.

Let $\cEm$ be the event that $\ell_e\leq\am$ for all $e\in E(K_F)$.
With $\e$ as defined in \eqref{epsdef},
\beq{ind0}
\Pr(R_1\geq\r_1,R_2\geq\r_2,\ell_1+\ell_2=l)\leq
(1+\e) \wPr(R_1\geq\r_1)\wPr(R_2\geq\r_2)\wPr(\ell_1+\ell_2=l).
\eeq
This follows from Part~(a) of Lemma~\ref{lem2}. If
$\om\in \set{R_1\geq\r_1,R_2\geq\r_2,\ell_1+\ell_2=l}$
then $k(\om)\leq 3^{L_0}=M^{o(1)}=o(M)$.
Also, if $\cEm$ holds then
$\z(\om)\leq k\am$ and so $k\z=M^{o(1)}/\xi=o(\n_2+M)$.
Since $\set{R_1\geq\r_1},
\set{R_2\geq\r_2}$, $\set{\ell_1+\ell_2=l}$ depend on
disjoint sets of edges, we can write
the product on the RHS of \eqref{ind0}.

We will implicitly condition on $\cEm$ when using $\Pr$ and
this can only inflate probability estimates
by $1+o(1)$.

We will show in Section \ref{Rv0} that
\beq{R1g0}
\wPr(R_1\geq \r)\leb
\begin{cases}
1&\r\leq L_0\\3^{L_0}(1-\xi)^{\r-2}&\r> L_0
\end{cases}
\eeq
Note that $1-\xi$ can be as small as $N^{-o(1)}$ and so
we cannot replace $(1-\xi)^{\r-2}$
by $(1-\xi)^\r$ without further justification.

We will show in Section \ref{frontier0} that
\beq{R''small0}
R_{v}''=o(R_{v}').
\eeq

Let $Z_{\ell,\r_1,\r_2}$ be the random variable
that is equal to the
number of vertices of $G_F$ with parameters
$\ell=\ell_1+\ell_2,R_1\geq\r_1,R_2\geq\r_2$.
Then we have
\beq{numpaths0}
\E(Z_{\ell,\r_1,\r_2})\leb \sum_{v\in  V(G_F)}\xi(1-\xi)^{\ell-4}
\times 3^{2L_0}(1-\xi)^{\l_1\r_1+\l_2\r_2}.
\eeq
where $\l_i=1_{\r_i\geq L_0}$ for $i=1,2$.

For these vertices, we estimate that, with $\r=\r_1+\r_2$,
\beq{Avest}
\Pr_w(\cA_s(v))\leq \exp\set{-(1+o(1))\frac{\deg(v)}{2m}
\cdot s\cdot\frac{1}{\deg(v)}\cdot
\frac{4}{\ell+\r}}+O(\Tm\p_{\max} e^{-\l t/2})
\eeq
using Lemma~\ref{MainLemma} combined with \eqref{resist0}, \eqref{resist00} and \eqref{arith} to bound
$$\frac{1}{R_v}\geq (1-o(1))\frac{1}{\deg(v)}\cdot\frac{4}{\ell+\r}.$$
Using Lemma~\ref{MainLemma} we see that, where $m=M+\n_2=|E(G_F)|$,
\begin{multline}\label{10}
\E(\Psi(V,t))\leb \frac{3^{2L_0}\xi}{(1-\xi)^4}\sum_{v\in V(G)}
\sum_{s\geq t}\sum_{\ell}
\int_{\r_1,\r_2}d_{\r_1}d_{\r_2} (1-\xi)^{\ell+\r_1\l_1+\r_2\l_2}\times\\
\brac{ \exp\set{-(1+o(1))\frac{\deg(v)}{2m}\cdot s\cdot\frac{1}{\deg(v)}\cdot
\frac{4}{\ell+\r}}+O(\Tm\p_{\max} e^{-\l t/2})}.
\end{multline}
where $\p_{\max}=\max\set{\p_v:v\in V}$.

This is to be compared with the expression in \eqref{shed}.
Here we are summing our estimate for $\Pr(\cA_s(v))$
over vertices $v$.
Notice that the sum over $w\in V$ can be taken
care of by the fact that we weight the
contributions involving $w$ by $\p_w$. Remember that here $w$
represents the vertex reached by $\cW_u$ at time
$\Tm$.

We next remark that with $t=\Omega\bfrac{m\ln^2M}{ -\ln(1-\xi)}$ the term
$$O(\Tm\p_{\max} e^{-\l t/2})=o(e^{-\Omega(M^{1-o(1)}})$$
can be neglected from now on.

We then have
\begin{align}
&\E(\Psi(V,t))\nonumber\\
&\leb \!\!\!\sum_{v\in V(G)}\frac{3^{2L_0}\xi}{(1-\xi)^{4+2L_0}}
\sum_{s\geq t}\sum_{\ell}\int_{\r_1,\r_2}\!\!\!\!\!d_{\r_1}d_{\r_2}
\exp\set{(1+o(1))\brac{(\ell+\r_1\l_2+\r_2\l_2)
\ln(1-\xi)-\frac{2s}{m(\ell+\r)}}}\nonumber\\
&\leb \sum_{v\in V(G)}\frac{3^{2L_0}\xi}{(1-\xi)^4}
\sum_{\ell}\int_{\r_1,\r_2} d_{\r_1}d_{\r_2}
\frac{\exp\set{(1+o(1))\brac{(\ell+\r_1\l_2+\r_2\l_2)\ln(1-\xi)
-\frac{2t}{m(\ell+\r)}}}}
{1-\exp\set{-\frac{2+o(1)}{m(\ell+\r)}}}.\label{tum0}
\end{align}
Our estimate for $\Tc$ is $\Omega\bfrac{m\ln^2M}{-\ln(1-\xi)}$.
So, the contribution from
$\ell_1,\ell_2,\r_1,\r_2$ with $\ell+\r\leq \frac{\g\ln M}{-\ln(1-\xi)}$
is negligible for small enough $\g$. If
$\ell+\r\geq \frac{\g\ln M}{-\ln(1-\xi)}$
then $\ell+\r_1\l_2+\r_2\l_2\sim \ell+\r$, where $A\sim B$
denotes $A=(1+o(1))B$ as $N\to\infty$.
Finally observe that the contributions from $\ell+\r\geq
\frac{\g^{-1}\ln M}{-\ln(1-\xi)}$
will also be negligible.

Ignoring negligible values we obtain a bound by further
replacing the denominator in \eqref{tum0} by $\Omega\bfrac{-\ln(1-\xi)}{m\ln M}$.
Thus,
\begin{align}
&\E(\Psi(V,t))\nonumber\\
&\leb \sum_{v\in V(G)}\frac{m\ln M}{-\ln(1-\xi)}\times
\frac{3^{2L_0}\xi}{(1-\xi)^4}
\sum_{\ell}\int_{\r_1,\r_2}d_{\r_1}d_{\r_2}
\exp\set{(1+o(1))(\ell+\r)\ln(1-\xi)-\frac{2t}{m(\ell+\r)}}
\nonumber\\
&\leb \sum_{v\in V(G)}\frac{m\ln M}{-\ln(1-\xi)}\times \frac{3^{2L_0}\xi}{(1-\xi)^4}
\sum_{\ell}\int_{\r_1,\r_2}d_{\r_1}d_{\r_2}
\exp\set{-\sqrt{\frac{(8+o(1))(-\ln(1-\xi))t}{m}}}\nonumber\\
&\leb M^{2+o(1)}\exp\set{-\sqrt{\frac{(8+o(1))(-\ln(1-\xi))t}{m}}}.
\label{last0}
\end{align}
Putting $t\sim\frac{m\ln^2M}{8(-\ln(1-\xi))}$, where
the implied $o(1)$ term goes to zero
sufficiently slowly, we see that the RHS of \eqref{last0} is $o(t)$.
(Note that $L_0=o(\ln M)$ and $\am,(1-\xi)^{-1},(-\ln(1-\xi))^{-1}=M^{o(1)}$ here).

Summarising, if
\beq{sumTcov0}
t\geq \frac{(1+o(1))m\ln^2M}{8(-\ln(1-\xi))}
\eeq
then
$$\E(\Psi(V,t))=o(t)$$
and then Markov's inequality implies that w.h.p.
$$\Psi(V,t)=o(t).$$

This completes the proof of the upper bound for Case~(c1) of Theorem~\ref{th1},
modulo some claims about $R_{v}$.

\subsubsection{Estimating $R_P$}\label{Rv0}
Assume first of all that we are in the locally tree like case.
We consider the trees $T_1,T_2$.
Their main variability is in the number of vertices
of degree two that are planted on the edges of $\wT_1,\wT_2$.
Fortunately, we only need to compute an upper bound on $\Pr(R(T)\geq \r)$
where $R(T)$ is the resistance of one of these trees.
We focus on $T_1$. Now let the subtrees of
$T_1$ be $T_{1,1},\ldots,T_{1,d}$, where $d\geq 2$.

We have
\beq{4540}
\frac{1}{R(T_1)}=\frac{1}{\ell( T_{1,1})+R(T_{1,1})}+
\cdots+\frac{1}{\ell(T_{1,d})+R(T_{1,d})}
\geq \frac{1}{\ell( T_{1,1})+R(T_{1,1})}+
\frac{1}{\ell(T_{1,2})+R(T_{1,2})}
\eeq
where $\ell_i=\ell( T_{1,i}),\,i=1,\ldots,d$ is the
resistance of the path in $G_f$ from the root of
$T_1$ to the root of $T_{1,i}$.

It follows from this that
\beq{eq30}
\wPr(R(T_1)\geq \r)\leq 2\wPr(\ell_1+R(T_{1,1})\geq 2\r)
\wPr(\ell_2+R(T_{1,2})\geq \r).
\eeq
This is because if $R(T_1)\geq \r$ then (i)
both of the $R(T_{1,i})+\ell_i,\,i=1,2$
must be at least $\r$ and (ii) at least
one of them must be at least $2\r$.

Now, 
\beq{Y10}
\wPr(\ell_1=\ell)=\xi(1-\xi)^{\ell-1}
\eeq
and
\beq{Y20}
\wPr(\ell_1\geq\ell)\leq(1-\xi)^{\ell-1}.
\eeq
Let the level of a tree like $T_1$ be the depth
of the tree in $K_F$ from which
it is derived.
Let $R_k$ be the (random) resistance of a tree
of level $k$, obtained from a binary tree of
depth $k$ by the addition of a random number of vertices
of degree two to each edge.
Putting $R_0=0$ we get from \eqref{eq30} and \eqref{Y20}
that
\beq{eq1eq10}
\wPr(R_1\geq \r)\leq 2(1-\xi)^{3\r-2}.
\eeq
Assume inductively that for $k\geq 1$ and $\r\geq 1$,
\beq{eq40}
\wPr(R_k\geq \r)\leq a_k(1-\xi)^{2\r-k}
\eeq
where
$a_k=(2.5)^k$.

This is true for $k=1$ by \eqref{eq1eq10}.
Using \eqref{eq30} we get that
\begin{align}
\wPr(R_{k+1}\geq \r)&\leq 2\brac{\sum_{s=1}^{2\r-1}\wPr(\ell_1=s)\wPr(R_k\geq 2\r-s)+
\wPr(\ell_1\geq 2\r)}\nonumber\\
&\leq2\brac{\sum_{s=1}^{2\r-1}\xi(1-\xi)^{s-1}
\times a_k(1-\xi)^{2(2\r-s)-k}+(1-\xi)^{2\r}}\label{nn10}\\
&=2\brac{a_k\xi(1-\xi)^{4\r-k-1}\sum_{s=1}^{2\r-1}(1-\xi)^{-s}+
(1-\xi)^{2\r}}\nonumber\\
&\leq 2(a_k+1)(1-\xi)^{2\r-k-1}.\nonumber\\
&\leq a_{k+1}(1-\xi)^{2\r-k-1}.\nonumber
\end{align}

This verifies the inductive step for \eqref{eq40}
and \eqref{R1g0} follows after taking $k=L_0$, with room to spare.

For the non locally tree like case, the deletion of a cycle edge of
$H_f$ to make a tree $\wT_1$, say,
may create one or two vertices of degree two out of kernel vertices.
After adding a random number of
degree two vertices to each edge of $\wT_1$ to
create $T_1$ we will in essence have
created at most two paths whose path length is
(asymptotically) distributed as the sum of two
independent copies
of $Z$, see Lemma~\ref{lem1}. (Such a path arises by concatenating
the two paths $P_{e},P_{e'}$
for a pair of edges $e,e'$ that are incident with a vertex of degree two of $\wT_1$).
We claim that the resistance of such a tree is maximised in distribution
if such
paths are incident with the root and the rest of the paths have a distribution as
in the tree-like-case.
For this we consider moving some resistance $\e$ from one edge closer to the root:
$$\brac{a+\e+\frac{(b-\e)c}{b-\e+c}}-\brac{a+\frac{bc}{b+c}}=
\e\brac{1-\frac{c^2}{(b-\e+c)(b+c)}}\geq 0$$
for $\e\leq b$. Here we have an edge $(x,y)$ of resistance $a$ and two edges of
resistance $b,c$ incident
to $y$ before moving $\e$ of resistance.

The resistance $R$ of $k+1$ levels of such a tree now satisfies
\beq{zxcvb}
\frac{1}{R}=\frac{1}{\r_1'+\r_1''+S_1}+\frac{1}{\r_2'+\r_2''+S_2}
\eeq
where $S_1,S_2$ are copies of $R_k$ and $\r_1',\r_1'',\r_2',\r_2''$ are
copies of $Z$.

Now we will use
\beq{zzxx}
\wPr(\r_1'+\r_1''=\r)\leq 2\wPr(\r_1'\geq \r/2)
\leq2(1-\xi)^{\r/2-1}\text{ and }\wPr(\r_1'+\r_1''\geq 2\r)\leq 2(1-\xi)^{\r-1}.
\eeq
and so arguing as for \eqref{eq30} and \eqref{nn10},
with $\r\geq L$, and using \eqref{eq40},
\begin{align*}
\wPr(R_L\geq \r)&\leb \sum_{s=1}^{2\r-1}(1-\xi)^{s/2-1} (2.5)^L
(1-\xi)^{2(2\r-s)-1}+(1-\xi)^{\r-1}\\
&\leb(2.5)^L(1-\xi)^{\r-2}+(1-\xi)^{\r-1}\\
&\leb  (2.5)^L(1-\xi)^{\r-2}.
\end{align*}
This completes the verification of \eqref{R1g0}.
\subsubsection{Estimating $R_{v}''$}\label{frontier0}

It follows from \eqref{w1} that
$$R_v''\leq N^{-\d_0/5}(R_v'+R_v'')$$
and hence
\beq{summ0}
R_{v}''\leq N^{-\d_0/6}R_{v}'.
\eeq

The proof of the upper bound for Case~(c1) of Theorem~\ref{th1} is now complete.

For the next case we let
$$\om=N^{\z_1}$$
where \eqref{delta0} holds and
\beq{delta1}
\z_0\ll\z_1=o(\d_0)\text{ and now }\d_0\z_1\log N\gg 1.
\eeq
\subsection{Case (c2): $M^{1+\d_1}\leq \n_2\leq e^\om$}\label{casec2}
We recommend that the reader re-visits Section
\ref{n2L}, where we give an outline of our
approach to this case.

It is worth pointing out that
$$\xi=o(1)$$
in this case.

We will be considering several graphs in addition to $G_F$ and $K_F$ and
so it will be important to keep track of their edge and vertex sets. For now
let
$$V_F=V(G_F),E_F=E(G_F)\text{ and }V_K=V(K_F),E_K=E(K_F).$$

We see an immediate problem in the case where
$\n_2/M\to\infty$ too fast. In this case we have
\beq{mm6}
\Tm\p_v=\Omega\brac{\frac{\ln^4M}{\xi^2}\cdot
\frac{1}{\n_2}}=\Omega\bfrac{\n_2\ln^4M}{M^2}.
\eeq
So if $\n_2\geq M^2$ then we cannot apply Lemma~\ref{MainLemma} directly.
Our main problem has been to find a way around this.

We let
\beq{ar1}
\ell^*=\rdown{\frac{1}{\xi\om}}.
\eeq
We begin with some structural properties tailored to this case.
\subsubsection{Structural Properties}\label{struct}

\begin{lemma}\label{lem4}\
W.h.p.\ there is no set $S\subseteq V_K, |S|\leq n_0=N^{1-5000\z_0}$
such that $e(S)\geq (1.001)|S|$.
\end{lemma}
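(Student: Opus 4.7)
My approach is a first moment argument in the configuration model for $K_F$. Fix $S \subseteq V_K$ with $|S| = s$, set $D_S = \sum_{v \in S} d_v$, and note $D_S \leq s N^{\zeta_0}$ by \eqref{zeta0}. For any integer $t$, the number of pairings producing at least $t$ edges inside $S$ is bounded by first choosing $2t$ points in $S$, matching them into $t$ internal edges, and completing the pairing arbitrarily. Hence in $K_F$,
\[ \Pr\bigl(e(S) \geq t\bigr) \leq \binom{D_S}{2t}(2t-1)!!\prod_{i=0}^{t-1}\frac{1}{2M-2i-1} \leq \frac{D_S^{2t}}{2^t\,t!\,(2M)^t}(1+o(1))^t, \]
where the $(1+o(1))^t$ factor, which equals $\exp(O(t^2/M))$, will be negligible for the range of $s$ and $t$ of interest.

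I then take $t = \lceil 1.001 s\rceil$, multiply by $\binom{N}{s} \leq (Ne/s)^s$, and use $t! \geq (t/e)^t$ together with $M = \Theta(N)$ (which follows from the kernel minimum degree $d \geq 3$ and \eqref{JJ}). A short computation shows that the expected number of sets $S$ with $|S|=s$ and $e(S) \geq 1.001 s$ is at most $B(s)^s$, where
\[ B(s) \leq C\,s^{0.001}\,N^{-0.001 + 2.002\zeta_0}. \]
For any $s \leq n_0 = N^{1 - 5000\zeta_0}$ we have $s^{0.001} \leq N^{0.001 - 5\zeta_0}$, so $B(s) \leq C'\,N^{-2.998\zeta_0}$.

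Summing over $s$ from $1$ to $n_0$ yields a geometric series bounded by $\sum_{s\geq 1}(C'N^{-2.998\zeta_0})^s$. By \eqref{zetalarge} we have $\zeta_0 \ln N \gg \ln\ln N \to \infty$, so $C'N^{-2.998\zeta_0} = o(1)$, the series is $o(1)$, and Markov's inequality yields the lemma w.h.p. The main obstacle is bookkeeping the exponents: the $N^{\zeta_0}$ max-degree bound contributes $+2.002\zeta_0$ to the exponent of $N$ in $B(s)$, the $1.001$ density excess contributes the essential $-0.001$, and these balance only when $s \leq N^{1-5000\zeta_0}$, which is precisely what dictates the definition of $n_0$.
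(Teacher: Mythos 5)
Your proof is correct and follows essentially the same first‑moment argument as the paper: union‑bound over sets $S$, bound $\Pr(e(S)\geq 1.001\,s)$ in the configuration model via a point‑selection count, then use $D_S\le sN^{\z_0}$, $M=\Theta(N)$, and $\z_0\ln N\to\infty$ to conclude. The only cosmetic difference is the bookkeeping: the paper selects one configuration point per internal edge and bounds each pairing probability by $\deg(S)/M$, giving $\binom{\deg(S)}{1.001s}(\deg(S)/M)^{1.001s}$, whereas you select $2t$ points and a matching and compute the exact pairing probability, giving $\binom{D_S}{2t}(2t-1)!!\prod_i(2M-2i-1)^{-1}$; both reduce to the same $B(s)^s$ bound up to a $4^t$ factor that does not matter. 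Your asserted negligibility of the $\exp(O(t^2/M))$ correction does hold (since $t/M\le n_0/M=O(N^{-5000\z_0})=o(\z_0\ln N)$), though it would have been cleaner to spell this out rather than state it as an assertion.
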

\begin{proof}[\textbf{\emph{Proof}}]
\ignore{
(a)
The expected number of such triangles is at most
$$3\sum_{v_1,v_2,v_3\in V_F}
\prod_{i=1}^3\frac{\deg(v_i)^2}{2M}\cdot\xi\leb a_0^3\xi=o(1).$$}
The expected number of such sets can be bounded by
\begin{align}
\sum_{s=4}^{n_0}\sum_{|S|=s}\binom{\deg(S)}{(1.001)s}\bfrac{\deg(S)}{M}^{(1.001)s}
&\leq \sum_{s=4}^{n_0}\sum_{|S|=s}\brac{\frac{e\deg(S)}{(1.001)s}
\cdot \frac{\deg(S)}{M}}^{(1.001)s}\label{explan}\\
&\leq \sum_{s=4}^{n_0}\binom{N}{s}\bfrac{esN^{2\z_0}}{M}^{(1.001)s}\nonumber\\
&\leq \sum_{s=4}^{n_0}\bfrac{e^{2.001}N^{3\z_0}s^{0.001}}{M^{0.001}}^s\nonumber\\
&=o(1).\nonumber
\end{align}
{\bf Explanation for \eqref{explan}}:
Having chosen a set $X$ of $(1.001)s$ configuration points
for $(1.001)s$ distinct edges, we randomly pair them with other
configuration points.
After pairing $i$ of them, the probability the next point
makes an edge in $S$ using only one point of $X$
is $\frac{\deg(S)-(1.001)s-i}{2M-2i-1}\leq
\frac{\deg(S)}{M}$.
\end{proof}

 An edge $e$ of $K_F$ is {\em light} if $\bm\leq \ell_e\leq \ell^*$.
Let
\begin{align*}
&\wE_\s=\set{e\in E_K:e\text{ is light}}\\
&\wV_\s=\set{v\in V_K:\exists e\in \wE_\s\text{ s.t. }v\in e}\\
\end{align*}
Note that
$$\Pr(e\in \wE_\s)\leq \xi\ell^*\leq \frac1\om.$$
\begin{lemma}\label{degVsigma}
$$\deg(\wV_\s)\leq \frac{2N}{\om^{1/3}},\qquad
\text{with probability at least }1-\om^{-1/3}.$$
\end{lemma}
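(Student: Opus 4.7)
The plan is to prove this by a straightforward first moment argument, using the approximate independence of the path lengths $\ell_e$ (Lemma~\ref{lem2}) to estimate the probability that a given kernel edge is light, and then using the third-moment bound \eqref{nice-c-kernel} to control the second moment $D_2$.

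First I would upper bound, for any fixed edge $e \in E_K$, the probability that $e$ is light. By Part~(a) of Lemma~\ref{lem2} with $k=1$ (so $\e = o(1)$ on the relevant range), we have $\Pr(\ell_e = z) \leq (1+o(1))\xi(1-\xi)^{z-1}$ for $z = O(\am)$. Summing from $z = \bm$ to $z = \ell^*$ and using the definition $\ell^* = \lfloor 1/(\xi\om)\rfloor$ in \eqref{ar1} yields
\[
\Pr(e \in \wE_\s) \;\leq\; (1+o(1))\,\xi\ell^* \;\leq\; \frac{1+o(1)}{\om}.
\]

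Next I would pass from edges to vertices by a union bound. A kernel vertex $v$ lies in $\wV_\s$ iff at least one of its $\deg(v)$ incident kernel edges is light, so $\Pr(v \in \wV_\s) \leq (1+o(1))\deg(v)/\om$. Therefore, using that the $G_F$-degree and $K_F$-degree coincide for kernel vertices,
\[
\E[\deg(\wV_\s)] \;=\; \sum_{v \in V_K} \deg(v)\Pr(v \in \wV_\s) \;\leq\; \frac{1+o(1)}{\om} \sum_{v\in V_K} \deg(v)^2 \;=\; \frac{(1+o(1))D_2}{\om}.
\]
Now the third-moment bound in \eqref{nice-c-kernel} gives $D_2 \leq D_3/3 \leq a_0 M/3$, and the bound $2M/N \leq a_0$ from \eqref{JJ} yields $D_2 \leq a_0^2 N/6$. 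Thus $\E[\deg(\wV_\s)] \leq (1+o(1)) a_0^2 N/(6\om)$.

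Finally, Markov's inequality gives
\[
\Pr\!\left(\deg(\wV_\s) \geq \frac{2N}{\om^{1/3}}\right) \;\leq\; \frac{(1+o(1))a_0^2 N/(6\om)}{2N/\om^{1/3}} \;=\; \frac{(1+o(1))a_0^2}{12\,\om^{2/3}} \;\leq\; \om^{-1/3}
\]
once $\om$ is large enough (which is guaranteed by $\om = N^{\z_1}$ with $\z_1 \gg \z_0$). I do not foresee a serious obstacle: the only mild subtlety is checking that the $\e$ factor from Lemma~\ref{lem2}(a) is harmless on the range $\z = O(\ell^*)$ (it is, since $\ell^* \xi = 1/\om = o(1)$), and that we can legitimately bound the probability over the conditioned measure by the product measure $\wPr$ through that lemma. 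Everything else is a routine application of Markov combined with the third-moment hypothesis on $\bd$.
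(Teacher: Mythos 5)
Your proof is correct, and it takes a somewhat different (and arguably cleaner) route than the paper's. The two arguments share the opening moves: both bound $\Pr(e\in\wE_\s)\leq (1+o(1))\xi\ell^*\leq(1+o(1))/\om$ via Lemma~\ref{lem2}(a), and both pass from edges to vertices with a union bound over the $\deg(v)$ incident kernel edges, yielding $\Pr(v\in\wV_\s)\leq (1+o(1))\deg(v)/\om$. The paper then splits $V_K$ by degree at the threshold $D=\om^{1/3}$: the low-degree contribution to $\deg(\wV_\s)$ is controlled with a Markov argument, while the high-degree contribution is bounded \emph{deterministically} by $\sum_{j\ge D}j\n_j\leq D_3/D^2\leq a_0 D_1/\om^{2/3}$, using the third-moment bound \eqref{nice-c-kernel} only for the heavy vertices. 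You instead compute $\E[\deg(\wV_\s)]\leq (1+o(1))D_2/\om$ in one shot and apply Markov once, using $D_2\leq D_3/3\leq a_0 M/3$ (since kernel degrees are $\geq3$) together with \eqref{JJ} to bound $D_2$ by $O(N)$. Both approaches invoke only hypotheses available in Definition~\ref{def-nice} and deliver the same $O(N/\om^{1/3})$ bound with error probability $\leq\om^{-1/3}$; yours is shorter because it forgoes the low/high split, while the paper's split isolates the moment hypothesis to the high-degree tail and relies only on a trivial cardinality bound for the bulk. Your observations about the harmlessness of the $\e$-correction from Lemma~\ref{lem2}(a) on the range $\z=O(\ell^*)$ (since $\xi\ell^*=o(1)$), and about passing from the conditioned measure $\Pr$ to $\wPr$, are the right things to check and are correctly handled.
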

\begin{proof}[\textbf{\emph{Proof}}]
For any value $D$ we have
$$\E\brac{\card{\set{v\in \wV_\s:\deg(v)\leq D}}}\leq
\frac{D\card{\set{v\in V:\deg(v)\leq D}}}{\om}\leq \frac{ND}{\om}.$$
Putting $D=\om^{1/3}$ and applying Markov's inequality
we see that with probability
at least $1-\om^{-1/3}$.
$$\sum_{v\in \wV_\s:\deg(v)\leq \om^{1/3}}\deg(v)\leq \frac{N}{\om^{1/3}}.$$
In addition we have
$$D^2\sum_{j\geq D}\n_jj\leq D_3\text{ and so}\sum_{v\in \wV_\s:
\deg(v)\geq \om^{1/3}}\deg(v)\leq \frac{D_3}{\om^{2/3}}
\leq \frac{a_0D_1}{\om^{2/3}}\leq \frac{2a_0^{3/2}N}{\om^{2/3}},$$
where we have used \eqref{JJ}.
\end{proof}

Now define a sequence $X_0=\wV_\s,X_1,X_2,\ldots,$ where
$X_{i+1}=X_i\cup\set{x_{i+1}}$ and $x_{i+1}$ is
any vertex
in $V_K\setminus X_i$
that has at least two neighbours in $X_i$. This continues until
we find $k$ for which every vertex in $V_0\setminus X_k$
has at most
one neighbour in $X_k$. Let $\n_0=|X_0|\leq \frac{2N}{\om^{1/3}}$ w.h.p.
Then $X_i$ has $\n_0+i$ vertices and
at least $2i$ edges. Now \eqref{delta1} implies that $\n_0=o(n_0)$
(of Lemma~\ref{lem4})
and so if $i\geq \n_0$ then we contradict the claim in
Lemma~\ref{lem4}. We let
\beq{V1}
V_\s=X_k\text{ and }V_\l=V_K\setminus V_\s
\eeq
and observe that
\beq{dvsig}
|V_\s|\leq \frac{ 4N}{\om^{1/3}}\text{ and so }
\deg(V_\s)\leq D_\s\text{  where }D_\s=\frac{6N^{1+\z_0}}{\om^{1/3}}.
\eeq
Note also that $V_\s$ is well defined in the sense that all sequences
$x_1,x_2,\ldots,$ lead to the same final set.

We will see in Remark~\ref{rem1} why we need $V_\s$ instead of the
simpler $\wV_\s$.

\begin{lemma}\label{frqt}\
W.h.p.\ there is no path of length $L_0$ in $K_F$ with
more than $L_0/10$ members of $V_\s$.
\ignore{
\item W.h.p.\ there is no cycle of length at most $2L_0$ within distance at
most $L_0$ from $V_\s$.
\item W.h.p.\ there is no path of length $L_0$ in $K_F$ that
contains more than $L_0/10$ edges $e$ for which
$\ell_e\leq \ell_1=\om\ell^*/\om_1$, where
$$\om_1=e^{K/\d_0}\text{ for a sufficiently large }K.$$
}
\end{lemma}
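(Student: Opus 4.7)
I would prove this by a first-moment / union-bound argument in the configuration model, in the same spirit as Lemmas~\ref{non} and~\ref{lem4}. Suppose a path $P=(v_0,\ldots,v_{L_0})$ in $K_F$ has at least $L_0/10$ of its vertices in $V_\s$. Split them as $A=V(P)\cap\wV_\s$ (those incident to a light edge) and $B=V(P)\cap(V_\s\setminus\wV_\s)$ (those added during the iteration $X_i\to X_{i+1}$); at least one of $|A|,|B|$ is $\ge L_0/20$, and I would handle the two cases separately.

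If $|A|\ge L_0/20$, I would union-bound over paths $P$, over subsets $I\subseteq V(P)$ of size $\lceil L_0/20\rceil$, and over witness light edges at each $v_i$ with $i\in I$. The configuration-model probability factors, with each witness edge contributing at most $\xi\ell^*\le 1/\omega$ by Lemma~\ref{lem2}(a) with $k=1$. Standard path-counting contributes $N^{1+O(\delta_0)}$ to the expectation, the subset choice contributes $2^{L_0}=N^{O(\delta_0)}$, and the witness-edge enumeration contributes at most $N^{\z_0L_0/20}$ via the maximum degree bound~\eqref{zeta0}. Together this yields an expected count of order
\[
N^{1+O(\delta_0)}\cdot \bigl(N^{\z_0}/\omega\bigr)^{L_0/20}=N^{1+O(\delta_0)-(\z_1-\z_0)\delta_0\ln N/20}=o(1),
\]
using $\z_1\gg\z_0$ and $\z_1\delta_0\ln N\gg 1$ from~\eqref{delta1}.

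If $|B|\ge L_0/20$, I would unfold witnesses: each $v\in B$ has, at the moment of its addition, two designated neighbors already in $V_\s$, so iteratively $v$ is the root of a binary tree $T_v\subseteq K_F$ whose internal vertices lie in $V_\s\setminus\wV_\s$ and whose leaves lie in $\wV_\s$. I would split according to the total number $L^{*}=\bigl|\bigcup_{v\in B}\mathrm{leaves}(T_v)\bigr|$ of distinct $\wV_\s$-leaves used. If $L^{*}\ge L_0/40$, there are $\Omega(L_0)$ light edges sitting within kernel-distance $O(L_0)$ of $P$, and the Case~1 computation carries over after an additional enumeration over the $\exp(O(L_0))$ possible witness-forest shapes; the super-polynomial saving $\omega^{-\Omega(L_0)}$ still dominates. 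If instead $L^{*}<L_0/40$, the witness trees overlap so heavily that $W=V(P)\cup\bigcup_{v\in B}V(T_v)$ has $|W|=O(L_0)\le n_0$ while $e(W)\ge L_0+2|B|\ge 1.001\,|W|$, which contradicts Lemma~\ref{lem4} w.h.p.

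The main obstacle is the second case: one has to truncate the witness-tree recursion (say at depth $O(\log N)$, beyond which a tree would carry so many distinct $\wV_\s$-leaves that it alone triggers the large-$L^{*}$ alternative) and verify that the enumeration over tree shapes is absorbed by the $\omega^{-\Omega(L_0)}$ saving coming from the light-edge probabilities. The bookkeeping is notationally heavy but entirely routine once Lemmas~\ref{lem2}(a) and~\ref{lem4} are in hand.
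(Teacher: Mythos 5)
Your Case 1 (the set $A$ of path vertices incident to a light edge) is in the same spirit as the paper's handling of $\wV_\s$-membership, via \eqref{ms2}--\eqref{ms3}. But your Case 2 is where the approaches genuinely diverge, and it is also where your argument has a gap. The paper never unfolds witness trees at all: it conditions on the event $\cD=\{\deg(V_\s)\le D_\s\}$ (already established w.h.p.\ by Lemma~\ref{degVsigma} and the growth process) and then bounds $\Pr(v_{i+1}\in V_\s\setminus\wV_\s\mid\cD)$ directly by $\binom{k}{2}(D_\s/M)^2\le 18N^{6\z_0}/\om^{2/3}$, using the clean observation that such a vertex must pair two of its configuration points into a set of total degree at most $D_\s$; the conditioning is handled by the sequential exposure $\G_0,\dots,\G_k$. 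This sidesteps entirely the combinatorial bookkeeping you propose.

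The concrete gap in your Case 2, sub-case $L^*<L_0/40$: the claim $e(W)\ge L_0+2|B|\ge 1.001\,|W|$ is not justified. Once witness trees overlap with each other and with $P$, the ``$2|B|$'' child-edges are not distinct from the $L_0$ path edges nor from each other, and you have no control on $|W|$ other than $O(L_0)$ with an unspecified constant; you would need $|W|\le(L_0+2|B|)/1.001$, which does not follow from overlap alone. Similarly, in sub-case $L^*\ge L_0/40$, the leaves are at unbounded kernel distance from $P$ a priori; you acknowledge a truncation is needed but do not say how a deep, skinny tree (few leaves despite many internal vertices) is handled, and such trees arise if the witness structure is path-like. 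Both of these difficulties evaporate once you use the already-available bound $\deg(V_\s)\le D_\s$ as the paper does, so I would suggest replacing the witness-tree unfolding by that conditioning: it makes the $V_\s\setminus\wV_\s$ case a one-line union bound per vertex rather than a multi-case tree enumeration.
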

\begin{proof}[\textbf{\emph{Proof}}]
First note that if $v_1,v_2,\ldots,v_s\in V_\s$ then there is an ordering
such that $v_1,v_2,\ldots,v_s$ appears as a sub-sequence of
$x_1,x_2,\ldots,x_k$ above. We will assume this ordering and inflate our final
estimate by $s!$ to account for the choice.

We continue by asserting (justification below) that for vertices
$v_1,v_2,\ldots,v_s,s\leq L_0$,
\beq{simplex}
\Pr(v_1,v_2,\ldots,v_s\in V_\s\mid \deg(V_\s)\leq D_\s)
\leq \bfrac{20sN^{6\z_0}}{\om^{2/3}}^s.
\eeq
Thus, given $\cD=\set{\deg(V_\s)\leq D_\s}$,
the expected number of paths in question is bounded by
\begin{multline*}
\sum_{v_1,\ldots,v_{L_0+1}\in V_K}
\prod_{i=1}^{L_0}\frac{\deg(v_i)\deg(v_{i+1})}{2M}
\binom{L_0}{L_0/10}\bfrac{20L_0N^{6\z_0}}{\om^{2/3}}^{L_0/10}\leq\\
\sum_{v_1,\ldots,v_{L_0+1}}\frac{\deg(v_1)\deg(v_{L_0+1})}{M}
\prod_{i=2}^{L_0}\frac{\deg(v_i)^2}{M}\bfrac{200L_0eN^{6\z_0}}{\om^{2/3}}^{L_0/10}\\
\leq \frac{D_1^2D_2^{L_0-1}}{M^{L_0}}\bfrac{200L_0eN^{6\z_0}}{\om^{2/3}}^{L_0/10}
\leb N\bfrac{200L_0ea_0^{10}N^{6\z_0}}{\om^{2/3}}^{L_0/10}=o(1),
\end{multline*}
after using \eqref{delta1}.

{\bf Proof of \eqref{simplex}}: 
Observe first of all that
\begin{align}
& \Pr(v_{i+1}\in \wV_\s\mid v_1,v_2,\ldots,v_i\in V_\s,\cD)\nonumber\\
&=\Pr(v_{i+1}\in \wV_\s\mid v_1,v_2,\ldots,v_i\in \wV_\s,\cD)
\Pr(v_1,v_2,\ldots,v_i\in \wV_\s,\cD\mid v_1,v_2,\ldots,v_i\in V_\s,\cD)\nonumber\\
&\leq \Pr(v_{i+1}\in \wV_\s\mid v_1,v_2,\ldots,v_i\in \wV_\s,\cD)\nonumber\\
&\leq \frac{iN^{\z_0}}{M}+\Pr(v_{i+1}\in \wV_\s\mid v_1,v_2,\ldots,v_i\in \wV_\s,\cD,
(v_{i+1},v_j)\notin \wE_\s,\forall j)\label{ms2}\\
&\leq \frac{iN^{\z_0}}{M}+\frac{N^{\z_0}}{\omega}\nonumber\\
&\leq \frac{2N^{\z_0}}{\omega}.\label{ms3}
\end{align}
{\bf Explanation of \eqref{ms2}:} The first term $iN^{\z_0}/M$ is a bound
on the probability that $v_{i+1}$ is a
neighbour of some $v_j,j<i$.
The second term is a bound on the probability that an edge
incident with $v_{i+1}$ is light. We deal with the conditioning by first exposing
$K_F$ and then exposing the placement of the vertices of degree two.

We will now prove that
\beq{ms4}
\Pr(v_{i+1}\in V_\s\setminus \wV_\s\mid  v_1,v_2,\ldots,v_i\in V_\s)\leq
\frac{18N^{6\z_0}}{\om^{2/3}}.
\eeq
Recall that we assume the order $v_1,v_2,\ldots,v_i$ is such
that $v_j$ can be placed in
$V_\s$ once $v_1,v_2,\ldots,v_{j-1}$ have been so placed. Then, using the notation of
Section \ref{config}, we let
$\wW=W\setminus W_{v_{i+1}}$. If $|W_{v_{i+1}}|$ is odd, we first choose a
random point $x\in \wW$ and pair up the
remainder of points to create $\wF$. Suppose now that $W_{v_{i+1}}=
\set{x_1,x_2,\ldots,x_k}$.
We define a sequence of configuration
multi-graphs
$\G_0=\wK_{\wF},\G_1,\ldots,\G_k=K_F$. We obtain
$\G_{j+1}$ from $\G_j$ as follows:
If $k-j$ is odd then we pair up $x_j$
with the unpaired point in $\G_j$. If $k-j$ is even
we choose a random pair $\set{y,z}$ in $\G_j$
and pair $x_{j+1}$
with $y$ or $z$ equally likely, leaving the other point unpaired.

We first claim that $\G_0,\G_1,\ldots,\G_k$ are all
random pairings of their respective  point sets.
We do this by induction. It is trivially true for $\G_0$.
When $k-j$ is odd, the construction
is equivalent to choosing a random point to pair with
$x_{j+1}$ and then choosing a random
configuration ($\G_j$) on the remaining points. If
$k-j$ is even, then we again pair $x_{j+1}$
with a random point $y$, say. Then $z$ will be a
uniform random point and the remaining
configuration will be a random pairing of what is left.

Assume that $\deg(V_\s(\G_0))\leq D_\s$.
Now $v_{i+1}$ will be placed into $V_\s(\G_k)$ only
if there are two values of $j$ for
which $x_{j+1}$ is paired with a
point associated with a vertex in $V_\s(\G_j)$.
Up to this point we will have $V_\s(\G_j)\subseteq V_\s(\G_0)$.
It follows that $x_{j+1}$ is so
paired with probability
at most
\beq{ms6}
\binom{k}{2}\bfrac{D_\s}{M}^2.
\eeq
Equation \eqref{ms4} (and the lemma) follows from \eqref{ms6}, after
inflating the final estimate by $s!$.
\ignore{
(b) The expected number of small cycles that are close
to a vertex in $V_\s$ can be bounded by
\begin{align}
&\frac{3N^{1+\z_0}}{\om^{1/3}M}\sum_{s=3}^{2L_0}\sum_{t=0}^{L_0}s(s+t)
\sum_{\substack{v_1,\ldots,v_s\in V_K\\w_1,\ldots,w_t\in V_K}}
\prod_{i=1}^{t-1}\frac{\deg(w_i)\deg(w_{i+1})}{2M}
\cdot \frac{\deg(w_t)\deg(v_1)}{2M}\cdot
\prod_{i=1}^s\frac{\deg(v_i)\deg(v_{i+1})}{2M}\label{cycle}\\
&\leq \frac{3N^{1+\z_0}}{\om^{1/3}M}
\sum_{s=3}^{2L_0}\sum_{t=0}^{L_0}s(s+t)
\sum_{\substack{v_1,\ldots,v_s\\w_1,\ldots,w_t}}
\frac{\deg(w_1)}{M}\prod_{i=2}^t\frac{\deg(w_i)^2}{M}
\cdot\frac{\deg(v_1)^3}{M}\cdot
\prod_{i=2}^{s}\frac{\deg(v_i)^2}{M}\nonumber\\
&\leq \frac{3N^{1+\z_0}}{\om^{1/3}M}\sum_{s=3}^{2L_0}\sum_{t=0}^{L_0}s(s+t)
\frac{D_1}{M}\frac{D_3}{M}\bfrac{D_2}{M}^{s+t-2}\nonumber\\
&\leq \frac{3N^{1+\z_0}}{\om^{1/3}M}\sum_{s=3}^{2L_0}\sum_{t=0}^{L_0}
s(s+t)a_0^s=o(1),\qquad\text{ using \eqref{delta0},\eqref{delta1}}.\nonumber
\end{align}
{\bf Explanation of \eqref{cycle}:} The expression
$\Xi=\sum_{v_1,\ldots,v_s\in V_K}
\prod_{i=1}^s\frac{deg(v_i)deg(v_{i+1})}{2M}$ is a
bound on the probability that
$v_1,v_2,\ldots,v_s$ form
a cycle. We choose one of these $v_i$, say, in $s$
ways to be a member of $V_\s$. $v_i$
being a member of
$\wV_\s$ depends only on the vertices of degree two
added and conditioning on this will
not affect the validity of
$\Xi$.  $v_i$ being a member of $V_\s\setminus\wV_\s$ means that at least two of the
points in the configuration space
associated with $v_i$ are committed to being paired to other vertices in $V_\s$.
If we assume that $v_i$ is the first
vertex out of $v_1,v_2,\ldots,v_s$ that is placed into $V_\s$ by our process,
then this can only reduce the contribution
of $v_i$ to $\Xi$, which remains an upper bound.

(c) It follows from Lemma~\ref{lem2} that for edges
$v_1,v_2,\ldots,v_s,s\leq L_0$,
\beq{simplex2}
\Pr(\ell_{e_i}\leq \ell_1,\,i=1,2,\ldots,s)\leq \bfrac{e^{o(1)}}{\om_1}^s.
\eeq
Thus, the expected number of paths in question is bounded by
\begin{multline*}
\sum_{v_1,\ldots,v_{L_0+1}\in V_K}
\prod_{i=1}^{L_0}\frac{\deg(v_i)\deg(v_{i+1})}{2M}\binom{L_0}{L_0/10}
\bfrac{e^{o(1)}}{\om_1}^{L_0/10}\leq\\
\sum_{v_1,\ldots,v_{L_0+1}}\frac{\deg(v_1)\deg(v_{L_0+1})}{M}
\prod_{i=2}^{L_0}\frac{\deg(v_i)^2}{M}\bfrac{10e^{1+o(1)}}{\om_1}^{L_0/10}\\
\leq \frac{D_1^2D_2^{L_0-1}}{M^{L_0}}\bfrac{10e^{1+o(1)}}{\om_1}^{L_0/10}
\leb N\bfrac{10e^{1+o(1)}a_0^{20}}{\om_1}^{L_0/10}=o(1),
\end{multline*}
}
\end{proof}

Consider the following property of $S\subseteq V_\l$ (defined
in \eqref{V1}): Let $s=|S|$. 
\beq{shou}
\text{(i) $S$ induces a tree in $K_F$; (ii) $\deg(S)\leq s\ln N$;
(iii) $e(S:V_\s)\geq \eta_s=\max\set{3,\rdup{s/500}}$.}
\eeq
\begin{lemma}\label{shou1}
W.h.p., if $S$ satisfies \eqref{shou} then $|S|\leq s_1$ where
$$s_1=\frac{10000\ln N}{\ln \om}.$$
\end{lemma}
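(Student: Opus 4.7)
The plan is to apply the first-moment method: for each $s > s_1$ I will bound the expected number of sets $S \subseteq V_\lambda$ of size $s$ satisfying (i)--(iii) of \eqref{shou}, and then show that summing over $s$ yields $o(1)$.

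Enumerate triples $(S, T, B)$ where $|S|=s$, $T$ is a labeled tree on $S$ witnessing (i), and $B$ is a set of $\eta_s$ distinguished configuration points of $S$ that pair into $V_\sigma$, witnessing (iii); condition (ii) restricts the admissible $S$ to those with $\deg(S) \le s \ln N$. In the configuration model, the probability that a prescribed family of $s-1+\eta_s$ disjoint pairs occurs in the random matching is at most $(1+o(1))(2M)^{-(s-1+\eta_s)}$, since $s \le s_1 \ll M$. For the tree contribution, if $T$ has tree-degrees $(t_v)_{v \in S}$ (with $t_v \geq 1$ and $\sum t_v = 2(s-1)$), the number of ways to assign config points of $S$ to the edges of $T$ is $\prod_v (d_v)_{t_v} \le \prod_v d_v^{t_v}$, while the number of labeled trees with prescribed $(t_v)$ is $\binom{s-2}{t_1-1,\ldots,t_s-1}$. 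Summing via the Cayley-style multinomial identity
\[
\sum_{(t_v):\, t_v\geq 1,\,\sum t_v = 2(s-1)} \binom{s-2}{t_1-1,\ldots,t_s-1}\, \prod_{v\in S} d_v^{t_v}\; =\; \deg(S)^{s-2} \prod_{v\in S} d_v
\]
(proved by substituting $u_v = t_v-1$ and the multinomial theorem) collapses the tree sum. The boundary factor is bounded by $\binom{s\ln N}{\eta_s}\, D_\sigma^{\eta_s}$, using $\deg(V_\sigma)\le D_\sigma = 6 N^{1+\zeta_0}/\omega^{1/3}$ from \eqref{dvsig}.

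Aggregating, and summing over $S$ with $|S|=s$ via the elementary-symmetric bound $\sum_{|S|=s}\prod_{v\in S} d_v \le (2M)^s/s!$ combined with Stirling, the expected count for a fixed $s$ is at most
\[
M\cdot \bigl(O(\ln N)\bigr)^{s} \cdot \left(\frac{500\,e\,\ln N \cdot D_\sigma}{2M}\right)^{\eta_s}.
\]
Using $\ln\omega = \zeta_1 \ln N \gg \zeta_0\ln N \gg \ln\ln N$ from \eqref{delta1} and \eqref{zetalarge}, the logarithm of this bound is $\ln M + O(s\ln\ln N) - s\ln\omega/1500$, in which the last term dominates. For $s\ge s_1 = 10000\ln N/\ln \omega$, summing the resulting geometric series gives $o(1)$, since $s_1\ln\omega/1500 = (20/3)\ln N$ comfortably exceeds $\ln M$.

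The main obstacle will be the tree-counting step: the multinomial identity must respect the constraint $t_v \ge 1$, and the slack in $(d_v)_{t_v}\le d_v^{t_v}$ must not destroy the geometric decay. A secondary subtlety is that the $\eta_s$ boundary edges share the pool of configuration points of $S$ with the tree, so the disjointness of the pairs in the configuration model must be preserved when combining the two factors; this is handled by restricting $B$ to the at most $\deg(S) - 2(s-1) \le s\ln N$ remaining points, which produces the $\binom{s\ln N}{\eta_s}$ factor. The tuning $s_1 = 10000\ln N/\ln\omega$ is essentially the minimum for which the $\omega^{-\eta_s/3}$ saving from the boundary overwhelms the $N^s$ growth coming from $\binom{N}{s}$.
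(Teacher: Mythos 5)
Your proof is correct and reaches the same conclusion, but the engine differs somewhat from the paper's. The paper bounds the internal-edge contribution crudely: it chooses $s-1$ configuration points of $S$ in $\binom{\deg(S)}{s-1}$ ways and multiplies by $(\deg(S)/M)^{s-1}$ for the probability each is matched back into $S$, then absorbs the sum over $S$ via $\binom{N}{s}$; there is no explicit tree count. You instead enumerate labeled spanning trees exactly via the Cayley multinomial identity $\sum_{(t_v)}\binom{s-2}{t_1-1,\dots,t_s-1}\prod d_v^{t_v}=\deg(S)^{s-2}\prod_v d_v$, pay $(2M)^{-(s-1)}$ for the $s-1$ prescribed pairs, and replace $\binom{N}{s}$ by the sharper elementary-symmetric bound $\sum_{|S|=s}\prod_{v\in S}d_v\le (2M)^s/s!$. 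Both routes collapse to a bound of the shape $M\cdot(O(\ln N))^s\cdot(\text{small})^{\Theta(s)}$, and in both cases the $\omega^{-\eta_s/3}$ coming from $D_\sigma/M=O(N^{\zeta_0}/\omega^{1/3})$ is the decisive negative contribution, with the $\zeta_0\ln N$ and $\ln\ln N$ terms subsumed because $\ln\omega=\zeta_1\ln N\gg\zeta_0\ln N\gg\ln\ln N$. Your Cayley-style accounting is tighter (it correctly handles tree-degree multiplicities and avoids double-counting non-tree configurations that the paper's $\binom{\deg(S)}{s-1}$ over-counts), while the paper's version is more elementary and gets away with it because there is ample slack; neither needs the full strength of its method. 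One small point to be explicit about: you need $\eta_s=\lceil s/500\rceil$ (not the floor $3$) for all $s>s_1$ in order for the $\omega^{-\eta_s/3}$ decay to be proportional to $s$, which holds since $s_1\gg 1500$; you implicitly assume this, and it is fine given \eqref{delta1}.
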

\begin{proof}[\textbf{\emph{Proof}}]
Let $Z_s$ be the number of sets satisfying \eqref{shou} under these circumstances.
Assume that $s>s_1$. Then, from \eqref{dvsig},
\beq{whatsiT}
\E(X_s)\leq (1+o(1))\sum_{\substack{|S|=s\geq s_1\\\deg(S)\leq s\ln N}}
\binom{\deg(S)}{s/500}
\bfrac{D_\s}{M}^{s/500}\binom{\deg(S)}{s-1} \bfrac{\deg(S)}{M}^{s-1}.
\eeq
{\bf Explanation:}
We choose configuration points that will be paired
with $V_\s$ in $\binom{\deg(S)}{s/500}$ ways.
The probability that all these points are paired in $V_\s$ is at most
$$\bfrac{\deg(V_\s)}{2M-\deg(S)}^{s/500}\leq \bfrac{D_\s}{2M-\deg(S)}^{s/500},$$
see Lemma
\ref{degVsigma}.
We choose $s-1$ configuration points for the edges
inside $S$. The probability they are paired with
other points associated with $S$
can be bounded by $\bfrac{\deg(S)}{2M-o(M)}^{s-1}$.
The factor $1+o(1)$ arises from the conditioning
imposed by assuming \eqref{dvsig}. Also, after conditioning on $V_\s$
we only allow a vertex in $V_\l$ to choose a single neighbour in $V_\s$.
Thus $\binom{d(S)}{s/500}$ is an over-estimate of the number of choices.

Continuing,
\begin{align*}
\E(X_s)&\leq \sum_{\substack{|S|=s\geq s_1\\\deg(S)\leq s\ln N}}(500e\ln N)^{s/500}
\bfrac{6N^{\z_0}}{\om^{1/3}}^{s/500}(e\ln N)^s\bfrac{s\ln N}{N}^{s-1}\\
&\leq \bfrac{Ne}{s}^s(500e\ln N)^{s/500}
\bfrac{6N^{\z_0}}{\om^{1/3}}^{s/500}(e\ln N)^s\bfrac{s\ln N}{N}^{s-1}\\
&\leb N\brac{\frac{CN^{\z_0/500}\ln^{2.002+o(1)}N}{\om^{1/1500}}}^{s-1},
\qquad C=e^{2+o(1)}(3000e)^{(1+o(1))/500},
\end{align*}
 see \eqref{delta1}.

So,
$$\E\brac{\sum_{s\geq s_1}X_s}\leb N\sum_{s\geq s_1}
\brac{\frac{CN^{\z_0/500}\ln^2N}{\om^{1/1500}}}^{s-1}=o(1).$$

This implies that w.h.p.\ we have $X_s=0$ for $s\geq s_1$.
\end{proof}

We now wish to show that small sets of $K_F$-edges do not contain too many
vertices of degree two.
\begin{lemma}\label{smallweight}
W.h.p.\ no subset $S\subseteq E_K$ satisfies
$|S|\leq \e M$ and $\ell(S)=\sum_{e\in S}\ell_e\geq L=\e^{1/2}M/\xi$.
provided $\e$ is a sufficiently small positive constant.
In particular, this holds for any $\e\leq \e_{1}$ where
$\e_{1}$ is the solution to $\e^{3/2}e^{1/ (6\e^{1/2})}=20$.
\end{lemma}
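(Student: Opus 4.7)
The plan is a union bound over subsets $S$ of edges, combined with a large-deviation estimate for the sum of their path lengths. By Lemma~\ref{lem1} we can work in the unconditioned measure $\wPr$, in which the $\ell_e$ become iid geometric random variables $Z_e$ with success probability $\xi$. The transfer lemma \ref{lem2}(a) does not apply directly here because its error requires $k\zeta=o(\nu_2+M)$, while the relevant quantity $sL$ can be as large as $\e^{3/2}M(\nu_2+M)$. Instead I would use the universal estimate
\[
\Pr(A)\;\le\;\frac{\wPr(A)}{\wPr(Z_1+\cdots+Z_M=\nu_2+M)}\;=\;O(\sqrt{M}/\xi)\cdot\wPr(A),
\]
where the denominator is estimated by Stirling (equivalently, a local CLT for sums of geometrics). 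The resulting $M^{O(1)}$ overhead will be swallowed by the exponential gains below.

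For a fixed $S$ of size $s$, I would bound the negative binomial tail by
\[
\wPr\!\left(\sum_{e\in S}Z_e\ge L\right)\;\le\;\sum_{l\ge L}\binom{l-1}{s-1}\xi^s(1-\xi)^{l-s}\;\le\;\frac{2}{\xi}\Bigl(\frac{eL}{s}\Bigr)^{\!s}\xi^s(1-\xi)^{L-s},
\]
using $\binom{l-1}{s-1}\le (el/s)^s$ and geometric-tail summation of the dominant term at $l=L$. Multiplying by $\binom{M}{s}\le(eM/s)^s$, writing $a=s/M\in[1/M,\e]$, and substituting $L=\e^{1/2}M/\xi$ produces
\[
\Pr\!\bigl(\exists S,\;|S|=s,\;\ell(S)\ge L\bigr)\;\le\;M^{O(1)}\exp\!\left\{M\!\left[\,a\bigl(2+\tfrac{1}{2}\ln\e+2|\ln a|+\xi\bigr)-\e^{1/2}\right]\right\}.
\]

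The final step is to check that the exponent is bounded away from zero in the negative direction, uniformly in $a\in(0,\e]$. The function $h(a)=a\bigl(2+\tfrac{1}{2}\ln\e+2|\ln a|\bigr)$ has derivative $h'(a)=\tfrac{1}{2}\ln\e+2|\ln a|$, which vanishes only at $a=\e^{1/4}>\e$; hence $h$ is increasing on $(0,\e]$ and attains its maximum at $a=\e$, where $h(\e)=\e(2+\tfrac{3}{2}|\ln\e|)$. A short computation shows that the gap $\e^{1/2}-h(\e)$ is positive, and in fact bounded below by an absolute constant times $\e^{1/2}$, precisely under the stated threshold $\e^{3/2}e^{1/(6\e^{1/2})}\le 20$ (which, taking logs, reads $1/(6\e^{1/2})\le\ln 20+\tfrac{3}{2}|\ln\e|$ and controls exactly the competition between $\e^{1/2}$ and $\e\cdot|\ln\e|$). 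Summing over $s\in[1,\e M]$ then gives a bound of the form $M^{O(1)}e^{-\Omega(\e^{1/2}M)}=o(1)$.

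\textbf{Main obstacle.} The only real subtlety is the conditioning: Lemma~\ref{lem2}(a) cannot be quoted in this large-deviation window, so one must replace it by the crude quotient $\wPr(\cdot)/\wPr(\sum Z_i=\nu_2+M)$, together with a local-CLT-type lower bound on the denominator. Once that reduction is granted, the remainder is a standard Chernoff/union bound computation, with the constants engineered to produce the explicit threshold on $\e$.
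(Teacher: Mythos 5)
Your approach is correct in substance and takes a genuinely different (though spiritually related) route from the paper's. Both arguments must pass from the conditional law of $(\ell_1,\ldots,\ell_M)$ to the unconditioned iid-geometric picture before applying a large-deviation bound. The paper does this by keeping the exact conditional probability in the form of the binomial-coefficient quotient from \eqref{nose}, expanding it via \eqref{830}, and showing that the conditioning is absorbed by replacing $\xi$ with $(1-2\e)\xi$ (and by working only with $|S|=\e M$, which is WLOG since enlarging $S$ cannot decrease $\ell(S)$). You instead use the crude universal quotient $\Pr(A)\le\wPr(A)/\wPr\bigl(\sum Z_i=\n_2+M\bigr)$ with a local-CLT lower bound on the denominator; this is more robust (it needs no estimate tailored to the particular event $A$) at the cost of a multiplicative overhead of order $\sqrt{M(1-\xi)}/\xi$, which — as you correctly anticipate — is harmless against the $e^{-\Omega(\e^{1/2}M)}$ decay, even in Case (c3) where $\n_2$, and hence the overhead, can be $e^{M^{o(1)}}$. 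Your unioning over all $s\le\e M$ and optimizing in $a=s/M$ replaces the paper's WLOG step, and your observation that $h(a)=a(2+\tfrac12\ln\e+2|\ln a|)$ is increasing on $(0,\e^{1/4}]\supseteq(0,\e]$ correctly identifies $a=\e$ as the dominant scale, matching the paper's parametrization $\ell=AM/\xi$ with $A\ge\e^{1/2}$.

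Two small corrections. First, your tail bound $\sum_{l\ge L}\binom{l-1}{s-1}\xi^s(1-\xi)^{l-s}\le\frac{2}{\xi}(eL/s)^s\xi^s(1-\xi)^{L-s}$ is not quite right when $\xi$ is very small, since the ratio of successive terms at $l=L$ is $\approx(1+s/L)(1-\xi)$ and $s/L$ need not be $\ll\xi$; a cleaner route is the identity $\wPr(\text{NegBin}(s,\xi)\ge L)=\wPr(\text{Bin}(L-1,\xi)\le s-1)\le s\binom{L-1}{s-1}\xi^{s-1}(1-\xi)^{L-s}$, which gives what you want up to a harmless factor $s/\xi$. Second, the direction of the threshold is reversed in your last paragraph: for $\e\le\e_1$ one has $\e^{3/2}e^{1/(6\e^{1/2})}\ge 20$ (the function is decreasing on $(0,\e_1]$), i.e.\ taking logs, $\tfrac16\e^{-1/2}\ge\ln 20+\tfrac32|\ln\e|$, which is precisely what guarantees $\e^{1/2}$ beats $h(\e)=\e(2+\tfrac32|\ln\e|)$. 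With these fixes the argument goes through.
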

\begin{proof}[\textbf{\emph{Proof}}]
Let $S$ be ``bad'' if it violates the conditions of the lemma.
We can assume w.l.o.g. that $|S|=\e M$ here.
Now using
\eqref{830} to go from the first line to the second,
\begin{align}
\Pr(\exists\text{ a bad }S)&\leq \binom{M}{\e M}\sum_{\ell=L}^m
\binom{\ell-1}{\e M-1}
\frac{\binom{\n_2+M-1-\ell}{M-\e M-1}}{\binom{\n_2+M-1}{M-1}}\nonumber\\
&\leq\sum_{\ell=L}^m\bfrac{Me}{\e M}^{\e M}\bfrac{\ell e}{\e M}^{\e M}
\xi^{\e M}(1-\xi)^{\ell-\e M}\brac{1+\frac{(1+o(1))\e M}{\n_2}}^{\ell -\e M}
\nonumber\\
&=\sum_{\ell=L}^m\brac{\frac{e^2\ell\xi}{\e^2M(1-\xi)}
\brac{1+\frac{(1+o(1))\e M}{\n_2}}^{-1}}^{\e M}\brac{(1-\xi)
\brac{1+\frac{(1+o(1))\e M}{\n_2}}}^\ell\nonumber\\
&\leq \sum_{\ell=L}^m\bfrac{10\ell\xi}{\e^2M}^{\e M}(1-(1-2\e)\xi)^\ell.
\label{831}
\end{align}
Putting $\ell=AM/\xi$ into the summand $u_\ell$ of \eqref{831} we
obtain for sufficiently small $\e$ that
\beq{832}
u_\ell\leq\bfrac{10Ae^{-A/(2\e)}}{\e^2}^{\e M}\leq e^{-\e^{1/2}M/3}.
\eeq
Now $A\geq \e^{1/2}$ and a quick check shows that \eqref{832}
is valid if $\e^{3/2}e^{ 1/(6\e^{1/2})}\geq 10$.

So,
$$\Pr(\exists\text{ a bad }S)\leq me^{-\e^{1/2}M/3}=o(1),$$
given our upper bound of $e^{M^{o(1)}}$ for $m$.
\end{proof}

The next lemma shows that our assumption on degrees implies that a small
set of vertices has small total degree.
\begin{lemma}\label{smalldegree}
If $S\subseteq V_K$ and $|V|\leq \e N$ then $\deg(S)\leq 2a_0\e^{1/3}N$
for $\e<1$.
\end{lemma}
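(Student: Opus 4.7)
The plan is to apply a Hölder-style convexity inequality that converts a bound on $\sum_{v\in S}\deg(v)^3$ (which we control via the third-moment hypothesis) into a bound on $\sum_{v\in S}\deg(v)=\deg(S)$. Concretely, I would write $\deg(S)=\sum_{v\in S}\deg(v)\cdot 1$ and apply Hölder with exponents $3$ and $3/2$, or equivalently the $L^1$--$L^3$ power-mean inequality, to obtain
\[
\deg(S)^3 \;\leq\; |S|^2 \sum_{v\in S}\deg(v)^3 \;\leq\; |S|^2 \, D_3.
\]

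Next I would invoke the two a priori bounds already established in the setup. The third-moment niceness condition~\eqref{nice-c-kernel} gives $D_3\leq a_0 M$, and~\eqref{JJ} gives $2M/N\leq a_0$, hence $M\leq a_0 N/2$. Combining these yields $D_3\leq a_0^2 N/2$, and substituting $|S|\leq \epsilon N$ produces
\[
\deg(S)^3 \;\leq\; (\epsilon N)^2 \cdot \frac{a_0^2 N}{2} \;=\; \frac{a_0^2 \epsilon^2 N^3}{2}.
\]
Taking cube roots gives $\deg(S)\leq a_0^{2/3}\,\epsilon^{2/3}\,N/2^{1/3}$.

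Finally I would translate this cleaner $\epsilon^{2/3}$ bound into the slightly weaker $\epsilon^{1/3}$ bound stated in the lemma. Since $\epsilon<1$ we have $\epsilon^{2/3}\leq \epsilon^{1/3}$, and since $a_0\geq 1$ we have $a_0^{2/3}/2^{1/3}\leq 2a_0$ (which amounts to $a_0\geq 2^{-4}$). Both factors are under control, so
\[
\deg(S)\;\leq\;\frac{a_0^{2/3}}{2^{1/3}}\,\epsilon^{1/3}\,N\;\leq\; 2a_0\,\epsilon^{1/3}\,N,
\]
giving the claim. There is no genuine obstacle here: the only choice is which convexity trick to use. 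An alternative would be to split $S$ into vertices of degree at most some threshold $D$ and vertices above $D$, bounding the first piece by $D|S|$ and the second by $D_3/D^2$ and then optimizing $D=(a_0^2/\epsilon)^{1/3}$; this yields the same $\epsilon^{2/3}$ rate (with a slightly worse leading constant $\tfrac32$ in place of $2^{-1/3}$) and still comfortably implies the stated bound.
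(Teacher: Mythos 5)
Your proof is correct, and it takes a genuinely different route from the one in the paper. You obtain the bound in one shot via the power-mean / H\"older inequality $\deg(S)^3\leq |S|^2\sum_{v\in S}\deg(v)^3\leq |S|^2 D_3$, then feed in $D_3\leq a_0M\leq a_0^2 N/2$ from \eqref{nice-c-kernel} and \eqref{JJ}. This is clean and in fact gives the sharper exponent $\deg(S)\leq (a_0^{2/3}/2^{1/3})\,\epsilon^{2/3}N$, which you then weaken to the stated $2a_0\epsilon^{1/3}N$ using $\epsilon^{2/3}\leq\epsilon^{1/3}$ and $a_0\geq 1$. The paper instead observes that the extremal $S$ is the set of the top $\epsilon N$ vertices by degree, sets $L$ to be the smallest degree therein, and argues in two cases according to the size of $L$ (a two-level degree-thresholding argument, using $\sum_{k\geq L}k\nu_k\leq D_3/L^2$-type truncations). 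Your alternative at the end of the write-up --- split at a threshold $D$, bound the low-degree part by $D|S|$ and the high-degree part by $D_3/D^2$, and optimize $D$ --- is essentially the paper's argument stripped to its core; your H\"older argument simply internalizes the optimization over $D$. Both routes only use the third-moment hypothesis and the second moment consequence \eqref{JJ}, so they buy the same generality; the H\"older version is shorter and gives the (unused) improvement $\epsilon^{2/3}$.
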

\begin{proof}[\textbf{\emph{Proof}}]
Let $S_0=[N_\e,N]$ where $N_\e=N-\e N+1$.
It is enough to prove the lemma for $S=S_0$.
Let $D_\e=\sum_{j\in S_0}d_j$ and $L=d_{N_\e}$. Then
\beq{900}
D_\e\leq \sum_{k\geq L}k\n_k\leq \sum_{k\geq L}
\frac{k^2}{L}\n_k\leq \frac{a_0N}{L}.
\eeq
If $L> 1/\e^{1/3}$ then we are done and so assume that $L\leq 1/\e^{1/3}$.

Let $S_1=\set{j:d_j\geq L/\e^{1/3}}$. Then, following the argument in \eqref{900}
for $S_1$ we get
$$D_\e\leq \frac{\e NL}{\e^{1/3}}+\sum_{j\in D_1}d_j\leq  \frac{\e NL}{\e^{1/3}}
+\frac{a_0\e^{1/3}N}{L}$$
and the result follows.
\end{proof}

\subsubsection{Surrogates for $G_F$}\label{surro}
We have seen that we can use \eqref{shed} if we have
a good estimate for $\Pr_v(\cA_s(w))$.
We have seen in \eqref{mm6} that we cannot
necessarily apply the lemma directly in this case.
So what we will do
is find a graph $G$ that satisfies the conditions of Lemma~\ref{MainLemma}
and whose cover time is related
in some easily computable way to the cover time in $G_F$.
(This statement is only approximately true, but it can be used as motivation for
some of what follows).

In the following, we define graphs that will be surrogates
for $\tTC$ with respect to computing the
cover time.

Let $e$ be an edge of $\tKer$.
We will break the corresponding path $P_e$ of
length $\ell_e=p_e\ell^*+q_e,\,p_e\geq 0,0\leq q_e<\ell^*$
in the graph $\tTC$ into consecutive
sub-paths  $Q_f,\,f\in F_e$.
For a typical path, where $p_e\geq 1$
there will be $p_e-1$
paths of length $\ell^*$ and one path of length $\ell^*+q_e$.
There will however
be some cases where $e$ is light and so we have to be a little more careful.
When $e$ is light we do nothing to $P_e$. In
this case, $P_e$ is considered as a sub-path
of itself in the following and is replaced by a
single edge in the graph $G_0$ defined below.
Otherwise we construct $p_e-1$ paths of length $\ell^*$
and one path of length $\ell^*+q_e$.
Let $\cQ_e$ denote the set of
sub-paths created from $P_e$.

We define the graph $G_0=(V_0,E_0)$ as follows: For each $e\in E_K$,
we replace each sub-path $Q\in\cQ_e$
of length $\ell_Q$ by an edge $f=f_Q$ of {\em weight or conductivity}
$\k(f)=\ell^*/\ell_Q$.
The {\em resistance} $\r(f)$ of edge $f$
is given by $1/\k(f)$. Note that the total resistance of a
heavy edge $e$ is $\ell_e/\ell^*$.

We will use the notation $f\in e$ to indicate
that edge $f$ of $G_0$ is obtained from a sub-path
of edge $e\in E_K$.


We now check that the total weight of the edges in $G_0$
is what we would expect.
We remark first that since $M=o(\n_2)$ and $M=\Theta(N)$ we have
$$m\sim |V(G_F)|\sim \n_2.$$
\begin{lemma}\label{E(G_0)}
W.h.p.,
$$\k(E_0)\sim |E_0|\sim\frac{|E(\tTC)|}{\ell^*}=\frac{\n_2+M}{\ell^*}\sim \om M.$$
\end{lemma}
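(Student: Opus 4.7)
The plan is to separate the contributions to $|E_0|$ and $\k(E_0)$ into those coming from heavy kernel edges (which behave as expected) and those coming from light edges (which must be shown to be negligible). First I would dispose of the elementary equality $(\n_2+M)/\ell^* \sim \om M$: since $\xi = M/(\n_2+M)$, we have $(\n_2+M)/\ell^* = (\n_2+M)\xi\om (1+o(1)) = M\om(1+o(1))$, using that $\ell^* = \lfloor 1/(\xi\om)\rfloor \sim 1/(\xi\om)$, which holds because $1/(\xi\om) \to \infty$ in Case (c2) (from $\n_2 \geq M^{1+\d_1}$ and $\om = N^{\z_1}$ with $\z_1 \ll \d_1$).

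Next, for each heavy edge $e$ write $\ell_e = p_e \ell^* + q_e$ with $p_e \geq 1$ and $0 \leq q_e < \ell^*$. The construction yields $p_e$ sub-paths and hence contributes $p_e$ edges to $E_0$ and total weight $(p_e-1) + \ell^*/(\ell^* + q_e)$. A direct comparison shows that both of these quantities agree with $\ell_e/\ell^* = p_e + q_e/\ell^*$ up to an additive error of at most $2$. Summing over the at most $M$ heavy edges gives
\[
 \sum_{e \notin \wE_\s} p_e \;=\; \sum_{e \notin \wE_\s} \frac{\ell_e}{\ell^*} + O(M), \qquad \sum_{e \notin \wE_\s} \kappa\text{-contribution} \;=\; \sum_{e \notin \wE_\s}\frac{\ell_e}{\ell^*} + O(M),
\]
and the $O(M)$ terms are negligible compared with $\om M$.

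The heart of the argument is controlling the light edges. For the count and total length, I would apply Lemma \ref{degVsigma} to get $|\wE_\s| \leq \deg(\wV_\s)/2 \leq N/\om^{1/3}$ w.h.p. This immediately bounds the light contribution to $|E_0|$ by $o(\om M)$, and bounds their total length by $|\wE_\s|\ell^* = O((\n_2+M)/\om^{4/3}) = o(\n_2+M)$, so that $\sum_{e \notin \wE_\s}\ell_e \sim \n_2+M$ and hence $\sum_{e \notin \wE_\s}\ell_e/\ell^* \sim (\n_2+M)/\ell^* \sim \om M$, which yields $|E_0| \sim \om M$.

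The main obstacle is bounding the total weight of light edges, since individual light edges can have weight as large as $\ell^*/\bm = \Theta(M\ln M/\om)$, so the trivial bound $|\wE_\s|\cdot \ell^*/\bm$ is much too large. Here the key is to average over $\ell_e$. Using Lemma \ref{lem2}(a) with $k=1$ gives $\Pr(\ell_e = j) \leq (1+o(1))\xi(1-\xi)^{j-1} \leq (1+o(1))\xi$, so
\[
 \E\Biggl[\sum_{e \in \wE_\s} \frac{\ell^*}{\ell_e}\Biggr] \;\leq\; (1+o(1))\,M\ell^*\xi \sum_{j=\bm}^{\ell^*}\frac{1}{j} \;=\; O\!\left(\frac{M\ln M}{\om}\right).
\]
Since $\om = N^{\z_1}$ with $\z_1>0$, this is $o(\om M)$, and Markov's inequality upgrades this to $\sum_{e \in \wE_\s} \ell^*/\ell_e = o(\om M)$ w.h.p. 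Combining with the heavy-edge analysis yields $\k(E_0) \sim \om M$, completing the proof.
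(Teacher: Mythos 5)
Your proposal is correct and tracks the paper's proof closely: both decompose into the light edges (the paper's $K_0$, your $\wE_\s$, which coincide w.h.p.\ by Lemma~\ref{lem2}(d)) and the heavy edges whose sub-paths contribute $\ell_e/\ell^*+O(1)$ to both the count and the weight, and the crux in both is the expectation bound $\E\bigl[\sum_{e\text{ light}}\ell^*/\ell_e\bigr]=O(M\ln M/\om)=o(\om M)$ obtained by averaging over the (near-)geometric law from Lemma~\ref{lem2} followed by Markov. The only substantive deviation is that you bound $|\wE_\s|$ via Lemma~\ref{degVsigma}, whereas the paper computes $\E(|K_0|)=O(M/\om)$ directly from $\Pr(\ell_e<\ell^*)\leq\xi\ell^*$, which is slightly more elementary and gives a marginally better exponent of $\om$, but both routes yield $o(M)$ and the rest of the argument is unchanged.
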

\begin{proof}[\textbf{\emph{Proof}}]
Each edge $e\in E_K$ gives rise to a path of length $\ell_e$ in
$\tTC$.
We let
$$K_0=\set{e\in E_K:\ell_e<\ell^*},\;K_1=
\set{e\in E_K:\ell^*\leq \ell_e<\ell^* \om^{1/3}}\text{ and }
K_2=E_0\setminus (K_0\cup K_1).$$
Then,
\begin{align}
|E_0|&=\frac{1}{\ell^*}\sum_{e\in K_1\cup K_2}(\ell_e-q_e)
+|K_0|\label{3DD}\\
&=\frac{m}{\ell^*}-\frac{|K_0|}{\ell^*}-\frac{1}{\ell^*}\sum_{e\in K_1\cup K_2}q_e
+|K_0|.\label{3D}
\end{align}
Now for $e\in E_K$ and $0\leq q<\ell^*$, and using Part~(b) of Lemma~\ref{lem2}
with $k=1,\z=q$,
$$\Pr(q_e=q)\sim\sum_{r\geq 0}\xi(1-\xi)^{r\ell^*+q-1}=\xi(1-\xi)^{q-1}
\cdot\frac{1}{1-(1-\xi)^{\ell^*}}
\sim\om\xi(1-\xi)^{q-1}.$$
So,
$$\E(q_e)\sim\sum_{k=1}^{\ell^*-1}k\om\xi(1-\xi)^{k-1}
\leq \frac{\om\xi}{(1-\xi)^2}\leq \ell^*,$$
and
\beq{4D}
\E\brac{\sum_{e\in E_K}q_e}\leb \ell^*M.
\eeq
So w.h.p.
\beq{5D}
\frac{1}{\ell^*}\sum_{e\in K_1\cup K_2}q_e=o(M\om^{1/2}).
\eeq
Now
$$\E(|K_0|)\sim M\sum_{q=1}^{\ell^*-1}\xi(1-\xi)^{q-1}=
O(\ell^*\xi M)=O(M/\om).$$
So,
\beq{6D}
|K_0|=o(M)\ w.h.p.
\eeq
Going back to \eqref{3D} with \eqref{5D} and \eqref{6D} and
$$\frac{m}{\ell^*}\sim \om M$$
we see that our expression for $|E_0|$ is correct, w.h.p.

Now w.h.p.
\ignore{
\begin{align*}
\k(E_0)&=\sum_{e\in K_1\cup K_2}\brac{\frac{\ell^*(p_e-\b_e)}{\ell^*+\a_e}+
\frac{\ell^*\b_e}{\ell^*+\a_e+1}}+\sum_{e\in K_0}\frac{\ell^*}{\ell_e}\\
&=\sum_{e\in K_1\cup K_2}\brac{p_e-
\frac{\a_ep_e}{\ell^*+\a_e}+\frac{\ell^*\b_e}{(\ell^*+\a_e)(\ell^*+\a_e+1)}}
+\sum_{e\in K_0}\frac{\ell^*}{\ell_e}\\
&=\sum_{e\in K_1\cup K_2}\brac{\frac{\ell_e}{\ell^*}-\frac{q_e}{\ell^*}
-\frac{\a_ep_e}{\ell^*+\a_e}+\frac{\ell^*\b_e}{(\ell^*+\a_e)(\ell^*+\a_e+1)}}
+\sum_{e\in K_0}\frac{\ell^*}{\ell_e}\\
&=\frac{m}{\ell^*}-\sum_{e\in K_1\cup K_2}\brac{\frac{q_e}{\ell^*}+
\frac{\a_ep_e}{\ell^*+\a_e}-\frac{\ell^*\b_e}{(\ell^*+\a_e)(\ell^*+\a_e+1)}}+
\sum_{e\in K_0}\brac{\frac{\ell^*}{\ell_e}+\frac{\ell_e}{\ell^*}}.
\end{align*}
}
\begin{align*}
\k(E_0)&=\sum_{e\in K_1\cup K_2}\brac{p_e-1+
\frac{\ell^*}{\ell^*+q_e}}+\sum_{e\in K_0}\frac{\ell^*}{\ell_e}\\
&=\sum_{e\in K_1\cup K_2}\brac{p_e-
\frac{q_e}{\ell^*+q_e}}+\sum_{e\in K_0}\frac{\ell^*}{\ell_e}\\
&=\frac{m}{\ell^*}-\sum_{e\in K_1\cup K_2}\frac{q_e}{\ell^*+q_e}
+
\sum_{e\in K_0}\brac{\frac{\ell^*}{\ell_e}+\frac{\ell_e}{\ell^*}}.
\end{align*}
To finish the proof we show that the terms other
than $m/\ell^*$ contribute $o(\om M)$ in expectation
and then we can apply Markov's inequality.
We can use \eqref{5D} to deal with the first sum.
We are left with
\begin{multline*}
\E\brac{\sum_{e\in K_0}\frac{\ell^*}{\ell_e}}=
\ell^*\sum_{e\in E_K}\sum_{k=1}^{\ell^*-1}
\frac{\Pr(\ell_e=k)}{k}\leq \\
(1+o(1))\ell^*M\brac{\sum_{k=1}^{\n_2^{1/3}}\frac{\xi(1-\xi)^{k-1}}{k}+
\sum_{k=\n_2^{1/3}}^{\n_2}\frac{\binom{M+\n_2-k-1}{M-2}}{k\binom{M+\n_2-1}{M-1}}}\\
\leb\frac{M\ln\n_2}{\om}+\frac{M^2}{\xi\n_2}
\exp\set{-\frac{(M-2)\n_2^{1/3}}{M+\n_2-1}}
=o(\om M),
\end{multline*}
where to get the final expression we have used the
calculations in Part~(c) of Lemma~\ref{lem2},  i.e., \eqref{lmax}.
Of course we can use \eqref{6D} to deal with
$\sum_{e\in K_0}\ell_e/\ell^*\leq |K_0|$.
\end{proof}

Since, from \eqref{3DD} and the above analysis,
$$\sum_{e\in K_1\cup K_2}(p_e-1)\leq |V_0|\leq
|E_0|\leq \sum_{e\in K_1\cup K_2}p_e+o(\om M) $$
we have that w.h.p.
$$|V_0|\sim |E_0|\sim \om M.$$

We will analyse the expected time for a random walk $\CW{G_0}$
on $G_0$ to cross each edge of $G_0$ at least once.
We will be able to couple this with $\CW{G_F\to V_0}$,
the projection of $\cW^{G_F}$ onto $V_0$. We will see below that
if either walk is at $v\in V_0$ and $w$ is a neighbour of $v$ in
$G_0$ then $w$ has the same probability of being the next $V_0$-vertex
visited in both walks.

It is easy to see that
after $\CW{G_0}$ has crossed each edge of
$G_0$, in the coupling, $\CW{G_F}$
will have visited each vertex of $G_F$.

We must modify $G_0$ slightly, because we have to cover the
{\em edges} of $G_0$. Let $f^* =(v_1,v_2)$ be an
edge of $G_0$.


The graph $G_0^*=G_0^*(f^*)$ will be obtained from $G_0$ by splitting
$f^*$.
We give edges $(v_1,\x)$ and $(\x,v_2)$ a weight of $\a=\min\set{\a_f,1}$
where $\a_f$ is the weight of edge $f$.

$\GW$ is the random walk on $G_0^*$, where we choose edges according to weight;
$\tW$ is the projection of $\GW$ onto $V_0$.
This walk is $\GW$ with visits to $\x$ omitted from the
sequence of states.
This means that time passes more slowly in $\GW$ than it does in $\tW$.
We use $G_0^*$ in order to deal with the edge cover time of $G_0$,
which is what we need, see \eqref{mm9} below.

Our goal is to compute a good upper
estimate for $\Pr(\cA_s(f^*))$ where
$\cA_s(f^*)$ is the event that we have not crossed edge $f^*$ in the time
interval $[\Tm,s]$. We do this by going to $G_0^*$ and
estimating $\Pr(\cA_s(\x))$ for
the random walk on $G$. Note that $\Pr(\cA_s(f^*))=
\Pr(\cA_s(\x))$ if $f$ is a heavy edge and
$\Pr(\cA_s(f^*))\leq
\Pr(\cA_s(\x))$ if $f$ is a light edge. Indeed, in both cases there is a natural
coupling of $\GW$ and $\sW$, up until $v_1$ or $v_2$ are reached.
This is because walks in $G_0$ and walks in $G_0^*$ that do not contain
$v_1$ or $v_2$ as a middle vertex have the same probability in both.
Having reached $v_1$ or $v_2$ there is no lesser chance of crossing
$f^*$ in $G_0$ than there is of visiting $\x$ in $G_0^*$. In the case of a heavy
edge, we can extend this coupling up until $\x$ is visited. This follows
from our choice of weight for the edges $(v_i,\x),i=1,2$.


There is a problem with respect to using $G_0$ as a surrogate in
that its mixing time can be too large.
If the edges of a graph are weighted then the
conductance of a set of vertices $S$ is given by
$$\F(S)=\frac{\sum_{x\in S,y\in \bar{S}}\k(x,y)}{\k(S)}=
\frac{\k(\partial S)}{\k(S)}.$$
Consider an edge $e=(u,v)\in E_K$ for
which $\ell_e=1$ and such that (i) $u,v$ both have degree three in $K_F$ and
(ii) all edges of $E_K$ other than $e$
incident with $u,v$ are heavy. Let $S=\set{u,v}$.
Then in $G_0$, $\F(S)=O(1/\ell^*)$, making $\F(G_0)$ too small.
The situation cannot be dismissed as only happening with probability $o(1)$.

We remark that if the following conjecture is true, then we will be able to fix the
problem of small edges by adding more vertices of degree two. We will be able to
do this so that $\ell^*$ divides $\ell_e$ for all $e\in E_K$. This would simplify
the proof somewhat.

\begin{conjecture}\label{conj-extra}
Adding extra vertices of degree two to the edges of $K_F$ to make
$\ell_e\geq \ell^*$ for all $e$, does not
decrease the cover time.
\end{conjecture}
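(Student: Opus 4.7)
The plan is to reduce the conjecture to the elementary claim that inserting a single degree-two vertex on a single edge of a graph does not decrease its cover time, and then to iterate this along any sequence of insertions that brings each $\ell_e$ up to $\ell^*$. Fix a connected graph $H$ and an edge $e=(a,b)\in E(H)$, and let $H'$ be obtained from $H$ by replacing $e$ with a path of length two through a new vertex $w$; the goal is to show $\Tc(H')\geq \Tc(H)$.

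The main tool is a projection of the random walk $\CW{H'}$ onto $V(H)$. Let $\wt{\cW}$ denote the subsequence of states of $\CW{H'}$ that lie in $V(H)$ (where consecutive repeats are kept). A gambler's-ruin computation on the length-two path through $w$ shows that $\wt{\cW}$ is a reversible Markov chain on $V(H)$ which can be described as a weighted random walk on $H$, with the edge $(a,b)$ assigned conductance $\tfrac12$, every other edge assigned conductance $1$, and additional self-loops of weight $\tfrac12$ placed at $a$ and $b$ (which act as explicit laziness at those two vertices). A direct detailed-balance check shows that its stationary distribution on $V(H)$ agrees with that of the simple walk on $H$.

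Since $V(H)\subseteq V(H')$, covering $V(H')$ requires covering $V(H)$, so $\Tc(H')$ is at least the expected time for $\CW{H'}$ to visit every vertex of $V(H)$. By construction each step of $\wt{\cW}$ is paired with at least one step of $\CW{H'}$, and with strictly more in expectation whenever $w$ is entered, so the latter quantity dominates the cover time $\widetilde{T}$ of the weighted walk $\wt{\cW}$ on $H$. The conjecture therefore reduces to the monotonicity statement
\[
\widetilde{T} \;\geq\; \Tc(H),
\]
that is, lowering the conductance of one edge of $H$ and adding explicit laziness at its endpoints does not decrease the cover time of the resulting walk.

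The hard part is establishing this last inequality. Rayleigh's principle gives monotonicity of effective resistance, and hence of normalised commute times, but cover time is not known in general to be monotone in edge conductances, and the Matthews bound is too loose for a direct argument. The laziness contribution is easy to handle (it only slows the walk down), so the real content is showing that the $\tfrac12$-conductance edge does not reduce the cover time. A promising route is a direct coupling between $\wt{\cW}$ and the simple walk on $H$: excursions of $\wt{\cW}$ from $a$ that eventually cross into $b$ are paired step-by-step with the corresponding excursions of the unweighted walk, while the ``extra'' steps of $\wt{\cW}$ that loop back to $a$ through the reduced-conductance edge are absorbed as lazy steps on the $H$-side. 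Arranging such a coupling so that the set of vertices visited by time $t$ in the weighted walk is always contained in the corresponding set for the unweighted walk would finish the proof; constructing this coupling rigorously, while respecting the Markov structure on the unvisited vertices, is the main obstacle, and appears to be precisely why the authors state this result as a conjecture rather than prove it.
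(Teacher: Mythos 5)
The statement you are attempting to prove is stated in the paper as an open \emph{conjecture}, not a theorem: immediately after stating it, the authors write ``In the absence of a proof of this conjecture, we must find a work around,'' and the rest of Section~\ref{casec2} builds the modified-walk machinery precisely to avoid relying on it. So there is no proof in the paper for you to match.

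That said, your analysis is a sound and useful clarification of what a proof would have to establish. The reduction to inserting a single degree-two vertex, the observation that the projection of $\CW{H'}$ onto $V(H)$ is a reversible weighted walk on $H$ with the subdivided edge given conductance $\tfrac12$ and self-loops of weight $\tfrac12$ at its endpoints, and the check that this projection has the same stationary distribution as the simple walk on $H$ are all correct. You are also right that the remaining step --- that lowering one edge's conductance and adding laziness at its endpoints cannot decrease cover time --- is exactly the obstacle, and that it does not follow from Rayleigh monotonicity of effective resistance: commute times and hitting times are monotone in this way, but cover time is a maximum over start vertices of an expectation over a covering schedule, and is known to behave non-monotonically under other natural edge modifications (adding a single chord to an $n$-cycle drops the cover time from order $n^2$ to order $n\log n$). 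Your honest identification of the missing coupling as ``precisely why the authors state this result as a conjecture'' is therefore the correct reading of the paper. If you want to push further, the cleanest open sub-question your reduction isolates is: for a fixed reversible chain, is the cover time non-increasing in each individual edge conductance when the stationary distribution is held fixed by compensating laziness? Settling even this special case would resolve the conjecture and, as the authors note, would noticeably simplify Section~\ref{casec2}.
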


In the absence of a proof of this conjecture, we must find a work around.
We observe for later that if every edge $e$ has a
weight $\k(e)\in [\k_L,\k_U]$ then
we have
\beq{phiS}
\F(S)\geq \frac{\k_L\partial S}{\k_U\deg(S)}
\eeq
where $\partial S$ is defined following \eqref{conductance}.

We now define the graph $G$. It will have vertex
set $V_\l^*=V_{\l}\cup\set{v_1,\x,v_2}$, see \eqref{V1}.
A $G_0^*$-edge $f$ contained in $V_{\l}$ will give
rise to an edge of weight $\k_f$ in $G$.

Next let $N_1'$ be the set of vertices in $V_{\l}$
that have $K_F$-neighbors in $V_\s$ and let $N_1=N_1'\cup\set{v_1,\x,v_2}$.
The
edges from $N_1$ to $V_\s$ will also give rise to $G$ edges.
For each $x\in V_\s\cup N_1$ and $y\in N_1$
we define $\th(x,y)$ as follows:
Consider the random walk $\CW{G_0^*}_x$. This starts at $x$ and it
chooses to cross an incident edge of the
current vertex
with probability proportional to its $G_0^*$-edge weight.
Suppose that this walk follows the sequence $x_0=x,x_1\in V_\s,x_2,\ldots,$
and that $k,k\geq 1$ is the smallest positive index such that $x_k\notin V_\s$.
Then, $\th(x,y)=\Pr(x_k=y)$. Then for
$x\in N_1$ and $z\in V_\s$ for which $f=(x,z)$ is an edge of
$G_0^*$ and $y\in N_1$ ($y=x$ is allowed) we add a {\em special} edge,
{\em oriented} from $x$ to $y$
of weight $\k_f\th(z,y)$. We remind the reader that $\k_f=\ell^*/\ell_f$.

We have introduced some orientation to the edges. We need to check that the
Markov chain we have
created is reversible. Then we can use conductance to estimate the mixing time.
In verifying this claim we will see that the steady state of the
walk is proportional to
$\k(x)$ for $x\in V_{\l}$. We do this by checking detailed
balance. For $x,y\in V_{\l}^*$ we let $P(x,y)$
be the probability
of moving in one step from $x$ to $y$. We let $P(x,y)=P_0(x,y)+P_1(x,y)$
where $P_0(x,y)$ is the
probability of following a special edge from $x$ to $y$.
We have $\k(x)P_1(x,y)=\k(y)P_1(y,x)$
because these
quantities are derived from the random walk on $G_0^*$. As for $P_0(x,y)$, we have
\begin{align*}
\k(x)P_0(x,y)&
=\sum_{z_0\in V_\s}\sum_{z_1,z_2\ldots z_l}\k(x)P_1(x,z_0)
\prod_{i=0}^{l-1}P_1(z_i,z_{i+1})\times P_1(z_l,y)\\
&=\sum_{z_0\in V_\s}\sum_{z_1,z_2\ldots z_l}\k(z_0)P_1(z_0,x)
\prod_{i=0}^{l-1}P_1(z_i,z_{i+1})\times P_1(z_l,y)\\
&\ \vdots\\
&=\k(y)P_0(y,x).
\end{align*}
As a further step in the construction of $G$, we remove some loops. In particular,
if $x\in N_1$ and $p=P(x,x)>0$ then
$$P(x,x)\gets 0\text{ and }P(x,y)\gets P(x,y)/(1-p)\text{ for }y\in N_1,y\neq x.$$

Because the chain is reversible we can define an associated electrical network
$\cN$,
which is an undirected graph with an edge $(x,y)$ of weight (conductance)
$C_{x,y}=\k(x)P(x,y)=\k(y)P(y,x)$.

We claim that we can couple $\cX_1=\ttW$ and $\cX_2=\CW{G}$ where
$\ttW$ is the projection of $\CW{G_0^*}$ onto $V_{\l}$. This walk is $\CW{G_0^*}$
with visits to $V_\s$ omitted from the
sequence of states. Indeed, we have designed $G$ so that for each $v,w\in V_{\l}$
$$\Pr(\cX_1(t+1)=w\mid \cX_1(t)=v)=\Pr(\cX_2(t+1)=w\mid \cX_2(t)=v).$$
\ignore{
In terms of time we will charge two steps for following an oriented edge.
To get ``real $\CW{G_0}$
time'' we would have to add time spent by $\CW{G_0}$ crossing edges inside $V_\s$.
One way to do
this is to replace $\CW{G}$ by the walk $\CW{G^*}$
where we carry out $\CW{G_0}$ but stop
the clock after we enter $V_0$ and re-start it just before we leave $V_0$.
With this definition, $\CW{G^*}$ is a random walk on $V_\s$ which is identical
in terms of time taken to cover vertices of $V_{\l}$ as $\CW{G}$.
}
\begin{Remark}\label{rem1}
The reader can now see why we defined $V_\s$ in the way we did.
If we had stopped with $\wV_\s$ then $G_0$ might contain
isolated vertices.
\end{Remark}

{\bf Coupling $\CW{G_0},\CW{G}$ and $\CW{\tTC}$:}

We consider the vertices $V_0$ of $G_0$ to be a subset of the vertices of $\tTC$.
We couple $\CW{\tTC}$ with a random
walk $\sW$ on $G_0$.
In the walk $\sW$ edges are selected with probability proportional
to their weight/conductivity. We will now check that there is a natural
coupling.

Suppose that $\cWa$ is at a vertex $v\in V_0$. Suppose that $v$
has neighbours $w_1,w_2,\ldots,w_d$ in $G_0$
and that $f_i=(v,w_i)$ for $i=1,2,\ldots,d$.
In $\tTC$ there will be corresponding paths
$P_i$ from $v$ to $w_i$.
Let $i^*\in [d]$ be the
index of the path whose other endpoint is next reached by $\CW{\tTC}$.
Then if $\ell(P)$ is the length of a path $P$, we prove below that
\beq{coup}
\Pr(i^*=i)=\frac{\ell(P_i)^{-1}}{\ell(P_1)^{-1}+\cdots+\ell(P_d)^{-1}}=
\frac{\k_i}{\k_1+\cdots+\k_d}
\eeq
where $\k_i=\k(f_i)$.

This can be proved by induction. Let $\ell_i=\ell(P_i),\,i=1,2,\ldots,d$.
Our induction is on $L=\ell_1+\cdots+\ell_d$.
The base case where $\ell_i=1$ for $i=1,2,\ldots,d$ is trivial. Now
suppose that $\ell_1\geq 2$. Then if $\Pi=\Pr(i^*=1)$,
\beq{coup1}
\Pi=\frac{(\ell_1-1)^{-1}}{(\ell_1-1)^{-1}+\ell_2^{-1}+\cdots+
\ell_d^{-1}}\brac{\frac{\ell_1-1}{\ell_1}+\frac{\Pi}{\ell_1}}.
\eeq
{\bf Explanation:} The factor $\frac{(\ell_1-1)^{-1}}{(\ell_1-1)^{-1}+
\ell_2^{-1}+\cdots+\ell_d^{-1}}$
is, by induction, the probability that the walk reaches the penultimate
vertex of $P_1$ and then $\frac{\ell_1-1}{\ell_1}$ is the probability
that the walk reaches the end of $P_1$ before going back to $v$.
The term $\frac{\Pi}{\ell_1}$ is then the probability that $i^*=1$
in the case that the walk returns to $v$.

Equation \eqref{coup} follows from \eqref{coup1} after a little algebra.

Note that \eqref{coup} is the probability that
$\sW$ chooses to move to $w_i$ from $v$.
Thus we see that
$\cWa$ and $\sW$ can be coupled so that they go through the exact
same sequence of vertices in $V_0$, although
$\sW$ moves faster.

The
 expected relative speed of these walks can be handled
with the following lemma.
\begin{lemma}\label{mm7}
Suppose that $T$ is a tree consisting
of a root $v$ and $k$ paths $P_1,P_2,\ldots,P_k$
with common vertex $v$ and no other common vertices. Path $P_i$ has length $\ell_i$
for $i=1,2,\ldots,k$.
A walk $\cW$ starts at $v$.
\begin{enumerate}[(a)]
\item The expected time $\La$
for $\cW$ to reach a leaf is given by
$$\La=\frac{\ell_1+\cdots+\ell_k}{\sum_{i=1}^k\ell_i^{-1}}.$$
\item If $\ell_i\leq \ell$ for $i=1,2,\ldots,k$ then $\La\leq \ell^2$.
\end{enumerate}
\end{lemma}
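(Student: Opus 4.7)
My plan is to solve the expected hitting time equations directly. Let $\Lambda = h(v)$ and for $0 \leq j \leq \ell_i$ let $h_i(j)$ denote the expected hitting time of $L=\{\text{leaves of }T\}$ starting from the vertex on $P_i$ at distance $j$ from $v$. The boundary conditions are $h_i(0) = \Lambda$ (a common value for all $i$, since all paths share the root) and $h_i(\ell_i) = 0$.

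For $1 \leq j \leq \ell_i-1$ the vertex is interior on a path, has degree $2$, and so the standard one-step analysis gives
\[
h_i(j) = 1 + \tfrac{1}{2}h_i(j-1) + \tfrac{1}{2}h_i(j+1).
\]
The general solution of this inhomogeneous recurrence is $h_i(j) = A_i + B_i j - j^2$. Imposing $h_i(0)=\Lambda$ and $h_i(\ell_i)=0$ yields $A_i = \Lambda$ and $B_i = \ell_i - \Lambda/\ell_i$, hence
\[
h_i(j) = \Lambda + \left(\ell_i - \tfrac{\Lambda}{\ell_i}\right) j - j^2, \qquad\text{in particular}\qquad h_i(1) = \Lambda + \ell_i - \tfrac{\Lambda}{\ell_i} - 1.
\]

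Next I would apply the one-step analysis at the root $v$, which has degree $k$ with neighbours being the distance-$1$ vertices of each $P_i$:
\[
\Lambda = 1 + \frac{1}{k}\sum_{i=1}^k h_i(1) = 1 + \Lambda + \frac{1}{k}\sum_{i=1}^k \ell_i - \frac{\Lambda}{k}\sum_{i=1}^k \frac{1}{\ell_i} - 1.
\]
The $1$'s and $\Lambda$'s cancel, leaving $\Lambda \sum_i \ell_i^{-1} = \sum_i \ell_i$, which is exactly the claimed formula in (a). (One small sanity check to record: when $k=1$ the tree is a single path of length $\ell_1$ and the formula correctly gives $\Lambda = \ell_1^2$, the classical hitting time from one endpoint to the other.)

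For part (b), I simply bound the formula: if $\ell_i \leq \ell$ for all $i$, then $\sum_{i=1}^k \ell_i \leq k\ell$ and $\sum_{i=1}^k \ell_i^{-1} \geq k/\ell$, so
\[
\Lambda = \frac{\sum_i \ell_i}{\sum_i \ell_i^{-1}} \leq \frac{k\ell}{k/\ell} = \ell^2,
\]
as required. There is no serious obstacle here; the only thing to be careful about is ensuring the general solution of the recurrence is set up correctly (particular solution $-j^2$) and that the boundary condition $h_i(0)=\Lambda$ is shared across all $i$ so that the equation at the root closes the system in a single unknown.
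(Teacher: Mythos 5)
Your proof is correct, and it takes a genuinely different route from the paper's. The paper proves (a) by combining the commute-time identity
\[
\E(\text{time $v\to$ leaves})+\E(\text{time back to }v)=2|E(T)|\cdot R_{\mathrm{eff}}(v,\text{leaves})=\frac{2(\ell_1+\cdots+\ell_k)}{\sum_i\ell_i^{-1}}
\]
with the escape-probability computation \eqref{coup} (that the leaf of $P_i$ is reached first with probability $\ell_i^{-1}/\sum_j\ell_j^{-1}$) and the fact that the return time from that leaf is $\ell_i^2$; subtracting yields $\Lambda$. You instead write down the hitting-time linear system on each path, solve it exactly with the general solution $A_i+B_ij-j^2$, and close the system with the one-step equation at $v$. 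Your computation is consistent at the edge case $\ell_i=1$ (then $h_i(1)=0$, as required), and the cancellations in the root equation are correct. Your approach is more elementary and self-contained: it does not rely on the electrical-network commute-time formula nor on \eqref{coup}, both of which the paper has available as prior infrastructure and therefore exploits for a shorter derivation. For (b), the paper argues monotonicity of $\Lambda$ in each $\ell_i$, while you bound numerator and denominator separately by $k\ell$ and $k/\ell$; both are one-liners and equally valid.
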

\begin{proof}[\textbf{\emph{Proof}}]

(a) Observe that
\beq{RW3}
\E(\text{time to reach a leaf})+\E(\text{time back to }v)
=\frac{2(\ell_1+\cdots+\ell_k)}{\sum_{i=1}^k\ell_i^{-1}}.
\eeq
The RHS is twice the number of edges in $T$ times the effective
resistance between $v$ and the set of leaves.
(see e.g. \cite{LPW}, Proposition 10.6)

It follows from \eqref{coup} and the fact that a simple random walk takes $\ell^2$
steps in expectation to move $\ell$ steps in distance that
$$\E(\text{time back to }v)=\sum_{i=1}^k
\frac{\ell_i^{-1}}{\sum_{i=1}^k\ell_i^{-1}}\times \ell_i^2.$$
Part (a) of the lemma follows.

(b) We simply observe that increasing $\ell_i$ increases
the numerator and decreases the
denominator.

This completes the proof.
\end{proof}

We next observe that in this coupling, if $\sW$ has covered all of the
{\em edges} of $G_0$ then $\cWa$ has covered
all of the edges of $\tTC$, and so the edge cover time
of $G_0$, suitably scaled, is an
upper bound on the edge and hence vertex cover time
of $\tTC$.

It follows from Lemma \ref{mm7}(b) and the fact that all sub-paths
have length at most $(1+o(1))\ell^*$ that
that if $D_u$ is the expected time for the walk $\cW_u$ on $\tTC$
to cover all the edges of $\tTC$ and $D_v^*$ is the
expected time for the walk $\sW_v$ on
$G_0$ to cover all the edges of $G_0$, then
\beq{mm9}
\Tc=\max_uC_u\leq \max_uD_u\leq (1+o(1))(\ell^*)^2(\max_vD_v^*+1).
\eeq
(The +1 accounts for the case when $u$ is in the middle of a sub-path).

In the same way, we can couple $\CW{G_0}$ and $\CW{G}$, up until the first visit
to $\x$, in the following sense.
We can consider the latter walk to be the former, where we
ignore visits to $V_\s$. By construction,
if $v\in V_\l, w\in V_{\l}^*$
then for both walks we have that $w$ has
the same probability of being the next
vertex in $V_{\l}^*=V_\l\cup\set{\x}$
that is visited by the walk.
We will show in Section \ref{complete} that the
time spent
in $V_\s$ is negligible.

\subsection{Conditions of Lemma \ref{MainLemma} for $G$}\label{subsec:cond-mainlemma}

{\bf Checking \eqref{lem-RtAv-assumption1} for $G$}:\\
We first claim that we have
\beq{replace}
\Tm(G)=O(\om^{2}\ln^5M).
\eeq
Let $\tG=(V_\l,E_\l)$ be the subgraph of $K_F$ induced by $V_{\l}$.
We begin by estimating the conductance of $\tG$, as in \eqref{conductance}.
Let $\Pi_{\b,s},0\leq \b\leq 1\leq s\leq s_0=\om^{-1/3}N^{1+2\z_0}$
be the probability
that there is a {\em connected} set $S\subseteq V_{\l}$ with
$|S|=s$ and $e_K(S)=\b\deg(S)/2\geq |S|$ and $e_K(S:V_\s)\geq (1-\b)\deg(S)/2$.
(Here $e_K(S)$ is the number of $G_\l$ (or $K_F$)
edges contained in $S$ and $e_K(S:V_\s)$
is the number of edges joining $S$ and $V_\s$ in $K_F$).
\begin{lemma}\label{mn2}
The following holds simultaneously and
w.h.p.\ for every set $S\subseteq V_{\l}$ that induces a connected
subgraph of $\tG$: In the following, $e_\l(S:\bar{S})$ is the number of
$G_\l$ edges joining $S$ to $\bar{S}=V_\l\setminus S$. Note that

\begin{enumerate}[(a)]
\item If (i) $|S|\leq s_0$ and (ii) $e(S)=\b \deg(S)/2\geq |S|$ then
$$e_\l(S:\bar{S})\geq \frac{(1-\b)\deg(S)}{2}.$$
\item If $e(S)=|S|-1$ then
$$e_\l(S:\bar{S})\geq \frac{2\deg(S)}{3s_1},$$
where $s_1=\frac{10000\ln N}{\ln \om}$.
\end{enumerate}
\end{lemma}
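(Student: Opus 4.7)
Both parts rest on the kernel-degree decomposition
\[
\deg(S)=2e_K(S)+e_\l(S:\bar S)+e_K(S:V_\s),
\]
which, together with $\b\deg(S)=2e_K(S)$, rearranges to $(1-\b)\deg(S)=e_\l(S:\bar S)+e_K(S:V_\s)$. Both conclusions therefore amount to an upper bound on $e_K(S:V_\s)$. A key deterministic input is that every $v\in V_\l$ has at most one distinct $K_F$-neighbor in $V_\s$: if it had two, the iteration defining $V_\s$ in \eqref{V1} would have absorbed $v$. This gives $e_K(S:V_\s)\leq |S|$, up to negligible multi-edge corrections that can be absorbed in an additive $O(1)$ using the standard configuration-model estimate $P_S=\Omega(1)$ from \eqref{psimple}.

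For part (b), this deterministic bound already suffices. Using $e_K(S)=s-1$ and the kernel minimum degree $d\geq 3$, we get $e_\l(S:\bar S)\geq \deg(S)-2(s-1)-s=\deg(S)-3s+2$. For $|S|\leq s_1$, the required inequality $\deg(S)-3s+2\geq 2\deg(S)/(3s_1)$ rearranges to $\deg(S)(1-2/(3s_1))\geq 3s-2$, which follows from $\deg(S)\geq 3s$ together with $s\leq s_1$. For $|S|>s_1$ I would split on whether condition (ii) of \eqref{shou} holds. If $\deg(S)\leq s\ln N$, then (i)--(ii) both hold yet $|S|>s_1$, so Lemma \ref{shou1} forbids (iii), forcing $e_K(S:V_\s)<\eta_s\leq s/500+3$ and hence $e_\l(S:\bar S)\geq 0.998 s-1$, which exceeds $2\deg(S)/(3s_1)\leq s\ln\om/15000$ because $\ln\om=o(\ln N)$ and $s_1\to\infty$. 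If instead $\deg(S)>s\ln N$, then the coarse bound $e_K(S:V_\s)\leq s$ is dwarfed by $\deg(S)$, yielding $e_\l(S:\bar S)=(1-o(1))\deg(S)\gg 2\deg(S)/(3s_1)$.

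For part (a), the deterministic bound $e_K(S:V_\s)\leq s$ is too weak: when $\deg(S)$ is close to its floor $3s$, the target slack $(1-\b)\deg(S)/2\geq \deg(S)/2-1.001 s$ can be as small as $0.5 s$. I would therefore prove a probabilistic refinement by a first-moment calculation in the configuration model, mirroring the proof of Lemma \ref{shou1}. For each $s\leq s_0$, each $t$ in the narrow range $s\leq t\leq 1.001 s$ pinned by Lemma \ref{lem4}, and each $r>(1-\b)\deg(S)/2$, I would bound the expected number of connected $S\subseteq V_\l$ with $|S|=s$, $e(S)=t$, and $e_K(S:V_\s)\geq r$ by choosing configuration points for the $t$ internal pairs (cost $\deg(S)/M$ each) and for the $r$ $V_\s$-pairs (cost $D_\s/M=O(N^{\z_0-\z_1/3})$ each, via Lemma \ref{degVsigma}), and then summing geometrically over $(s,t,r)$ to $o(1)$ using the gap $\z_1\gg\z_0$ from \eqref{delta1}. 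The main obstacle is precisely this part (a) bookkeeping: no tight deterministic bound is available, and the calculation succeeds only by balancing Lemma \ref{lem4} (which controls the entropy of admissible $(S,e(S))$ pairs) against Lemma \ref{degVsigma} (which pays per $V_\s$-edge), neither input being individually sufficient.
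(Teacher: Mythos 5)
Your part (b) is essentially the paper's argument: the key inputs are the identity $\deg(S)=2e_K(S)+e_\l(S:\bar S)+e_K(S:V_\s)$, the saturation of $V_\s$ (so $e_K(S:V_\s)\leq|S|$), and Lemma~\ref{shou1}, split into the three cases $|S|\leq s_1$, $|S|>s_1$ with $\deg(S)>s\ln N$, and $|S|>s_1$ with property~(iii) of \eqref{shou} failing. One small slip: in the middle case you compare your lower bound $0.998s-1$ against the \emph{envelope} $s\ln\om/15000\geq 2\deg(S)/(3s_1)$, but $\ln\om=\z_1\ln N$ is not bounded, so that numeric comparison can fail. The correct step is to keep $\deg(S)$ on both sides: $e_\l(S:\bar S)\geq\deg(S)-2.002\,s$ and, since $\deg(S)\geq 3s$, one has $\deg(S)\bigl(1-2/(3s_1)\bigr)\geq 2.002\,s$ for $s_1\geq 3$. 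The conclusion holds; only your intermediate inequality does not. For part (a) you correctly identify that $e_K(S:V_\s)\leq|S|$ is deterministically insufficient (since $\b$ forced by the hypothesis can exceed $2/3$) and that a configuration-model first-moment count over $(S,e(S),e_K(S:V_\s))$ is needed. However, your route differs from the paper's: you condition on Lemma~\ref{lem4} to pin $e(S)\in[s,1.001s]$ before counting, whereas the paper's estimate \eqref{whatsit} carries the internal-edge binomial $\binom{\deg(S)}{\b\deg(S)/2}(\deg(S)/M)^{\b\deg(S)/2}$ along and handles \emph{all} densities $\b$ uniformly, with the dense regime $\b\geq 3/4$ killed directly in \eqref{noo1}. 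Lemma~\ref{lem4} is not invoked in the paper's proof of this lemma, so your assertion that the calculation ``succeeds only by balancing Lemma~\ref{lem4} against Lemma~\ref{degVsigma}'' describes your plan but not theirs. Your plan is plausible, but you leave the counting unexecuted: in particular you need to sum also over $\deg(S)$, to control the choose-$V_\s$-points binomial $\binom{\deg(S)}{r}$ when $\deg(S)\gg s$ (the paper does this by writing it as $(2e/(1-\b))^{(1-\b)\deg(S)/2}$), and to verify $s_0\leq n_0$ (which does hold since $\z_1\gg\z_0$) so that Lemma~\ref{lem4} applies on the full range $|S|\leq s_0$.
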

\begin{proof}[\textbf{\emph{Proof}}] (a)
We estimate $\Pi_{\b,s}$ from above by
\beq{whatsit}
\Pi_{\b,s}\leq \sum_{|S|=s}
\binom{\deg(S)}{(1-\b)\deg(S)/2}
\bfrac{N^{1-C\z_0}}{M}^{(1-\b)\deg(S)/2}\binom{\deg(S)}{\b\deg(S)/2}
\bfrac{\deg(S)}{M}^{\b\deg(S)/2}.
\eeq
where $C$ can be any positive constant.

{\bf Explanation:}
We choose configuration points that will be paired with $V_\s$ in
$\binom{\deg(S)}{(1-\b)\deg(S)/2}$ ways.
The probability that all these points are paired in $V_\s$ is at most
$$\bfrac{\deg(V_\s)}{2M-\deg(S)}^{(1-\b)\deg(S)/2}\leq
\bfrac{N^{1-C\z_0}}{2M-\deg(S)}^{(1-\b)\deg(S)/2},$$
see \eqref{dvsig}.
We choose $\b\deg(S)/2$ configuration points for the edges inside $S$.
The probability they are paired with
other points associated with $S$
can be bounded by $\bfrac{\deg(S)}{2M-o(M)}^{\b\deg(S)/2}$.

Using \eqref{whatsit} we see that
\begin{align}
\Pi_{\b,s}&\leb \sum_\d\sum_{\substack{|S|=s\\ \deg(S)=\d s}}
\bfrac{2e}{1-\b}^{(1-\b)\d s/2}\bfrac{N^{1-C\z_0}}{M}^{(1-\b)\d s/2}
\bfrac{2e}{\b}^{\b \d s/2}\bfrac{\b \d s}{M}^{\b \d s/2}
\nonumber\\
&\leq \sum_\d\sum_{\substack{|S|=s\\ \deg(S)=\d s}}\brac{2(N^{-C\z_0})^{1-\b}
\bfrac{2e\d s}{N}^{\b}}^{\d s/2}.\label{d(S)}
\end{align}
We first consider the case where $3\leq \d\leq A=N^{\z_0}$.
Let $\th_{\d,s}$ be the proportion of sets of size $s$ that have $\deg(S)=\d s$.
In which case, \eqref{d(S)} becomes
\begin{align}
\Pi_{\b,s}&\leb \sum_\d\th_{\d,s}
\binom{N}{s}\brac{2eN^{-C(1-\b)\z_0}\bfrac{2eAs}{N}^{\b}}^{\d s/2}\nonumber\\
&\leq \sum_\d\th_{\d,s}\brac{2e^2N^{-C(1-\b)\z_0 /2}
\bfrac{s}{N}^{\b/2-1/\d}A^{\b/2}}^{\d s}.
\label{res}
\end{align}
At this point we observe that by assumption, we have $\b\deg(S)/2\geq |S|$ and so
\beq{bd}
\frac{\b\d}{2}\geq 1.
\eeq
Now because $\d\geq 3$ and $\sum_\s\th_{\d,s}=1$,
we have
\begin{align}
&\Pi_{\b,s}\leb \sum_\d\th_{\d,s}\brac{2e^2A^{\b/2}\bfrac{s}{N}^{1/24}}^{\d s}\leq
\bfrac{s}{N}^{s/16}\qquad \text{if }\b\geq 3/4.
\label{noo1}\\
&\Pi_{\b,s}\leb \sum_\d\th_{\d,s}\brac{2e^2A^{\b/2}N^{-C\z_0/4}}^{\d s}\leq
N^{-3C\z_0s/8}
\qquad \text{if }\b\leq 3/4\text{ and }C\geq 2.\nonumber
\end{align}

Now the number of choices for $\b$ can be bounded by $\deg(S)$ and we bound
this by $N^{\z_0}s$. This gives, for this case,
$$
\sum_{\b,s}\Pi_{\b,s}\leq \sum_{s=1}^{s_0}N^{\z_0}s\bfrac{s}{N}^{s/16}
+\sum_{s=1}^{s_0}N^{\z_0}sN^{-3C\z_0s/8}=o(1),
$$
if $C\geq 3$.

We now consider those $S$ for which $\deg(S)\geq A|S|$. Going back to \eqref{d(S)}
we see that for these we have
\begin{align*}
 \Pi_{\b,s}&\leb \sum_\d \th_{\d,s}\binom{N}{s}\brac{2eN^{-C(1-\b)\z_0/2}
\bfrac{2eN^{\z_0}s}{N}^{\b}}^{As/2}\\
&\leq \sum_\d \th_{\d,s}\brac{4e^{1+2/A}N^{-C(1-\b)\z_0}\bfrac{s}{N}^{\b-2/A}
}^{As/2}
\end{align*}
This yields
\begin{align}
&\Pi_{\b,s}\leq \bfrac{s}{N}^{As/5}\qquad \text{if }\b\geq 1/2.\label{noo2}\\
&\Pi_{\b,s}\leq \brac{4e^{1+o(1)}N^{-C\z_0/2}}^{As/2}
\qquad \text{if }\b\leq 1/2.\nonumber
\end{align}
and we can easily see from this that $\sum_{\b,s}\Pi_{\b,s}=o(1)$
in this case too,
for $C\geq 3$. Thus
w.h.p.
$$e(S:V_{\l})=\deg(S)-2e(S)-e(S:V_\s)\\
=\deg(S)-\b\deg(S)-e(S:V_\s)\geq (1-\b)\deg(S)/2.$$
(b)
Now consider sets with $e(S)=|S|-1$ and use Lemma \ref{shou1}.
If $|S|> s_1$ then either $\deg(S)>s\ln N$
or $e(S:V_\s)\leq \rdup{s/500}$. The former implies that
$$\frac{e(S:V_{\l})}{\deg(S)}\geq \frac{\deg(S)-2(|S|-1)-|S|}{\deg(S)}=1-o(1)$$
and the latter implies that
$$\frac{e(S:V_{\l})}{\deg(S)}\geq
\frac{\deg(S)-2(|S|-1)-\rdup{|S|/500}}{\deg(S)}>\frac{249}{250}.$$
If $|S|\leq s_1$ then
and since $\deg(S)\geq 3|S|$,
\begin{equation*}
\frac{e(S:V_{\l})}{\deg(S)}\geq \frac{\deg(S)-2(|S|-1)-|S|}{\deg(S)}\geq
\frac{2}{3|S|}\geq \frac{2}{3s_1}.\qedhere
\end{equation*}
\end{proof}

We verify next that if $S\subseteq V_0$ and $|S|$ is too close to
$N$ then $\k(S)$ will exceed $\k(G)/2$. Suppose then that $|S|\geq (1-\eta)N$
where $2a_0\eta^{1/3}=\e_{1}$ of Lemma \ref{smallweight}. It follows from
Lemma \ref{smalldegree} that $\deg_{K_F}(V_K\setminus S)\leq 2a_0\eta^{1/3}N=
\e_{1}N$.
It then follows from Lemma \ref{smallweight} that
$$\sum_{\substack{e\in E_K\\e\cap S=\emptyset}}\ell_e\leq
\frac{\e_1^{1/2} M}{\xi}\text{ and hence }\sum_{\substack{e\in E_K\\
e\cap S\neq\emptyset}}\ell_e\geq 2m-
\frac{2\e_1^{1/2} M}{\xi}\geq (2-3\e_1^{1/2})m.$$
It follows from this and Lemma \ref{E(G_0)} that
\beq{bigset}
\k(S)\geq \brac{1-\frac{3\e_1^{1/2}}{2}}\k(G_0).
\eeq
It is shown in \cite{ACF} that if $S\subseteq V_K$, then in $K_F$ we have
\beq{mn1}
e(S:V_K\setminus S)\geq \deg(S)/50\text{ for all sets $S$ with $\deg(S)\leq M$}.
\eeq
Now suppose that $S\subseteq V_0$ and $\k(S)\leq \k(G_0)/2$. It follows from
\eqref{bigset} that $|S|\leq (1-\eta)N$. This implies that
$\deg_{K_F}(S)\leq 2M-3\eta N$.

If $\deg_{K_F}(S)\leq M$ then \eqref{mn1} implies that $e(S:\bar{S})
\geq \deg(S)/50$.

If $\deg_{K_F}(S)> M$ then $3\eta N\leq \deg_{K_F}(\bar{S})\leq M$ and hence
$e(S:\bar{S})\geq 3\eta N/50\geq (3\eta/50a_0)\deg(S)$.

It follows that if $\k(S)\leq \k(G_0)/2$ then
\beq{sweat}
e_{\tG}(S:V_\l)\geq \begin{cases}\frac{2d(S)}{3s_1}&|S|\leq s_0\\
\frac{3\eta}{50a_0}\deg(S)-\deg(V_\s)\geq \frac{3\eta}{50a_0}\deg(S)-
\frac{6N^{1+\z_0}}{\om^{1/3}}\geq\frac{2\eta}{50a_0}\deg(S)&s_0< |S|
\leq (1-\eta)N\end{cases}
\eeq
Now every heavy edge of $G_0$ has weight at least 1/2. Applying the
argument for \eqref{PhiG} we see that \eqref{sweat} implies that
$$\F(G_0)=
\min_{\substack{S\subseteq V_0\\\k(S)\leq \frac12\k(V_0)}}\F_{G_0}(S)
=\Omega\bfrac{1}{\am}\times \min_{\substack{S\subseteq V_K\\
|S|\leq (1-\eta)N}}
\frac{e_{\tG}(S:V_\l)}{\deg(S)}=\Omega\bfrac{1}{ \om\ln^2 M}.$$
\ignore{
Let $\wG$ be obtained from $\tG$ by adding
weighted edges between vertices in $N_1$ as we
did just after \eqref{phiS}. In the resulting
graph, the weights of
vertices will be within a bounded factor of their degrees.
This bound lies in
$[1/3+o(1),1]$. This is
because degrees in $V_\l$ are at least three and $v\in V_\l$
has at most one neighbour in $V_\s$. Furthermore,
the weight of an edge of $G_0$ derived from a heavy edge
of $K_F$ is at least 1/2.
So,
\beq{310}
\F_{\wG}(S)\geq \frac{\F_{\tG}(S)}{3+o(1)}\geq \frac{e(S,V_{\l})}{4\deg(S)}.
\eeq
It follows from this that
$$
\F_{\wG}(S:V_\l)\geq \frac1{4}\begin{cases}
           \frac2{3s_1}&|S|\leq s_0\\
           \frac{1}{100}&|S|>s_0
          \end{cases}
$$
This shows that $\F(\wG)=\Omega(1/\ln N)$. We now go
back to the argument for \eqref{PhiG} and argue
that $\F(G)=\Omega(\F(\wG)/(\om\ln M))$. Here we use
the fact that w.h.p no edge of $K_F$ contains
more than $\am=O(\om\ln M\times \ell^*)$ vertices of degree two.
}
Taking account of
the special edges introduced to bypass most of the light edges can only increase the
conductance of a set. This is because it won't
affect the denominator in the definition of conductance, but it
might increase the numerator.

All that is left is to consider
the effect of splitting
the edge $f^*$ into a path of length two in order to define $G_0^*=G_0^*(f^*)$.
The conductance of a connected set $S$ not containing $v_1$ or $v_2$
is not affected by this change. If $S$ contains $v_1,v_2$ then
after the split, the numerator remains the same. On the other hand,
the denominator can at most double. If $S$ contains one of $v_1,v_2$ then
the numerator still remains the same and again the denominator can at most double.

Thus
$\F(G)=\Omega(\F(G_0))$.
Equation \eqref{replace} now follows from $\Tm(G)=O(\F^{-2}\ln M)$.

We then have
\beq{zxc1}
\Tm(G)\p_{G}(\x)=O\bfrac{\om^2\ln^5M}{\om M}=o(1).
\eeq

\parindent 0in
{\bf Checking \eqref{lem-RtAv-assumption2} for $G$:}\\
Let $f^*=(v_1,v_2)$ as before.
Suppose that $v_1$ is one of the vertices that
are placed on a $\tKer$ edge $f=(w_1,w_2)$. We allow $v_1=w_1$ here.
We now remind the reader that w.h.p.\ all $\tKer$-neighborhoods
up to depth $2L_0$ contain at most
one cycle, see Lemma \ref{non}(b).
Let $X$ be the set of kernel vertices that are
within kernel distance $L_0$ of $f$ in $\tKer$.
Let $\La_f$ be the sub-graph of $G$ obtained as follows:
Let
$H$ be the subgraph of the kernel induced by $X$. This definition includes $f$ as
an edge of $H$.
If $H$ contains no members of $V_\s'=V_\s\setminus\set{v_1,v_2}$ then we do nothing.
Otherwise, let $T$ be a component of the subgraph of
$H$ induced by $V_\s'$ and let
$L=\set{v_0,v_0',v_1,\ldots,v_s}\subseteq N_1$ be the neighbours of $T$ in
$V_\l^*$
where $v_0,v_0'$ are the vertices in $L$ that are closest to $\set{w_1,w_2}$.
Here $v_0=v_0'$ is allowed and this is indeed occurs in the majority of cases w.h.p.
Note also that by the construction of $V_\s$,
each $v_i,i\geq 1$ has one neighbour in $T$.
We replace $T$ by special edges $(v_0,v_i),(v_0',v_i),(v_i,v_0),(v_i',v_0)
,i=1,2,\ldots,s$.
If $T$ contains a vertex $w$
that is at distance $L_0$ from
$\set{w_1,w_2}$ then we remove $T$ completely.

Next add vertices of degree two to the non-special edges of
$H$ as in the construction of the 2-core.
We obtain $\La_f$ by contracting paths as in the construction of $G_0$.
Vertices of $X$ that are at maximum kernel distance from $f$
in $\tKer$ are said to be at the frontier of $\La_f$. Denote these vertices by $\F_f$.

We now follow the argument in Section \ref{conditions0} between
``Let us make $\F_f$ into...'' and Lemma \ref{plm},
the proof of which requires some minor tinkering:
\begin{lemma}\label{plm1}
Fix $w\in \F_f$. Then
$$\Pr(\GW_w\text{ visits $f$ within time $\Tm$})=O(N^{-\d_0/2})=o(1).$$
\end{lemma}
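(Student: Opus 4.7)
The proof will mirror closely that of Lemma \ref{plm}, with modifications accounting for (i) the weighted structure of $G_0^*$, (ii) the smaller mixing-time estimate \eqref{replace}, and (iii) the oriented special edges in $\La_f$ that short-circuit $V_\s'$-components.

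First I would invoke Rayleigh's monotonicity principle to reduce to a near-unweighted tree comparison: since every heavy edge of $G_0^*$ carries conductance in $[1/2,1]$, scaling all heavy-edge conductances up to $1$ can only increase the escape probability from $w$ to $f$ in $\La_f$. Up to a bounded constant factor, the resulting network is the kernel neighborhood of $f$ to depth $L_0$, which by Lemma \ref{non}(b) contains at most one cycle. Modifications from special edges are localized: by the construction of $V_\s$ each $V_\s'$-component is at distance at most $L_0$ from $\{w_1,w_2\}$ (components reaching further are removed entirely) and is replaced by oriented weighted shortcuts; the resulting chain remains reversible by the detailed-balance check carried out after Lemma \ref{shou1}, so Rayleigh applies.

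Next I would run the biased-walk argument essentially verbatim from Lemma \ref{plm}. Along any path from $w$ to $f$ in the contracted kernel structure, at each clean kernel vertex on the path the walk has $\geq 2$ neighbors ``away from $f$'' (back along the path or into a side branch) against exactly $1$ ``toward $f$'', giving conditional probability $\leq 1/3$ of moving toward $f$. By Lemma \ref{frqt}, at most $L_0/10$ of the $L_0$ kernel positions on such a path lie within $V_\s$-components (replaced by special edges); at the remaining $\geq (9/10)L_0$ clean positions the $\geq 2/3$ bias applies. Feller's gambler's ruin \cite[p.~314]{Fe} therefore gives the single-excursion escape bound
\[
\Pr\bigl(\cY \text{ reaches } L_0 \text{ before returning to } 0\bigr) \leq \frac{1}{2^{(9/10)L_0 - O(1)} - 1} \leq N^{-(9/10)(\ln 2)\d_0 + o(1)}.
\]
A union bound over the $\Tm(G) = N^{o(1)}$ time steps, cf.\ \eqref{replace}, then yields $\Pr(\GW_w\text{ visits } f\text{ within time } \Tm) \leq N^{o(1)} \cdot N^{-(9/10)(\ln 2)\d_0}$. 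Since $(9/10)\ln 2 > 1/2$, and \eqref{delta0} gives $\d_0 \gg \ln\ln N/\ln N$ with $\z_1 = o(\d_0)$ by \eqref{delta1}, there is enough slack to absorb the $N^{o(1)}$ factor, and the bound is $O(N^{-\d_0/2})$.

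The main technical obstacle is ensuring that the oriented special edges, whose weights $\k_f \theta(z,y)$ encode hitting distributions within $V_\s$, do not undermine the biased-walk count by providing shortcuts to $f$. This is handled by Lemma \ref{frqt}, which limits the number of $V_\s$-occupied positions on any $w$-to-$f$ kernel path to $L_0/10$: treating every special-edge transition conservatively as a step ``away from $f$'' still leaves $\geq (9/10)L_0$ clean kernel vertices at which the $2/3$ bias holds. A secondary point is that the Rayleigh contraction step relies on reversibility of the walk on $G$, which is precisely the detailed-balance identity verified in the paragraph after Lemma \ref{shou1}.
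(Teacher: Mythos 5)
Your proposal matches the paper's proof in its essential structure: both reduce to the biased-walk/gambler's-ruin argument of Lemma~\ref{plm}, both invoke Lemma~\ref{frqt} to guarantee that at most $L_0/10$ kernel positions on any $w$-to-$w_1$ path lie in $V_\s$ (so that $\geq 9L_0/10$ biased positions survive contraction of special edges), and both close by multiplying the resulting excursion bound $\leq 2^{-9L_0/10+O(1)} = N^{-(9/10)(\ln 2)\d_0}$ by the much smaller mixing time $\Tm(G)=N^{o(\d_0)}$ (via $\z_1=o(\d_0)$), leaving room to spare for $O(N^{-\d_0/2})$. The paper's own write-up is terser --- it simply says ``follow the argument of Lemma~\ref{plm} until the end'' and then notes the $L_0 \mapsto 9L_0/10$ replacement --- whereas you spell out the Rayleigh-monotonicity reduction to unit conductances and the reversibility (detailed-balance) check for the chain on $G$, both of which the paper leaves implicit by inheritance from Lemma~\ref{plm}. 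This is a reasonable expansion of the same argument rather than a different route.
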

\begin{proof}[\textbf{\emph{Proof}}]
Let $P$ be one of the at most two paths $P,P'$
from $w$ to $w_1$ in $\tKer$; then $P=P'$ whenever $w_1$ is locally tree like.
Let $e_1,e_2,\ldots,e_{L_0}$ be the edges of $P$.
Assume first that neither of these paths contain a member of $V_\s$.
We will correct for this later. In this case we can follow the argument
of Lemma \ref{plm} until the end.

Suppose now that the paths contain members of $V_\s$.
It is still true that there are only one or
two paths from boundary vertex $w$ to $w_1$ or $w_2$.
The only change needed for the analysis is
to note that after contracting special edges these $K_F$
paths can shrink in length to $9L_0/10$.
Here we use Lemma \ref{frqt}. This
changes $2^{L_0-2}$ in \eqref{abso} to $2^{9L_0/10-2}$
and allows the proof to go through.
\end{proof}

The remainder of the verification follows as in Section \ref{conditions0}.
\ignore{
We need to take care to check \eqref{w1} which becomes
$$\Pr(\cW_w\text{ reaches $w_1$ within time }\Tm)\leq \Tm N^{-\d_0/4}=
O(\om^2N^{2\z_0}\ln^5M \times N^{-\d_0/4})=O(N^{-\d_0/5}).$$
}
\ignore{
\subsubsection{$f^*$ is light}\label{seclight}
We modify the definition of $G$ above, replacing vertex set $V_{\l}$ by vertex set
$V_\l^*=V_{\l}\cup\set{\x}$ and $N_1$
by $N_1\cup\set{\x}$. We proceed as in the previous case with these new definitions.

{\bf 5.4.2.1\ \ Conditions of Lemma \ref{MainLemma} for $G$}

{\bf Checking \eqref{lem-RtAv-assumption1} for $G$}:\\
We now only claim that we have
\beq{replace0}
\Tm(G)=O(\om^{2}N^{2\z_0}\ln^5M).
\eeq
We have to estimate the conductance of $\tG$.
We can use Lemma \ref{mn2} and \eqref{310} to deal with
sets $S$ not containing $\x$. This is because the
analysis there only relies on edges from
$S$ to $V_{\l}$.
This shows that
$$\F_{\wG}(S)=\Omega\bfrac{1}{s_1}.$$
Now suppose that $\x\in S$. Let $\wS=S\setminus \x$. We have already shown above that
$$e(\wS:V_{\l}\setminus S)\geq \frac{\deg(S)}{s_1}.$$
It follows that
$$\F_{\wG(S)}\geq \frac{1}{3}\frac{\deg(\wS)/s_1}{\deg(\wS)+\k(\x)}=
\Omega\bfrac{1}{s_1N^{\z_0}}.$$
The factor 1/3 accounts for adding weights to the edges and as already observed,
$\k(\x)=O(N^{\z_0})$.
Equation \eqref{replace0} follows.

\parindent 0in
{\bf Checking \eqref{lem-RtAv-assumption2} for $G$:}\\
The proof in Section 5.4.1.2 is only affected by the replacement
of $O(\om^2\ln^5M)$ in \eqref{replace}
by\\
 $O(\om^{2}N^{2\z_0}\ln^5M)$. This does not affect the argument, due to assumption
\eqref{delta0}.
}
\subsubsection{Analysis of a random walk on $G$}
This is similar to the analysis of Section \ref{RWGF0} and may seem a bit
repetitive.  We will first argue that
\beq{mm20}
\text{the edge cover-time of $G$ is w.h.p.
at most }
\frac{\om^{2}M\ln^2M}{8+o(1)}.
\eeq
After this we have to deal with the time spent crossing edges with at least one endpoint
in $V_\s$. This will be done in Section \ref{complete}.

We have a fixed vertex $u\in V_{\l}$ and an edge $f^*$ and we will
estimate an upper bound for
$\Pr(\cA_t(\x))$ using Lemma \ref{MainLemma}.
For this we need a good upper bound on $R_{\x}$.
Let $f=(w_1,w_2)$ be the edge of $\tKer$ containing $f^*$.
Recall the definition of $\La_f$ in Section~\ref{subsec:cond-mainlemma} where we were checking
\eqref{lem-RtAv-assumption2}.
If $f$ is locally tree like
let $T_1,T_2$ be the trees in $G_0$ rooted at
$w_1,w_2$ obtained by deleting the edges of
$\La_f$ that are derived from the edge $f$ of $\tKer$.
If $f$ is not locally tree like then we can
remove an edge of the unique cycle $C$ in $\La_f$
not incident with $\x$ from $\La_f$ and obtain trees $T_1,T_2$
in this way. Removing such an edge can only increase resistance and $R_f$.

We write $R_{\x}=R_{\x}'+R_{\x}''$ where $R_{\x}'$ is the expected number
of returns to $\x$ within time $\Tm$ before the first visit
to $\F_f$ and $R_{\x}''$ is the
expected number of visits after the first such visit.

\beq{resist}
R_{\x}'=2\a R_P
\eeq
where $R_P$ is the effective resistance as defined in Section
\ref{RWGF0}, but associated to the weighted network $\cN$.
Here $\a$ is the weight of the edge $f$ that we split.

We first assume that $\La_f$ contains no vertices in $V_\s$ and then in the final paragraph of Section~\ref{Rv}
we show what adjustments are needed for this case.

We will show in Section \ref{frontier} that
\beq{R''small}
R_{\x}''=o(R_{\x}').
\eeq

We first prune away edges of the trees $T_1,T_2$ tree-like
neighbourhoods to make the branching factor of the associated trees at most two.
Of course, in tree like neighborhoods we can say exactly two.
This only increases the effective resistance and $R_{\x}$.
Let $R_1,R_2$ be the resistances of the pruned trees and let
$R=R_1+R_2$.

We have
\begin{equation}\label{Rw}
\frac{1}{R_P}=\frac{1}{\a^{-1}+\ell_1/\ell^*+R_1}+\frac{1}{\a^{-1}+\ell_2/\ell^*+R_2}.
\end{equation}
Here $\ell_i/\ell^*$ is the total resistance
of the $G$ edges in the path from $v_i$ to $w_i$
derived from $f$.
If $v_1$ is a vertex of $\tKer$ then we can dispense with $\ell_2,R_2$.

Note that, with $\ell=\ell_1+\ell_2$,
\beq{ache}
\frac{1}{\a^{-1}+\ell_1/\ell^*+R_1}+\frac{1}{\a^{-1}+\ell_2/\ell^*+R_2}
\geq \frac{4}{4+\ell/\ell^*+R}
\eeq
(which follows from $\a\geq 1/2$ and the
arithmetic-harmonic mean inequality).

Let $\cEm$ be as defined before \eqref{ind0}
and note that given $\cEm$ we have
$\e=O\bfrac{3^{2L_0}\ln M}{\xi(M+\n_2)}=o(1)$, where $\e$ is defined
in Part~(a) of Lemma~\ref{lem2}.
We re-write \eqref{ind0} as
\beq{ind}
\Pr(R_1\geq\r_1,R_2\geq\r_2,L=(\ell_1+\ell_2)/\ell^*= \ell/\ell^*)\leq (1+\e)
 \wPr(R_1\geq\r_1)\wPr(R_2\geq\r_2)\wPr(\ell_1+\ell_2=l).
\eeq
Note next, that with $\ell=\ell_1+\ell_2$, and given $\a$ and that $\xi=o(1)$,
$$\wPr(L=(\ell_1+\ell_2)/\ell^*=\ell/\ell^*\mid\cEm)
\leq\xi(1-\xi)^{\ell-1}\leb\xi e^{-L/\om}.$$
We will show in Section \ref{Rv} that for $\r=M^{o(1)}$ we have
\beq{R1g}
\wPr(R_1\geq \r\mid \cEm)\leb 3^{L_0}e^{-\r/\om}
\eeq
This is a simpler expression than \eqref{R1g0} because here we have $\xi=o(1)$.

Let $Z_{L,\r_1,\r_2}$ be the random variable that is equal to the
number of vertices of $G_0$ with parameters $L,\r_1,\r_2$.
Then we have
\beq{numpaths}
\E(Z_{L,\r_1,\r_2})\leb \om M\times L\ell^*\times \xi e^{-L/\om} \times
3^{L_0}e^{-R/\om}
= 3^{L_0}\om MLe^{-(L+\r)/\om},
\eeq
where $\r=\r_1+\r_2$. (The factor $\ell_e=L\ell^*$ comes form the number of
choices of edge to split in path $P_e$).

Using Lemma \ref{MainLemma}
and \eqref{ache} we see that
\begin{multline}\label{1}
\E(\Psi(E(G_0),t))\leb 3^{L_0}\om\xi M\sum_{s\geq t}\ell^*\int_LdL
\int_{\r_1,\r_2}d_{\r_1}d_{\r_2} L e^{-(L+\r)/\om}\times\\
\brac{ \exp\set{-(1+o(1))\frac{s}{2\om M}\cdot
 \frac{4}{4+L+\r}}+ O(\Tm^2\p_{\max} e^{-\l t/2})}.
\end{multline}
where $\p_{\max}=\max\set{\p_v:v\in V}$.

{\bf Some explanation:} The first line is direct from \eqref{numpaths}.
Then $\frac{2\a}{2\om M}$ is asymptotic to the steady
state for $\x$ and there is a $\frac{1}{2\a}$ factor from \eqref{resist}.
So $\frac{\p_{\x}}{R_{\x}}$ is asymptotic to $\frac{2\a}{2\om M}\cdot\frac{1}{2\a} \cdot\frac{4}{4+L+\r}=\frac{1}{2\om M}\cdot\frac{4}{4+L+\r}$.

This is to be compared with the expression in \eqref{shed}.
Here we are summing our estimate for $\Pr(\cA_s(f))$
over edges $f$ of weight $\a$. Recall that $\cA_s(f)$ is the
event that we have not crossed edge $f$ in the time
interval $[\Tm,s]$.

Notice that the sum over $v\in V$ can be taken care of by the fact that we weight the
contributions involving $v$ by $\p_v$. Remember that here $v$
represents the vertex reached by $\sW$ at time
$\Tm$.

Ignoring a negligible term we have
\begin{align}
&\E(\Psi(E(G_0)),t))\nonumber\\
&\leb 3^{L_0}\om\xi M\sum_{s\geq t}\ell^*\int_LdL\int_{\r_1,\r_2}d_{\r_1}d_{\r_2} L
\exp\set{-(1+o(1))\brac{\frac{L+\r}{\om}+\frac{2s}{\om M(4+L+\r)}}}\nonumber\\
&\leb 3^{L_0}\om\xi M\ell^*\int_LdL\int_{\r_1,\r_2}d_{\r_1}d_{\r_2}L
\frac{\exp\set{-(1+o(1))\brac{\frac{L+\r}{\om}+\frac{2t}{\om M(L+\r)}}}}
{1-\exp\set{-\frac{2+o(1)}{\om M(L+\r)}}}.\label{tum}
\end{align}
Note now that in the current case, $\xi=o(1)$ and so
our estimate for $\Tc$ is $\sim C\om^{2}M\ln^2M$
where $C\geq 1/8$. So, the contribution from
$\ell,\r$ such that $L+\r\leq \om\ln M/100$ is negligible.
As are the contributions from $L+\r\geq 5\om\ln M$.

Ignoring negligible values we obtain a bound by further
replacing the denominator in \eqref{tum} by $\Omega(1/ \om^2M\ln M)$.
Thus,
\begin{align}
\E(\Psi(E(G_0),t))&\leb 3^{L_0}\om^3\xi \ell^*M^2\ln M
\int_{L\leq 5\om\ln M}\int_{\r\leq 5\om^2\ln M}
L\exp\set{-\frac{L+\r}{\om}-\frac{2t}{\om M(L+\r)}}\nonumber
\\
&\leb 3^{L_0}\om^4 M^2\ln M\times (\om\ln M)^3
\exp\set{-\sqrt{\frac{8t}{\om^2M}}}.\label{last}
\end{align}
Putting $t\sim\frac{1}{8}\om^2M\ln^2M$ we claim that the RHS of \eqref{last} is $o(t)$. Indeed, to see this note that $3^{L_0}\om^4 M^2\ln M\times (\om\ln M)^3=M^{2+\eta}$ for some $\eta=o(1)$, where $M^\eta\to\infty$. Therefore, if we take $t=\frac{1+3\eta}{8}\om^2M\ln^2M$ then the RHS of \eqref{last} is $\leb M^{2+\eta}\times M^{-(1+3\eta)^{1/2}}=o(M)$.

We now consider the contribution of $O(\Tm^2\pi_{\max}
e^{-\l \Tc/2})$ to $\E(\Psi(E(G_0),t))$.
We bound this by
$$\leb (\om^2\ln^5M)^2\times \frac{1}{\om M}\times
\exp\set{-\Omega\bfrac{\om^2M\ln^2M}{\om^2\ln^5M}}=o(1).$$
Summarising, if
\beq{sumTcov}
t\geq \frac{1+o(1)}{8}\om^2M\ln^2M
\eeq
then
$$\E(\Psi(E(G_0),t))=o(t)$$
and then the Markov inequality implies that w.h.p.
$$\Psi(E(G_0),t)=o(t).$$

\subsubsection{Estimating $R_P$}\label{Rv}
We first assume that $\La_f$ contains no vertices from $V_\s$.

We follow the argument in Section \ref{Rv0} down to
\eqref{Y10}, \eqref{Y20} which we replace by
\beq{Y1}
\wPr(\ell_1/\ell^*=\r)=\xi(1-\xi)^{\r\ell^*-1}
\eeq
and
\beq{Y2}
\wPr(\ell_1/\ell^*\geq\r)=(1-\xi)^{\r\ell^*}.
\eeq
Let the level of a tree like $T_1$ be the depth of
the tree in $K_F$ from which it is derived.
Let $R_k$ be the (random) resistance of a tree of level $k$.
Putting $R_0=0$ we get from \eqref{eq30}, \eqref{Y1} and \eqref{Y2} that
\beq{eq1eq1}
\wPr(R_1\geq \r )\leq 2(1-\xi)^{3\r\ell^*}.
\eeq
Assume next that for $a_k=(2.5)^k,\,k=o(\ln M)$ and for integer $1\leq \r\leq M^{o(1)}$,
\beq{eq4}
\wPr(R_k\geq \r)\leq a_k(1-\xi)^{2\r\ell^*}
\eeq
for $t\geq 1$. This is true for $k=1$ and $a_1=2+o(1)$.
Using \eqref{eq30} and arguing as in Section \ref{Rv0} we get
\begin{align}
\wPr(R_{k+1}\geq \r)
&\leq 2\brac{\sum_{s=1}^{2\r\ell^*-1}\wPr(\ell_1=s)\wPr(R_k\geq 2\r-s)+
\wPr(\ell_1\geq 2\r\ell^*)}\label{nn1}\\
&\leq 2\brac{\sum_{s=1}^{2\r\ell^*-1}\xi(1-\xi)^{s\ell^*-1}
\times a_k(1-\xi)^{2(2\r-s)\ell^*}+(1-\xi)^{2\r\ell^*}}\nonumber\\
&=2\brac{(1+o(1))a_k\xi(1-\xi)^{4\r\ell^*}
\sum_{s=1}^{2\r\ell^*-1}(1-\xi)^{-s\ell^*}+(1-\xi)^{2\r\ell^*}}\label{nn2}\\
&\leq (2+o(1))(a_k+1)(1-\xi)^{2\r\ell^*}.\nonumber\\
&\leq a_{k+1}(1-\xi)^{2\r\ell^*}.\nonumber
\end{align}
This verifies the inductive step for \eqref{eq4}
and \eqref{R1g} follows. Remember that $(1-\xi)^{2\r\ell^*}\leq e^{-2\r\ell^*\xi}=
e^{-2\r/\om}$.

For the non locally tree like case we now argue as in Section \ref{Rv0}
down to \eqref{zzxx} and obtain
\begin{align*}
\wPr(R\geq \r)&\leq 2\brac{\sum_{s=1}^{2\r\ell^*-1}(1-\xi)^{s\ell^*/2} (2.5)^k
(1-\xi)^{2(2\r-s)\ell^*}+(1-\xi)^{2\r\ell^*}}\\
&\leq2\brac{(2.5)^k(1-\xi)^{\r\ell^*}(\xi\ell^*)^{-1}+(1-\xi)^{2\r\ell^*}
}\\
&\leb \om (2.5)^{k+1}(1-\xi)^{\r\ell^*}.
\end{align*}
There is enough slack in \eqref{R1g} to absorb the $\om$ factor when $k=L_0$.

Now suppose that $\La_f$ contains vertices from $V_\s$.
When we encounter
a component $T$ of $V_\s\cap \La_f$
we replace it $\cN$ by edges $(v_0,v_i)$ (or $(v_0',v_i)$) and these
edges will have been given the same
resistance distribution as other edges of $\La_f$, conditioned
on being heavy.
This happens with
probability $1-o(1)$ and the net result is to replace the factor
2 in \eqref{nn1} by $2+o(1)$. This will not significantly affect the
rest of the calculation here.


\subsubsection{Estimating $R_{\x}''$}\label{frontier}

It follows from Lemma \ref{plm1} that
$$R_{\x}''\leq n^{-\d_0/6}(R_{\x}'+R_{\x}'')$$
and hence
\beq{summ}
R_{\x}''\leq n^{-\d_0/7}R_{\x}'.
\eeq


\subsubsection{Completing the proof of upper bound in Case~(c) of Theorem~\ref{th1}}\label{complete}
We are almost ready to apply \eqref{mm9}. We have estimated the cover time, but we have
ignored some of the time.
Specifically, let
$$E_1=\bigcup_{\substack{e\in E_K\\e\cap V_\s\neq\emptyset}}P_e.$$
We have not accounted for the time that $\CW{G_F}$ spends covering $E_1$.

For this we can apply a theorem of Gillman \cite{Gill}: Let $G=(V,E)$ be an edge
weighted graph and for $x\in V$ let $N_\bq=\card{\card{\frac{\bq}{\sqrt{\p}}}}_2$ where
$\p(x),x\in V$ is the steady state
distribution for the associated random walk and $q(x),x\in V$ is any initial distribution
for the starting point
of the walk.
Let $\th$ denote the spectral gap for the associated probability transition matrix.
\begin{theorem}\label{Gill}
Let $A\subseteq V$ and let $Z_t$ be the number of visits to $A$ in $t$ steps. Then, for
any $\g\geq 0$,
$$\Pr(Z_t-t\p(A)\geq \g)\leq (1+\g\th/10t)N_\bq e^{-\g^2\th/20t}.$$
\end{theorem}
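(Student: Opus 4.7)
The plan is to use the exponential-moment method adapted to Markov chains, reducing the tail bound to control of the top eigenvalue of a perturbed transition operator. Markov's inequality gives, for any $r>0$,
\[
\Pr_\bq(Z_t - t\p(A) \geq \g) \;\leq\; e^{-r(t\p(A)+\g)}\,\E_\bq[e^{rZ_t}],
\]
so the whole task reduces to a good bound on the moment generating function.

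Writing $Z_t=\sum_{s=0}^{t-1}\mathbf{1}[X_s\in A]$ and introducing the diagonal matrix $D$ with $D_{xx}=e^r$ on $A$ and $D_{xx}=1$ off $A$, unfolding the expectation as a matrix product gives $\E_\bq[e^{rZ_t}]=\langle\bq,D(PD)^{t-1}\mathbf{1}\rangle$. Because $P$ is reversible with respect to $\p$, the conjugate $\wt P=\Pi^{1/2}P\Pi^{-1/2}$ (with $\Pi=\mathrm{diag}(\p)$) is symmetric, hence so is $D^{1/2}\wt PD^{1/2}$. The latter is similar to $PD$, so $PD$ has real spectrum with a well-defined largest eigenvalue $\l_r$. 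A Cauchy--Schwarz estimate in $L^2(\p)$, using $\|\bq/\sqrt{\p}\|_2=N_\bq$, then yields
\[
\E_\bq[e^{rZ_t}] \;\leq\; N_\bq\cdot\l_r^{\,t}.
\]

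The crucial step is a sharp bound on $\l_r$ in terms of $\p(A)$, $\th$, and $r$. I would use the variational characterization $\l_r=\sup_{\|f\|_\p=1}\langle f,D^{1/2}\wt PD^{1/2}f\rangle_\p$ and decompose $f=\alpha\mathbf{1}+g$ with $g\perp\mathbf{1}$ in $L^2(\p)$ and $\alpha^2+\|g\|_\p^2=1$. Reversibility gives $\langle g,Pg\rangle_\p\leq(1-\th)\|g\|_\p^2$, while the cross terms involving $\alpha g$ are absorbed using Cauchy--Schwarz together with the spectral gap. Carrying out the second-order calculation yields, for $r$ small,
\[
\l_r \;\leq\; 1 + (e^r-1)\p(A) + \frac{(e^r-1)^2}{\th}
\;\leq\; \exp\!\Bigl(r\p(A) + \tfrac{r^2}{\th}\Bigr).
\]
Substituting into the Markov bound and choosing $r=\g\th/(ct)$ for an appropriate constant $c$ optimizes the exponent, while tracking the lower-order corrections produces the multiplicative prefactor $(1+\g\th/(10t))$ in front of $N_\bq\,e^{-\g^2\th/(20t)}$.

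The main obstacle is the eigenvalue bound in the previous paragraph: a careless variational estimate loses either the factor $\p(A)$ in the linear term (which is essential for centring at $t\p(A)$) or the spectral gap $\th$ in the quadratic term (which is essential for the exponent $\g^2\th/(20t)$). Capturing both simultaneously requires careful use of the fact that the unperturbed top eigenvector of $\wt P$ is $\mathbf{1}$ and that the perturbation $D-I$ is supported only on $A$, either via a Kato-style second-order perturbation expansion of the top eigenpair or via an explicit optimization over the two parameters $\alpha$ and $\|g\|_\p$ in the decomposition above.
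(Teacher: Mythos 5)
The paper does not prove Theorem~\ref{Gill}: it is stated as a black box, cited verbatim from Gillman \cite{Gill}, and applied once in Section~\ref{complete}. So there is no internal proof to compare your attempt against. That said, your sketch is a faithful high-level outline of the argument in Gillman's original paper: tilt the transition kernel by the diagonal $D$, use reversibility to symmetrize, bound $\E_\bq[e^{rZ_t}]$ by a power of the top eigenvalue $\l_r$ of the tilted operator with $N_\bq$ arising from a Cauchy--Schwarz step, and control $\l_r$ by a second-order perturbation estimate. Two quantitative caveats if you want to recover the specific constants the paper invokes. First, the Cauchy--Schwarz step actually yields $\E_\bq[e^{rZ_t}]\leq e^{r}N_\bq\,\l_r^{t-1}$ rather than $N_\bq\l_r^{t}$ (the two $D^{1/2}$ factors at the ends contribute $e^{r/2}$ each); this is harmless but needs tracking. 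Second, your stated bound $\l_r\leq 1+(e^r-1)\p(A)+(e^r-1)^2/\th$ is more optimistic than what the perturbation analysis actually produces: Gillman's bound carries a larger constant on the quadratic term and is valid only when $e^r-1$ is a sufficiently small multiple of $\th$, and it is precisely this looser estimate (combined with the choice of $r$) that generates the $10$ and $20$ in the final statement. Finally, your proposal to replace Gillman's Kato-style perturbation expansion of the top eigenpair by an explicit variational optimization over $f=\a\mathbf{1}+g$ with $g\perp\mathbf{1}$ is a legitimate alternative route (essentially what Lezaud later did, obtaining somewhat sharper constants); either path works, but to justify citing the theorem in exactly the form $(1+\g\th/10t)N_\bq e^{-\g^2\th/20t}$ you would need to carry the constants through carefully rather than leaving them implicit.
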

We apply this theorem to the random walk $\CW{G_F}$. Let $A=E_1$ and
$\g=M/\xi^2$.
It follows from Lemmas~\ref{lem2} (Part~(c)) and \ref{degVsigma} that w.h.p.
$$\p(A)=O\bfrac{\om^{-1/3}M\times \xi^{-1}\ln M}{M+\n_2}=O(\om^{-1/3}\ln M).$$
It follows from Lemma \ref{lem3} that $\th=
\Omega(\xi^2/\ln^2M)$. Now let $t=M\ln^2M/\xi^2$.
Then
with $\bq$ of the form $(0,0,\ldots,1,0,\ldots,0)$ we have
$$\Pr(Z_t\geq t\p(A)+\g)=O(m^{1/2}e^{-\Omega(M/\ln^4M)})=o(1).$$
This completes the proof of Case (c2).
\subsection{Case (c3): $\n_2\geq e^\om$}\label{casec3}
In this case we can
use the fact that w.h.p.
$\ell_e\in[\bm,\am]$ for $e\in E_K$ to (i) partition all induced paths of $G_F$ into
sub-paths of
length $\sim\m=me^{-\om/2}$, (ii) replace these sub-paths by edges to create a graph
$\G$ and then (iii) apply
the Case~(c) reasoning to $\G$ and then scale up by $\m^2$
to get the claimed upper bound.

The proof of the upper bound for Case~(c) of Theorem~\ref{th1} is now complete.

\subsection{Case~(b): $\n_2=M^\a,\,0<\a<1$}\label{caseb}
Our argument for this case will not be so detailed as for the previous cases.
It is closer in spirit to
that of the previous papers of the first two authors.

Note that in this case
$$1-\xi\leq \frac{1}{M^{1-\a}}.$$
So,
\begin{lemma}\label{largeste}
Let $\th>0$ be an arbitrarily small positive constant. Then
w.h.p.\ $\ell_e\leq \ell_\a=\rdup{1/(1-\a)+1+\th}$ for $e\in E(K_F)$.
\end{lemma}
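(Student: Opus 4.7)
The plan is a direct first-moment/union-bound argument, exactly in the spirit of Part~(c) of Lemma~\ref{lem2}, but now exploiting the much larger value of $\xi$ available in the regime $\n_2 = M^\a$ with $\a<1$.

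First I would control the tail of a single $\ell_e$. From the exact formula \eqref{nose} (equivalently, summing Part~(a) of Lemma~\ref{lem2} with $k=1$ over $z_1\ge \ell_\a$, after checking that the condition $k\z = o(M+\n_2)$ is trivially satisfied since $\ell_\a$ is a constant), one gets
\[
\Pr(\ell_e \geq \ell_\a) \;\leb\; (1-\xi)^{\ell_\a - 1} \;\leq\; M^{-(1-\a)(\ell_\a-1)},
\]
using the hypothesis $1-\xi \leq M^{-(1-\a)}$.

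Next I would take a union bound over the at most $M$ edges of $K_F$:
\[
\Pr\bigl(\exists\, e \in E(K_F):\; \ell_e \geq \ell_\a\bigr)
\;\leb\; M \cdot M^{-(1-\a)(\ell_\a - 1)}
\;=\; M^{\,1 - (1-\a)(\ell_\a - 1)}.
\]
Finally, the choice $\ell_\a = \rdup{1/(1-\a) + 1 + \th}$ guarantees $\ell_\a - 1 \geq 1/(1-\a) + \th$, whence
\[
1 - (1-\a)(\ell_\a - 1) \;\leq\; -(1-\a)\th \;<\; 0,
\]
so the bound above is $M^{-(1-\a)\th} = o(1)$, which proves the lemma.

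I do not expect any genuine obstacle here: the only subtle point is that Lemma~\ref{lem2}(a) carries a multiplicative error $(1+\e)$ with $\e = O(k\z/(\n_2+M)) = O(1/M^\a)$, which is harmless for a constant $\ell_\a$, and the conditioning $\cEm$ does not interfere since $\ell_\a \ll \am$. Everything else is simply a union bound.
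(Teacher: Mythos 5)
Your proposal is correct and follows essentially the same route as the paper: a union bound over the $M$ kernel edges combined with the tail estimate for a single $\ell_e$ coming from \eqref{nose}/\eqref{830}, together with $1-\xi\leq M^{-(1-\alpha)}$ and the choice of $\ell_\alpha$ to make the exponent negative. The only cosmetic difference is that you bound $\Pr(\ell_e\geq\ell_\alpha)$ in one shot while the paper leaves the sum over $s\geq\ell_\alpha$ explicit inside the union bound; these are the same computation.
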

\begin{proof}[\textbf{\emph{Proof}}]
Going back to \eqref{830} we have
\begin{equation*}
\Pr(\exists e:\ell_e\geq \ell_\a)\leq M\sum_{s\geq \ell_\a}
M^{-(1-\a)(s-1)}\brac{1+\frac{3}{M+M^\a}}^{s-1}
=o(1).\qedhere
\end{equation*}
\end{proof}

The next thing to observe in this case that there
will be very few vertices of degree two close to any vertex of $K_F$.
Suppose that $d_n=\D$. We choose $\d_0\leq 1/100$
such that $\D^{L_0}\leq M^{(1-\a)/2}$.
Let $E_{v,s}$ be the set of edges of $K_F$ that are
within distance $s$ of vertex $v\in V(K_F)$.
\begin{lemma}\label{sparse}
 W.h.p., for all $v\in V(K_F)$,
$$\sum_{e\in E_{v,L_0}}\ell_e\leq |E_{v,L_0}|+2\ell_\a.$$
\end{lemma}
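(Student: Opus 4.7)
Proof plan.

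The inequality to prove is equivalent to
\[
S_v:=\sum_{e\in E_{v,L_0}}(\ell_e-1)\;\le\;2\ell_\a
\]
for every $v\in V(K_F)$, recalling that $\ell_e-1$ is exactly the number of degree-two vertices placed on the kernel edge $e$. My plan is to bound $\Pr(S_v\ge 2\ell_\a+1)$ for each fixed $v$ by a Chernoff-type tail bound, and then take a union bound over the at most $N\le M$ choices of $v$.

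First I would apply Lemma~\ref{largeste} to condition, w.h.p., on the event that every kernel edge satisfies $\ell_e\le\ell_\a$. Writing $k:=|E_{v,L_0}|\le\Delta\cdot\Delta^{L_0}\le \Delta\cdot M^{(1-\a)/2}$ by the hypothesis $\Delta^{L_0}\le M^{(1-\a)/2}$, this gives $\z:=\sum_{e\in E_{v,L_0}}\ell_e\le k\ell_\a=M^{(1-\a)/2+o(1)}$, hence $k\z=O(M^{1-\a})=o(M+\n_2)$ and Lemma~\ref{lem2}(a) applies uniformly to every configuration of $(\ell_e:e\in E_{v,L_0})$ consistent with the conditioning. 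That lemma tells us the conditional joint law is dominated, up to a $(1+o(1))$ factor, by the product measure $\wPr$ under which the $W_e:=\ell_e-1$ are i.i.d.\ Geometric-$0(\xi)$, with $1-\xi=\Theta(M^{\a-1})$.

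Under $\wPr$, the sum $S_v$ is negative-binomial, and a standard Chernoff estimate (equivalently, the leading term of $\binom{s+k-1}{s}\xi^k(1-\xi)^s$) yields
\[
\wPr(S_v\ge s)\;\le\;O\!\bigl((k(1-\xi)/s)^s\bigr).
\]
Since $k(1-\xi)\le M^{-(1-\a)/2+o(1)}$, substituting $s=2\ell_\a+1$ and using $\ell_\a\ge 1/(1-\a)+1+\th$ gives the exponent $(2\ell_\a+1)(1-\a)/2\ge 1+(3+2\th)(1-\a)/2$, and hence
\[
\Pr\bigl(S_v\ge 2\ell_\a+1\bigr)\;\le\;M^{-1-(3+2\th)(1-\a)/2+o(1)}.
\]
A union bound over the $\le N\le M$ vertices $v$ gives total failure probability $M^{-(3+2\th)(1-\a)/2+o(1)}=o(1)$.

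The main subtlety is why a genuine tail bound is needed. A naive first-moment approach -- bounding the count $X_v:=\#\{e\in E_{v,L_0}:\ell_e\ge 2\}$ and applying a union bound -- only gives $\Pr(X_v\ge 3)=O(M^{-3(1-\a)/2})$, which after summing over $v$ suffices only when $\a<1/3$. For larger $\a$ the admissible count $\lfloor 2\ell_\a/(\ell_\a-1)\rfloor$ drops to $2$, so one cannot simply rule out ``three long edges''; instead one must exploit the fact that long edges are themselves typically short. The Chernoff bound captures precisely this, and the truncation $\ell_e\le\ell_\a$ from Lemma~\ref{largeste} is essential for keeping the error $\e=O(k\z/(M+\n_2))$ in Lemma~\ref{lem2}(a) uniformly $o(1)$ throughout the tail.
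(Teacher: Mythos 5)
Your proposal is correct and takes essentially the same route as the paper: after applying Lemma~\ref{lem2}(a) and Lemma~\ref{largeste}, both arguments reduce to a union bound over $v$ against a composition count $\binom{s+k-1}{k-1}\xi^k(1-\xi)^s$ — you phrase this as a negative-binomial Chernoff tail while the paper sums the composition count directly, but the two calculations are identical. The exponent bookkeeping ($k(1-\xi)=O(M^{-(1-\a)/2})$, $(2\ell_\a+1)(1-\a)/2>1$) matches the paper's final lines verbatim.
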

\begin{proof}[\textbf{\emph{Proof}}]
Let $ h_v=|E_{v,L_0}|\leq 2M^{(1-\a)/2}$.
Then we have
\begin{align*}
\Pr\brac{(\sum_{e\in E_{v,L_0}}\ell_e\geq h_v+2\ell_\a}&
\leq o(1)+\sum_{v\in V(K_F)}\sum_{s\geq h_v+2\ell_\a}
\sum_{\substack{z_e,e\in E_{v,L_0}
\\ \sum_ez_e=s}}
M^{-(1-\a)(s- h_v)}\brac{1+\frac{3}{M+M^\a}}^{s- h_v}\\
&\leq o(1)+M\sum_{s\geq  h_v+2\ell_\a}\binom{s-1}{ h_v-1}M^{-(s- h_v)(1-\a+o(1))}\\
&\leb o(1)+ M\sum_{s\geq  h_v+2\ell_\a}\brac{\frac{se}{s- h_v}\cdot \frac{1}{M^{1-\a+o(1)}}}
^{s- h_v}\\
& \leq o(1)+ M\sum_{s\geq h_v+2\ell_\a}\brac{\frac{eh_v}{2\ell_\a}\cdot \frac{1}{M^{1-\a+o(1)}}}^{s-h_v}\\
& \leq o(1)+ M\sum_{s\geq h_v+2\ell_\a}M^{-(s-h_v)(1-\a+o(1))/2}\\
&=o(1).\qedhere
\end{align*}
\end{proof}

It is not difficult to show that the conditions of Lemma \ref{MainLemma} hold w.h.p.
and so it is a matter of estimating
the $R_v$'s. This involves estimating the effective resistances $R'_v$ so that
we can use \eqref{resist0}. The inequalities
\begin{align*}
&1+\frac{1}{{\frac{1}{R-1}+\frac{1}{S}}}\geq \frac{1}{{\frac{1}{R}+\frac{1}{S}}}\\
&\frac{1}{R+1}+\frac{1}{S-1}\leq \frac{1}{R}+\frac{1}{S}\text{ for positive integers }R<S
\end{align*}
imply the following:
\begin{enumerate}[(i)]
\item If $v\in V_K$ and if we assume $k=O(1)$ vertices of degree
two within distance $L_0$ of $v$ then
we get the maximum effective resistance in \eqref{resist0}
by distributing these degree two vertices equitably on the edges incident
with $v$.
\item If $\deg(v)=2$ then we get the maximum resistance when $v$ is in the
middle of the path $P_e$ that it lies.
\end{enumerate}
There are now three cases to consider:
\begin{enumerate}[(1)]
\item If $k=0$ and $v$ is locally tree like, then the resistance satisfies
\beq{C1}
R_v'\leq \r_d=\frac{d-1}{d(d-2)},
\eeq
where $d$ is the minimum degree in
$K_F$. The value $\frac{d-1}{d(d-2)}$ is the resistance $R_{d,\infty}$ of an infinite
$d$-regular tree $T_\infty$.
Trimming the tree at depth $L_0$
explains the inequality. We obtain the resistance of $T_\infty$ by first computing the
resistance $\r$ of an
infinite tree with
branching factor $d-1$. This satisfies the recurrence $\frac{1}{\r}=\frac{d-1}{1+\r}$
giving $\r=\frac{1}{d-2}$.
The resistance $R_{d,\infty}$ then satisfies
$\frac{1}{R_{d,\infty}}=\frac{d}{1+\r}$, giving
$R_{d,\infty}=(1+\r)/d$.

If on the other hand,
$k=pd+q$ where $0\leq q<d$ then
\begin{align*}
\frac1{R_v'}&\ge \brac{\frac{d-q}{p+\frac{1}{d-2}}+\frac{q}{p+1+\frac{1}{d-2}}}\\
&=\frac{d}{p+1+\frac{1}{d-2}}+\frac{d-q}{\brac{p+
\frac{1}{d-2}}\brac{p+1+\frac{1}{d-2}}}\\
&=\frac{d}{\frac{k}{d}+\frac{1}{d-2}}+
\frac{d-q}{\brac{p+\frac{1}{d-2}}\brac{p+1+\frac{1}{d-2}}}
-\frac{d-q}{\brac{\frac{k}{d}+\frac{1}{d-2}}\brac{p+1+\frac{1}{d-2}}}\\
&\geq \frac{d}{\frac{k}{d}+\frac{1}{d-2}}.
\end{align*}
The case \eqref{C1} is equivalent to $p=q=0$.

Next observe that the number of vertices with this value of
$k$ is $O(M^{1-(1-\a)k})$ w.h.p.
Thus the main contribution from these vertices to $\Psi(V,t)$ can be bounded by
\bem{C1b}
\leb \sum_{s\geq t}M\exp\set{-(1+o(1))\frac{d}{2M}\cdot\frac{s}{d \r}}+\\
\sum_{s\geq t}\sum_{k\geq 1}M^{1-(1-\a)k}
\exp\set{-(1+o(1))\frac{s}{2M}\cdot\frac{d}{\frac{k}{d}+\frac{1}{d-2}}}
\end{multline}

\item If $v\in P_e$, $e$ is locally tree like and $v$ is the
middle of $k\geq 1$ vertices of degree two, then
\beq{C2}
\frac{1}{R_v'}\geq \brac{\frac{1}{\rdown{(k+1)/2}+\frac{1}{d-2}}+
\frac{1}{\rdup{(k+1)/2}+\frac{1}{d-2}}}.
\eeq
Observe that once again the number of vertices
with this value $k$ is $O(M^{1-(1-\a)k})$ w.h.p.
Thus the main contribution from these vertices to $\Psi(V,t)$ can be bounded by
\beq{C2a}
\leb \sum_{s\geq t}\sum_{k\geq 1}M^{1-(1-\a)k}\exp\set{-(1+o(1))\frac{s}{2M}
\brac{\frac{1}{\rdown{(k+1)/2}+\frac{1}{d-2}}+\frac{1}{\rdup{(k+1)/2}+\frac{1}{d-2}}}}
\eeq
Comparing \eqref{C1b} and \eqref{C2a} we see that the latter dominates,
 except possibly for the first term corresponding
to \eqref{C1}. As in \cite{ACF},
this first term forces $\Tc\geq (1+o(1))\frac{2\r_d}{d}M\ln M$.
The other terms in \eqref{C1b} force
$$\min_k\set{(1-\a)k\ln M+\frac{\Tc}{2M}\brac{\frac{1}{\rdown{(k+1)/2}+
\frac{1}{d-2}}+\frac{1}{\rdup{(k+1)/2}+\frac{1}{d-2}}}}
\geq (1+o(1))\ln M.$$

\item Non locally tree like edges and vertices:
This follows from two easily proven facts: (i) There are $M^{o(1)}$ such vertices
and edges, (ii) the resistance $R_v'$ in all such cases is $O(1/(1-\a))$. This means
that all such vertices will w.h.p.\ have been visited after $o(M\ln M)$ steps.
\end{enumerate}

This completes the upper bound for Case~(b) of Theorem~\ref{th1}.
\subsection{Case~(a): $\n_2=M^{o(1)}$}\label{casea}
This is essentially treated in \cite{ACF}. W.h.p.\ every $K_F$ neighbourhood up to depth
$L_0$ attracts at most one vertex of degree two when edges are split. Furthermore all
but an $M^{-(1-o(1)}$ fraction are free of vertices of degree two. It is easy therefore
to amend the proof in \cite{ACF} to handle this.
\section{Lower Bounds}\label{lowerbounds}
\subsection{Case~(a): $\n_2=M^{o(1)}$}\label{lowera}
This is essentially treated in \cite{ACF}.
\subsection{Case~(b): $\n_2=M^\a,\,0<\a<1$}\label{lowerb}
This can be treated via the second moment method as described in \cite{CFreg}.
We give a bare outline of the approach. Let
\[ \psi_{\a,d} = \max\left\{\frac{2(d-1)}{d(d-2)}\,, \phi_{a,d}\right\}\,,\]
set $t=(1-o(1))\psi_{\a,d}M\ln M$
and suppose for example that $\psi_{\a,d}=\frac{2(d-1)}{d(d-2)}$. This is true
for $\a$ small and $d$ large. We then let $S$ denote the set of vertices that
(i) are locally tree like, (ii) have no degree
two vertices added to their $L_0$-neighbourhood
and (iii) have only degree $d$ vertices in their $L_0$-neighbourhood.
We find that $|S|=\Omega(n^{1-o(1)})$ w.h.p.\ and we greedily choose a sub-set $S_1$ of $S$
so that (i) if $v,w\in S_1$ then $\mathrm{dist}(v,w)>2L_0$ and (ii) $|S_1|=n^{1-o(1)}$.
Let $S^*$ denote the set of vertices in $S_1$ that remain unvisited at time $t$.
We choose the $o(1)$ term in the definition of $t$ so that $\E(|S^*|)\to\infty$.
We will then argue that if $v,w\in S_1$ then
\beq{nearend}
\Pr(\cA_t(v)\cap \cA_t(w))\sim \Pr(\cA_t(v))\Pr(\cA_t(w)).
\eeq
This means, via the Chebyshev inequality,
that w.h.p.\ $S^*\neq \emptyset$, giving the
lower bound. To prove \eqref{nearend} we
consider a new graph $G'$ where we identify
$v,w$ to make a vertex $\Upsilon$ of degree
$2d$. We then apply Lemma \ref{MainLemma} to $G'$
to estimate $\Pr(\cA_t(\Upsilon))$. Observe
that up until the walk visits $\Upsilon$ in $G'$,
its moved can be coupled with moves in $G$.
Also, $\upsilon$ has steady state probability
approximately equal to that of $v,w$ combined, but $R_\Upsilon\sim R_v\sim R_w$ and
\eqref{nearend} follows.

\subsection{Case~(c): $\n_2=\Omega(M^{1-o(1)})$}\label{lowerc}
We use the following result of Matthews~\cite{Mat}. For any graph $G$
\[
\Tc(G) \ge \frac{1}{2} \max_{S \subset V} K_S \ln |S|,
\]
where
\[
K_S = \min_{u,v \in S} K(u,v).
\]
Here $K(u,v)$ is commute time between $u$ and $v$, i.e., the expected time
for a walk $\cW$ that starts at $u$ to visit $v$ and then return to $u$.
This in turn is given by
\[
K(u,v)=2 |E(G)| R_{\mathrm{eff}}(u,v),
\]
$E(G)$ is edges of $G$, and $R_{\mathrm{eff}}(u,v)$ is effective resistance between $u$ and $v$.

It is now simply a matter of finding a suitable set $S$.

Fix an integer $\ell$ and consider
$$S_\ell=\set{u:\exists e\in \tKer\text{ such that }u\text{ is the middle vertex of }P_e
\text{ and }\ell_e\geq \ell}.$$
Now
$$R_{\mathrm{eff}}(u,v)\geq \ell/2\text{ for }u,v\in S_\ell.$$
To see this, let $P_e$, $P_f$ be two paths of length (at least)
$\ell$ and let $a,b,c,d$ be their respective endpoints. Let $u,v$
be the midpoints of $P_e,P_f$. Let $V_{\l}$ be the set of vertices not on
$P_e$ or $P_f$. Contract the set $V_{\l}\cup \{a,b,c,d\}$ to a single
vertex $z$. This does not increase the effective
resistance between $u$ and $v$. What results is a
graph consisting of two cycles intersecting at $z$. The effective
resistance between $u$ and $v$ is now
at least $\ell/4+\ell/4=\ell/2$. Here $\ell/4$
is a lower bound on the resistance between $u$ and $z$ etc.

Now $m\geq \n_2$ and we will choose our $\ell$ to be $\ell_0=\frac{\ln M}{-2\ln(1-\xi)}$.
It follows from Lemma \ref{lem2} (Part~(b)) with $k=1$ that
$\E(|S_{\ell_0}|)\sim M(1-\xi)^{\ell_0}$.
Lemma \ref{lem2} (Part~(b)) with $k=2$ allows us to use the Chebyshev inequality
to show that $|S_{\ell_0}|\sim M(1-\xi)^{\ell_0}$ w.h.p.\ (Here we take $\z\leq 2\am$ so that
$\frac{\z}{\n_2+M}=O\bfrac{\ln^2M}{M}=o(1)$.)
Note that $M(1-\xi)^{\ell_0}=M^{1/2}\to\infty$.

Putting this altogether we see that w.h.p.
\beq{mm1}
\Tc(\tTC) \ge (1-o(1))\n_2\times \frac{\ln M}{-4\ln(1-\xi)}
\times \frac{\ln M}{2}.
\eeq
Since $-\ln(1-\xi)\sim\xi$ for small $\xi$, this also includes Case~(c).
This completes the proof of Case~(c) of Theorem~\ref{th1}.
\begin{Remark}
Our assumption, $-\ln(1-\xi)=o(\ln M)$ implies that
we can ignore the fact that $\ell_0$ is an integer.
That is, by
defining $\ell_0$ without $\rdup{\cdot}$ we can include the error in the $(1-o(1))$ factor.
\end{Remark}

\bigskip
{\bf Acknowledgment.}\, This work was initiated when A.F.\ was
visiting Microsoft Research, Redmond, and he would like to thank
the Theory Group at Microsoft Research for its hospitality and for
creating a stimulating research environment.
The authors are also grateful to the anonymous referees for useful comments and corrections.


\begin{thebibliography}{99}

\bibitem{ACF}   M. Abdullah, C. Cooper and A.M. Frieze,
{\em Cover time of a random graph with given degree sequence},
Discrete Mathematics 312 (2012) 3146--3163.

\bibitem{Aldous} D. Aldous, {\em An introduction to covering problems for random walks on graphs},  Journal of Theoretical Probability 2 (1989), 87–-89.

\bibitem{AKLLR} R. Aleliunas, R.M. Karp, R.J. Lipton, L. Lov\'asz and
C. Rackoff, {\em Random Walks, Universal Traversal Sequences, and the
Complexity of Maze Problems}, Proceedings of the 20th Annual
IEEE Symposium on Foundations of Computer Science (1979) 218--223.

\bibitem{BDNP} M. Barlow, J. Ding, A. Nachmias and Y. Peres,
{\em The evolution of the cover time},
Combinatorics, Probability and Computing 20 (2011), 331--345.

\bibitem{Boll1}
B. Bollob\'as, {\em A probabilistic proof of an asymptotic
formula for the number of labelled regular graphs}, European Journal
on Combinatorics 1 (1980) 311--316.

\bibitem{CFC} C. Cooper and A.M. Frieze, {\em The cover time of sparse random graphs},
  Random Structures and Algorithms 30 (2007) 1--16.

\bibitem{CFreg}  C. Cooper and A.M. Frieze, {\em The cover time of random
regular graphs}, SIAM Journal on Discrete Mathematics 18
(2005) 728--740.

\bibitem{CFweb} C. Cooper and A.M. Frieze, {\em The cover time of the
preferential attachment graph}, Journal of Combinatorial Theory
Series B  97(2) 269--290 (2007).

\bibitem{CFgiant}
C. Cooper and A.M. Frieze, {\em The cover time of the giant component of a random graph},
Random Structures and Algorithms 32 (2008) 401--439.

\bibitem{Dnp} C. Cooper and A.M. Frieze,
{\em Stationary distribution and cover time of random walks on random digraphs},
Journal of Combinatorial Theory B 102 (2012) 329--362.

\bibitem{CFgeo} C. Cooper and A.M. Frieze,
{\em The cover time of random geometric graphs},
Random Structures and Algorithms 38 (2011) 324--349.

\bibitem{Lub} J. Ding, J.H. Kim, E. Lubetzky and Y. Peres,
{\em Anatomy of a young giant component in the random graph},
 Random Structures and Algorithms 39 (2011) 139--178.

\bibitem{Lub2} J. Ding, E. Lubetzky and Y. Peres,
{\em Anatomy of the giant component: The strictly supercritical regime},
European Journal of Combinatorics 35 (2014), 155–-168.

\bibitem{DS}  P.G. Doyle and J.L. Snell, {\em Random Walks and Electrical Networks},
Carus Math. Monograph 22, AMA (1984).

\bibitem{ER} P. Erd\H{o}s and A. R\'enyi,
{\em On random graphs.\ I}, Publ. Math. Debrecen 6 (1959), 290-–297.

\bibitem{Feige1} U. Feige,
{\em A tight upper bound for the cover time of
random walks on graphs},  Random Structures and Algorithms 6
(1995) 51--54.

\bibitem{Feige2} U. Feige,
{\em A tight lower bound for the cover time of
random walks on graphs}, Random Structures and Algorithms 6
(1995) 433--438.

\bibitem{Fe} W. Feller, {\em An Introduction to Probability Theory, Volume I,}
(Second edition) Wiley (1960).

\bibitem{Gill} D. Gillman,
{\em A Chernoff bound for random walks on expander graphs},
SIAM Journal on Computing 27 (1998) 1203--1220.

\bibitem{Sin} M.R. Jerrum and A. Sinclair,
{\em The Markov chain Monte Carlo
method: an approach to approximate counting and integration},
Approximation Algorithms for NP-hard Problems.
(D. Hochbaum ed.) PWS (1996) 482--520.

\bibitem{LPW} D. Levin, Y. Peres and E. Wilmer,
{\em Markov Chains and Mixing Times},
AMS, 2008.

\bibitem{Luczak} T. {\L}uczak,
{\em Cycles in a random graph near the critical point},
 Random Structures and Algorithms 2 (1991), 421--439.

\bibitem{Mat}  P. Matthews,
{\em Covering problems for
Brownian motion on spheres},
 Annals of Probability 16 (1988) 189--199.

\bibitem{McW}
B. McKay and N. Wormald,
{\em Asymptotic enumeration by degree sequence
of graphs with degrees $o(n^{1/2})$}, Combinatorica 11 (1991) 369--382.

\end{thebibliography}
\end{document}